\documentclass[reqno]{amsart}
\usepackage{amssymb,amscd,verbatim, amsthm,graphicx, color}
\usepackage{fancybox}
\usepackage{longtable}

\newcommand \fk[1]{{{\mathfrak #1}}}
\newcommand \C[1]{{\mathcal #1}}

\newcommand \wti[1]{{\widetilde {#1}}}

\newcommand\fg{\mathfrak g}

\newcommand \bC{{\mathbb C}}

\newcommand \bH{{\mathbb H}}
\newcommand \bR{{\mathbb R}}
\newcommand \bZ{{\mathbb Z}}

\newcommand\CB{{\C B}}
\newcommand\CH{{\C H}}

\newcommand\CS{{\C S}}

\newcommand\cf{{\it cf.~ }}

\newcommand\ep{{\epsilon}}

\newcommand\om{{\omega}}
\newcommand\al{{\alpha}}

\newtheorem{theorem}{Theorem}[section]

\newtheorem{corollary}[theorem]{Corollary}
\newtheorem{lemma}[theorem]{Lemma}
\newtheorem{proposition}[theorem]{Proposition}

\theoremstyle{definition}
\newtheorem{definition}[theorem]{Definition}
\newtheorem{remark}[theorem]{Remark}
\newtheorem{example}[theorem]{Example}
\newtheorem{notation}[theorem]{Notation}

\newcommand\Hom{\operatorname{Hom}}
\newcommand\End{\operatorname{End}}
\newcommand\Ext{\operatorname{Ext}}
\newcommand\Ind{\operatorname{Ind}}
\newcommand\ind{\operatorname{ind}}
\newcommand\res{\operatorname{res}}
\newcommand\rank{\operatorname{rank}}

\newcommand\im{\operatorname{im}}
\newcommand\supp{\operatorname{supp}}
\newcommand\Sg{\operatorname{Sg}}
\newcommand\Det{\operatorname{det}}

\newcommand\triv{\mathsf{triv}}
\newcommand\sgn{\mathsf{sgn}}

\newcommand\gen{\mathsf{gen}}
\newcommand\ds{\mathsf{DS}}\newcommand\ET{\mathsf{ET}}

\newcommand\af{\mathsf{aff}}
\newcommand\Rep{\mathsf{Rep}}
\newcommand\fd{\mathsf{fd}}
\newcommand\el{\mathsf{ell}}
\newcommand\EP{\mathsf{EP}}
\newcommand\even{\mathsf{even}}
\newcommand\odd{\mathsf{odd}}
\newcommand\Irr{\mathsf{Irr}}
\newcommand\DS{\mathsf{DS}}
\newcommand\rr{\mathsf{r}}

\newcommand\Res{\mathsf{Res}}
\newcommand\lin{\mathsf{lin}}
\newcommand\cc{\mathsf{cc}}
\newcommand\disc{\mathsf{disc}}
\newcommand\red{\mathsf{red}}

\newcommand\Pin{\mathsf{Pin}}
\newcommand\Spec{\mathsf{Spec}}


\newlength{\tabwidth}
\newlength{\tabheight}
\setlength{\tabwidth}{2ex}
\setlength{\tabheight}{2ex}
\newlength{\tabrule}
\newlength{\tabwidthx}
\newlength{\tabheightx}

\def\gentabbox#1#2#3#4{\vbox to \tabheight{\setlength{\tabrule}{#3}%
  \setlength{\tabwidthx}{#1\tabwidth}\addtolength{\tabwidthx}{\tabrule}%

\setlength{\tabheightx}{#2\tabheight}\addtolength{\tabheightx}{-\tabheight}%
  \hbox to #1\tabwidth{%
    \hspace{-0.5\tabrule}\rule{\tabrule}{#2\tabheight}\hspace{-\tabrule}%
    \vbox to #2\tabheight{\hsize=\tabwidthx%
      \vspace{-0.5\tabrule}\hrule width\tabwidthx height\tabrule%
      \vspace{-0.5\tabrule}\vfil%
      \hbox to \tabwidthx{\hss#4\hss}%
        \vfil\vspace{-0.5\tabrule}%
      \hrule width\tabwidthx height\tabrule\vspace{-0.5\tabrule}}%
    \hspace{-\tabrule}\rule{\tabrule}{#2\tabheight}\hspace{-0.5\tabrule}}%
  \vspace{-\tabheightx}}}
\def\genblankbox#1#2{\vbox to \tabheight{\vfil\hbox to
#1\tabwidth{\hfil}}}
\def\tabbox#1#2#3{\gentabbox{#1}{#2}{0.4pt}{\strut #3}}

\catcode`\:=13 \catcode`\.=13 \catcode`\;=13 
\catcode`\>=13 \catcode`\^=13
\def:#1\\{\hbox{$#1$}}
\def.#1{\tabbox{1}{1}{$#1$}}
\def>#1{\tabbox{2}{1}{$#1$}}
\def^#1{\tabbox{1}{2}{$#1$}}
\def;{\genblankbox{1}{1}\relax}
\catcode`\:=12 \catcode`\.=12 \catcode`\;=12 
\catcode`\>=12 \catcode`\^=7

\newenvironment{tableau}{\bgroup\catcode`\:=13 \catcode`\.=13
  \catcode`\;=13 \catcode`\>=13 \catcode`\^=13
  \setlength{\tabheight}{3ex}\setlength{\tabwidth}{3ex}%
  \def\b##1##2##3{\gentabbox{##1}{##2}{1.2pt}{\vbox{##3}}}%
  \def\n##1##2##3{\gentabbox{##1}{##2}{0.4pt}{\vbox{##3}}}%
  \vbox\bgroup\offinterlineskip}{\egroup\egroup}


\numberwithin{equation}{subsection}

\begin{document}

\title[Algebraic and analytic Dirac induction]{Algebraic and analytic Dirac induction for graded affine Hecke algebras}

\author{Dan Ciubotaru}
        \address[D. Ciubotaru]{Department of Mathematics\\ University of
          Utah\\ Salt Lake City, UT 84112, USA}
        \email{ciubo@math.utah.edu}

\author{Eric M.~Opdam}
\address[E. Opdam]{Korteweg-de Vries Institute for Mathematics\\Universiteit van Amsterdam\\Science Park 904\\ 1098 XH Amsterdam, The Netherlands}
\email{e.m.opdam@uva.nl} 

\author{Peter E.~Trapa}
        \address[P. Trapa]{Department of Mathematics\\ University of
          Utah\\ Salt Lake City, UT 84112, USA}
        \email{ptrapa@math.utah.edu}

\thanks{The authors thank the Max Planck Institute for Mathematics in
  Bonn for the hospitality and support while part of this research was
  carried out, and  the organizers of  ``Analysis
  on Lie groups'' for their invitation to participate in the
  program. The authors also thank the referee of the paper for his careful reading of the manuscript and for many useful comments. This research was supported by  NSF-DMS 0968065 and NSA-AMS 081022 (D. Ciubotaru), ERC-advanced grant no. 268105 (E. Opdam), 
  NSF-DMS 0968275 and 0968060 (P. Trapa).
    }

\begin{abstract}

We define the algebraic Dirac induction map $\Ind_D$ for graded affine Hecke algebras.
The map $\Ind_D$ is a Hecke algebra analog of the explicit realization of the 
Baum-Connes assembly map in the $K$-theory of the reduced $C^*$-algebra of a 
real reductive group using Dirac operators.
The definition of $\Ind_D$ is uniform over the parameter space of the graded 
affine Hecke algebra. 
We show that the map $\Ind_D$ defines an isometric isomorphism 
from the space of elliptic characters of the Weyl group (relative to its reflection representation) 
to the space of elliptic characters of the graded affine Hecke algebra. 
We also study a related analytically defined global elliptic Dirac operator between unitary 
representations of the graded affine Hecke algebra which are realized in the spaces of sections of 
vector bundles associated to certain representations of the pin cover of the Weyl group.  
In this way we realize all irreducible discrete series modules of the
Hecke algebra in the kernels (and indices) of such analytic Dirac
operators.
This can be viewed as a graded affine Hecke algebra analogue of the construction of the discrete series representations of semisimple Lie groups due to Parthasarathy and Atiyah-Schmid.
\end{abstract}

\maketitle

\setcounter{tocdepth}{1}

\begin{small}
\tableofcontents
\end{small}

\section{Introduction}\label{s:1}

\subsection{}\label{s:1.1}
Graded affine Hecke algebras were
introduced independently by Lusztig \cite{L}, as a tool to study
representations of reductive $p$-adic groups and Iwahori-Hecke algebras, and by Drinfeld
\cite{Dr1,Dr2}, in connection with the
study of yangians, a certain class of quantum groups. 

In the seminal paper \cite{B}, Borel proved that the category of smooth (admissible) representations of a reductive $p$-adic group, which are generated by vectors fixed under an Iwahori subgroup, is naturally equivalent with the category of (finite-dimensional) modules for the Iwahori-Hecke algebra $\C H$, i.e., the convolution algebra of compactly supported complex functions on the group, which are left and right invariant under translations by elements in the Iwahori subgroup. The graded Hecke algebras $\bH$ (Definition
\ref{d:graded}) are certain degenerations of Iwahori-Hecke algebras, and one can recover much of the representation theory of $\C H$ from $\bH$. Moreover, via Borel's functor, basic questions in abstract harmonic analysis can be transfered from the group setting to the setting of Iwahori-Hecke algebras and graded Hecke algebras. For example, Barbasch and Moy \cite{BM} showed that the classification of unitary representations with Iwahori fixed vectors of a split reductive $p$-adic group can be reduced to the setting of $\C H$ and $\bH$. 

Motivated by the study of unitary representations, a Dirac operator and the notion of Dirac cohomology for $\bH$-modules were introduced and studied in \cite{BCT}. These constructions are analogues of the Dirac operator and Dirac cohomology for representations of real reductive groups. One of the main results concerning Dirac cohomology for $(\fg,K)$-modules, conjectured by Vogan and proved by Huang-Pand\v zi\'c \cite{HP}, says that, if nonzero, the Dirac cohomology of a $(\fg,K)$-module uniquely determines the infinitesimal character of the module. A graded Hecke algebra analogue is proved in \cite[Theorems 4.2 and 4.4]{BCT}. This Hecke algebra result will also be referred to as ``Vogan's conjecture'' in the sequel.   

\smallskip

In this paper we define the Dirac induction map $\Ind_D$ for $\mathbb{H}$. 
This map could be thought of as an algebraic analog of (the discrete 
part of) the explicit realization of the Baum-Connes assembly map 
$\mu: K^G_0(X)\to K_0(C_{\red}^*(G))$ (\cite{BC,BCH}) for the $K$-theory of the reduced 
$C^*$-algebra $C_{\red}^*(G)$ of a connected real reductive group $G$ using Dirac operators
(\cite{AS},\cite{P}; see \cite[Section 2.1]{La} for a concise account). 
In that context $X=G/K$ is the associated Riemannian symmetric space (which we assume to 
have a $G$-invariant spin structure) 
and $K^G_0(X)$ denotes the equivariant $K$-homology with $G$-compact supports.

Let $W$ be a real reflection group with reflection representation $V$. 
Let $k$ be a real valued $W$-invariant 
function on the set of simple reflections and let $\mathbb{H}$ be the 
associated graded affine Hecke algebra with parameters specialized at $k$ (Definition
\ref{d:graded}). 
In the present paper we describe the map $\Ind_D$ only on the discrete 
part of the equivariant $K$-homology with $G$-compact support of the group 
$G=V\rtimes W$ acting on $X=V$. After tensoring by $\mathbb{C}$ this space 
$K^G_{0,\disc}(X)$
can be identified with the complexified space $\overline{R}_{\mathbb{C}}(W)$ of virtual 
elliptic characters of $W$ (in the sense of Reeder \cite{R}, relative to the action of $W$ 
on $V$).  
This space comes equipped with a natural Hermitian inner product, the elliptic 
pairing. The map $\Ind_D$ which we construct is an isometric isomorphism 
$$\Ind_D:\overline{R}_{\mathbb{C}}(W)\to \overline{R}_{\mathbb{C}}(\mathbb{H})$$ 
where $\overline{R}_{\mathbb{C}}(\mathbb{H})$ is the complexified space of 
virtual elliptic characters of $\mathbb{H}$, equipped with its natural 
Euler-Poincar\'e pairing. 
The latter space can be identified canonically with the complexification of the discrete part 
of the Grothendieck group $K_0(\mathbb{H})$ of finitely generated projective modules 
over $\mathbb{H}$, elucidating the analogy with the (discrete part of) the assembly map 
as realized by Dirac operators.
We study the integrality properties of this map, and the central characters 
of $\Ind_D(\delta)$. An important role in this study is played by the Vogan conjecture 
as proved in \cite{BCT}, and the study of certain irreducible characters of the 
pin cover of $W$.

Next we consider a related analytically defined global elliptic Dirac operator between unitary 
representations of the graded affine Hecke algebra which are realized on the spaces of sections of 
certain vector bundles associated to a representation of the pin cover of the Weyl group.  
In this way we realize all irreducible discrete series modules of the Hecke algebra in the kernels (and indices) of such analytic Dirac operators. This can be viewed as a graded Hecke algebra analogue of the Parthasarathy and Atiyah-Schmid construction of discrete series representations for real semisimple Lie groups.

The results in this paper provide  direct links between three directions of 
research in the area of affine Hecke algebras:
\begin{enumerate}
\item the theory of the Dirac operator and Dirac cohomology, as defined for graded affine Hecke algebras in \cite{BCT,C,CT}; 
\item the Euler-Poincar\'e pairing and elliptic pairing of affine Hecke algebras and Weyl groups, as developed in \cite{OS1,R,S};
\item the harmonic analysis approach  of \cite{EOS,HO,O} to the study of unitary modules of graded Hecke algebras, particularly discrete series modules.
\end{enumerate}

We have used results on the existence and the central support of the discrete series modules of 
$\mathbb{H}$ from \cite{O}, \cite{O2}, \cite{OS1} in order to define the index of the 
global elliptic Dirac operators.
We hope to replace this by analytic results in the index theory of equivariant elliptic operators in 
a sequel to the present paper. This could hopefully also shed more light on formal degrees 
of discrete series representations of $\mathbb{H}$.

\subsection{}\label{s:1.2} Let us explain the main results of the
paper in more detail. Let $C(V)$ be the Clifford algebra defined with respect to $V$ and a $W$-invariant inner product $\langle~,~\rangle$ on $V$. Let $\wti W$ be the pin double cover of $W$, a subgroup of the group $\Pin(V).$ When $\dim V$ is odd, $C(V)$ has two nonisomorphic complex simple modules $S^+,S^-$ which remain irreducible when restricted to $\wti W$. When $\dim V$ is even, there is a single spin module $S$ of $C(V),$ whose restriction to the even part of $C(V)$ is a sum of two nonisomorphic simple modules $S^+,S^-.$ The modules $S^\pm$ are irreducible $\wti W'$-representations, where $\wti W'$ is the index two subgroup of $\wti W$ given by the kernel of the sign representation. In order to describe the results uniformly, denote $\wti W'=\wti W$, when $\dim V$ is odd. Let $W'$ be the image of $\wti W'$ in $W$ under projection.

In \cite{BCT}, an analogue of the classical Dirac element is defined: in our case, this is $\C D$, an element of $\bH\otimes C(V).$ For every finite-dimensional $\bH$-module $X$, left multiplication by $\C D$ gives rise to Dirac operators, which are $\wti W'$-invariant:
\begin{equation}
D^\pm:X\otimes S^\pm\to X\otimes S^\mp.
\end{equation}
The Dirac index of $X$ is the virtual $\wti W'$-module $I(X)=H_D^+-H_D^-$, where $H_D^\pm=\ker D^\pm/\ker D^\pm\cap\im D^\mp$ are the Dirac cohomology groups. A standard fact is that $I(X)=X\otimes (S^+-S^-)$, see Lemma \ref{l:CT}.

Let $R_\bC(\bH)$ be the complex Grothendieck group of $\bH$-modules, and let $\langle~,~\rangle^\EP_\bH$ denote the Euler-Poincar\'e pairing on $R_\bC(\bH)$ (section \ref{s:EP}). The radical of this form is spanned by parabolically induced modules, and let $\overline R_\bC(\bH)$ be the quotient by the radical, the space of (virtual) elliptic $\bH$-modules. 

The algebra $\bH$ contains a copy of the group algebra $\bC[W]$ of the
Weyl group as a subalgebra. The Grothendieck group $R_\bC(W)$ has an
elliptic pairing $\langle~,~\rangle^\el_W$ defined in \cite{R} whose
radical is spanned by induced representations (see section
\ref{s:Well}). Let $\overline R_\bC(W)$ be the quotient by the radical
of the form, the space of (virtual) elliptic $W$-representations. An
easy calculation noticed first in \cite{CT}, see Theorem \ref{t:CT},
shows that for every $\delta\in\overline R_\bZ(W)$, there exist
associate $\wti W'$-representations $\wti\delta^+$ and $\wti \delta^-$
such that $\langle \wti\delta^+,\wti\delta^-\rangle_{\wti W'}=0$ and 
\begin{equation}\label{e:delta+}
\delta\otimes (S^+-S^-)=\wti \delta^+-\wti\delta^-,\text{ and }\langle\delta,\delta\rangle^\el_W=\langle\wti\delta^+,\wti\delta^+\rangle_{\wti W'}.
\end{equation}

The relation between the elliptic theories of $\bH$ and $W$ is given by the restriction map. Precisely, combining results of \cite{OS,OS1,S} for the affine Hecke algebra and Lusztig's reduction theorems \cite{L}, one sees that the map
\begin{equation}
\rr: \overline R_\bC(\bH)\to \overline R_\bC(W),\quad [X]\mapsto [X|_W]
\end{equation}
is a linear isometry with respect to the pairing
$\langle~,~\rangle^\EP_\bH$ and $\langle~,~\rangle^\el_W.$ Our first
main result is the construction of an inverse $\Ind_D$ for the
isometry $\rr$, which we call the algebraic Dirac induction map. The
definition of $\Ind_D$ combines elements of K-theory with the Dirac
index of $\bH$-modules. Let $K_0(\bH)_\bC$ be the complex Grothendieck group of finitely generated projective $\bH$-modules and let $F_0H_0(\bH)$ be the subspace of projective modules with zero-dimensional support (Definition \ref{d:support}). The rank pairing (\ref{e:rkpair}) induces an injective linear map 
\begin{equation}
\Phi: F_0H_0(\bH)\to \overline R_\bC(\bH)
\end{equation}
with good properties with respect to the rank pairing and the Euler-Poincar\'e pairing (Lemma \ref{l:F0H0}). Then the map $\Ind_D$ is defined by extending linearly
\begin{equation}
\Ind_D(\delta)=\Phi([\bH\otimes_{W'}((\delta^+)^*\otimes (S^+-S^-))]),\quad \delta\in \overline R_\bZ(W).
\end{equation}
The following result is part of Theorem \ref{t:ind}.
\begin{theorem}
The map $\Ind_D$ is well-defined, and it is the inverse of the map $\rr$. Moreover,
\begin{equation}
\langle\Ind_D(\delta),X\rangle^\EP_\bH=\langle\delta,\rr(X)\rangle^\el_W,
\end{equation}
for every $\delta\in\overline R_\bC(W)$ and $X\in\overline R_\bC(\bH).$
\end{theorem}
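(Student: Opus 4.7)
The plan is to establish three claims in sequence---well-definedness of $\Ind_D$ on $\overline R_\bC(W)$, the adjunction identity, and then the inverse property as a consequence---using as main inputs the compatibility of $\Phi$ with the pairings (Lemma \ref{l:F0H0}), Vogan's conjecture from \cite{BCT}, and the Dirac-index identity of Lemma \ref{l:CT}.

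\emph{Well-definedness.} First I would observe that the virtual $\wti W'$-representation $(\delta^+)^*\otimes(S^+-S^-)$ descends to a virtual $W'$-representation: both $\delta^+$ and $S^\pm$ are genuine $\wti W'$-modules, so the central element of $\ker(\wti W'\to W')$ acts trivially on the tensor product. Inducing then yields a finitely generated virtual projective $\bH$-module
\[
P(\delta):=\bH\otimes_{W'}\bigl((\delta^+)^*\otimes(S^+-S^-)\bigr).
\]
Combining the Dirac-cohomological structure of $\delta^+$ provided by Theorem \ref{t:CT} with Vogan's conjecture, the composition factors of $P(\delta)$ have central characters confined to a finite set, so that $[P(\delta)]\in F_0H_0(\bH)$ and $\Phi$ is applicable. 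To see that the assignment $\delta\mapsto\Ind_D(\delta)$ descends from $R_\bC(W)$ to the elliptic quotient, I would verify that it vanishes on parabolically induced classes: for $\delta=\ind_{W_J}^W\sigma$ with $W_J\subsetneq W$ a proper standard parabolic, transitivity of induction together with the compatibility of \eqref{e:delta+} with the spin construction at $W_J$ exhibits $P(\delta)$ as an $\bH$-module parabolically induced from a proper Levi subalgebra, hence zero in $\overline R_\bC(\bH)$.

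\emph{Adjunction and inverse property.} By the defining compatibility of $\Phi$ with the pairings,
\[
\langle\Ind_D(\delta),X\rangle^\EP_\bH \;=\; \langle [P(\delta)],[X]\rangle_{\mathrm{rk}}.
\]
Frobenius reciprocity for $\bH$-induction from $\bC[W']$ identifies the right-hand side with the $\wti W'$-character pairing
\[
\langle (\delta^+)^*\otimes(S^+-S^-),\, X|_{W'}\rangle_{\wti W'},
\]
which upon rewriting via the Dirac-index identity $I(X)=X\otimes(S^+-S^-)$ and substituting $\delta\otimes(S^+-S^-)=\wti\delta^+-\wti\delta^-$ from \eqref{e:delta+} together with the orthogonality $\langle\wti\delta^+,\wti\delta^-\rangle_{\wti W'}=0$, reduces to $\langle\delta,X|_W\rangle^\el_W=\langle\delta,\rr(X)\rangle^\el_W$. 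Finally, since $\rr$ is already known to be an isometric isomorphism and both elliptic pairings are nondegenerate on their respective quotients, the adjunction identity forces $\rr\circ\Ind_D=\mathrm{id}$ on $\overline R_\bC(W)$ and $\Ind_D\circ\rr=\mathrm{id}$ on $\overline R_\bC(\bH)$.

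\emph{Main obstacle.} The hardest step is the verification that $[P(\delta)]$ lies in $F_0H_0(\bH)$ and that the construction descends to the elliptic quotient. Both rely essentially on Vogan's conjecture from \cite{BCT}---which constrains the central characters of $\bH$-modules with nonvanishing Dirac cohomology---together with an induction-in-stages argument that carefully tracks the spin and Clifford structures as one passes to parabolic subalgebras of $\bH$.
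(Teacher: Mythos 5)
Your proof follows the same skeleton as the paper's (rank pairing via $\Phi$, Frobenius reciprocity, the Dirac index identity of Lemma \ref{l:CT}, and Theorem \ref{t:CT}), but the well-definedness step takes a genuinely different route. You invoke Vogan's conjecture: if $[P(\delta),X]\ne 0$ for irreducible $X$, then a constituent of $\wti\delta^\pm$ lands in $H_D(X)$, so Corollary \ref{c:voganchar} pins the central character of $X$ to the finite set $\{\chi^{\wti\sigma}\}$. The paper's argument is more elementary: since $[P(\delta),X]=\langle\wti\delta^+,I(X)\rangle_{\wti W'}$ and $I$ annihilates parabolically induced classes by (\ref{e:vanishinduced}) (because $S^+-S^-$ is supported on the elliptic set), the functional $[P(\delta),\cdot]$ factors through $\overline R_\bC(\bH)$ and hence is supported on the finitely many central characters of elliptic tempered modules. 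Both routes are valid; the paper's avoids Vogan's conjecture at this step, while yours gives a sharper \emph{a priori} description of $\supp([P(\delta)])$.

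Three points to correct. (i) The assertion that ``the composition factors of $P(\delta)$ have central characters confined to a finite set'' is false: each $\bH\otimes_{W'}((\wti\delta^+)^*\otimes S^\pm)$ is a large projective module whose composition factors realize essentially every central character. What is finite is the rank-pairing support of the \emph{virtual difference}, and that finiteness hinges precisely on the cancellation built into $S^+-S^-$. (ii) The step ``descending from $R_\bC(W)$ to the elliptic quotient'' is vacuous: Definition \ref{d:indmap} defines $\Ind_D$ directly on $\overline R_\bZ(W)$ via $\wti\delta^\pm$, which by Theorem \ref{t:CT}(2) are attached to a class already in $\overline R_\bZ(W)$; there is no map on $R_\bC(W)$ to descend. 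The sketch you offer (exhibiting $P(\delta)$ as parabolically induced from $\bH_P$ via transitivity and a $W_J$-spin compatibility) is also not justified: the functors $\bH\otimes_{W'}(-)$ and $\bH\otimes_{\bH_P}(-)$ do not intertwine in this naive way. (iii) In the final isometry computation, the factor of $2$ needed to pass from $\tfrac12\langle i(\delta),I(X)\rangle_{\wti W'}$ to $\langle\wti\delta^+,I(X)\rangle_{\wti W'}$ comes from the involution $\Sg$ of (\ref{e:Sg}), an isometry sending $i(\delta)\mapsto -i(\delta)$ and $I(X)\mapsto -I(X)$, not merely from the orthogonality $\langle\wti\delta^+,\wti\delta^-\rangle_{\wti W'}=0$.
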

As a consequence, we find that whenever $\delta$ is a rational
multiple of a pure element in $\overline R_\bZ(W)$, see Definition
\ref{d:pure}, then $\Ind_D(\delta)$ is supported in a single central
character $\Lambda(\delta)$, see Corollaries \ref{c:indmap} and
\ref{c:puresupport}. We also prove in Theorem \ref{t:ccmap}, that the
central character $\Ind_D(\delta)$ depends linearly in the parameter
function $k$ of the Hecke algebra. 

In section \ref{s:ortho}, we study further the map $\Ind_D$ in the
case when the root system $R$ is irreducible, in particular, we
investigate  its behaviour with respect to the integral lattices
$\overline R_\bZ(W)$ and $\overline R_\bZ(\bH)$, see section
\ref{s:integral}. To this end, in Theorem \ref{t:Honbasis} we find appropriate orthogonal bases
consisting of pure elements for the lattices $\overline R_\bZ(\bH)$
and $\overline R_\bZ(W).$

\subsection{}\label{s:1.3}
The second part of the paper concerns a realization of discrete series $\bH$-modules in the index (hence kernel) of certain global Dirac operators. This can be regarded as a Hecke algebra analogue of the construction in \cite{AS}. One complication in our setting is that, while the discrete series and tempered spectra of the graded affine Hecke algebra $\bH$ are known \cite{L2,O}, there is no known abstract Plancherel formula for $\bH.$ To bypass this difficulty we construct directly certain analytic models $\C X'_\omega(E\otimes S^\pm)$, which are (pre)unitary left $\bH$-modules, for every $\wti W'$-representation $E$. It is easy to check that every irreducible $\bH$-submodule of $\C X'_\omega(E\otimes S^\pm)$ is necessarily a discrete series module (Lemma \ref{l:ds1}). 

These models are analogous with the spaces of sections of spinor bundles for Riemannian symmetric spaces from \cite{P,AS}. The construction that we use is an adaptation of the ones from \cite{HO} and \cite{EOS} (the latter being in the setting of the trigonometric Cherednik algebra). A new ingredient  is an $\bH$-invariant inner product, this is Theorem \ref{t:*inv}, a generalization of the unitary structure from \cite{HO}. Once these definitions are in place, we can consider global Dirac operators acting ``on the right'':
\begin{equation}
D_E^\pm:\C X'_\omega(E\otimes S^\pm)\to \C X'_\omega(E\otimes S^\mp).
\end{equation}
If $\lambda$ is a central character of $\bH$, restrict $D_E^\pm$ to
the subspace $\C X'_\omega(E\otimes S^\pm)_\lambda$ of $\C
X'_\omega(E\otimes S^\pm)$ on which the center $Z(\bH)$ acts via
$\lambda.$ Denote by $D_E^\pm(\lambda)$ the restricted Dirac
operators.
Define the global Dirac index to be the formal expression
\begin{equation}
I_E=\bigoplus_\lambda ~ (\ker D_E^+(\lambda)-\ker D_E^-(\lambda)),
\end{equation}
a virtual left $\bH$-module. We show in section \ref{s:ds} that this
sum is finite. 

We define an analytic Dirac induction map $\Ind_D^\omega:\overline R_\bC(W)\to \overline R_\bC(\bH)$ as follows. Let $\delta\in \overline R_\bZ(W)$ be given, and let $\delta^+$ be the $\wti W'$-representation in (\ref{e:delta+}). Set
\begin{equation}
\Ind_D^\omega(\delta)=I_{(\wti\delta^+)^*}.
\end{equation}
A consequence of Vogan's conjecture (Theorem \ref{t:vogan}) is that
the irreducible $\bH$-modules that occur in $\Ind_D^\omega(\delta)$
must have a prescribed central character $\Lambda(\delta)$, see
Theorem \ref{t:ccmap}. The main results about $\Ind_D^\omega$ can be
summarized as follows (see Theorem \ref{t:globalindex}).
\begin{theorem}
\begin{enumerate}
\item For every irreducible $\bH$-module $X$, $\Hom_\bH(X,\Ind_D^\omega(\delta))=0$ unless $X$ is a discrete series module with central character $\Lambda(\delta)$, in which case
\begin{equation}
\dim\Hom_\bH(X,\Ind_D^\omega(\delta))=\langle \rr(X),\delta\rangle^\el_W.
\end{equation} 
\item Let $X$ be an irreducible discrete series $\bH$-module. Then  $X\cong \Ind_D^\omega(\rr(X))$  as $\bH$-modules. In particular, the image of the map $\Ind_D^\omega$ is the span of the discrete spectrum of $\bH$.
\end{enumerate}
\end{theorem}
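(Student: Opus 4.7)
The strategy is to combine four ingredients: Lemma \ref{l:ds1} (which already identifies every candidate constituent of $\Ind_D^\omega(\delta)$ as a discrete series module), Vogan's conjecture (Theorem \ref{t:vogan}) together with Theorem \ref{t:ccmap} (which identify the central character), the unitary structure of Theorem \ref{t:*inv} (which converts the formal Dirac index into a genuine multiplicity difference between the $\pm$-kernels), and the Dirac index identity $I(X) = X \otimes (S^+ - S^-)$ of Lemma \ref{l:CT} (which reduces the multiplicity computation to a $\wti W'$-level problem).

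For part (1), I would first apply Lemma \ref{l:ds1}: every irreducible $\bH$-submodule of $\C X'_\omega((\wti\delta^+)^* \otimes S^\pm)$ is a discrete series module, so the same is true of any irreducible constituent of $\ker D^\pm_{(\wti\delta^+)^*}(\lambda)$ and hence of any $X$ admitting a nonzero map into $\Ind_D^\omega(\delta)$. When $X$ is a discrete series embedding into $\Ind_D^\omega(\delta)$, its image meets some $\ker D^\pm_{(\wti\delta^+)^*}(\lambda)$ nontrivially; since $X$ is unitary and $D^-$ is formally adjoint to $D^+$ under Theorem \ref{t:*inv}, this contributes genuinely to the Dirac cohomology $H_D^\pm(X)$. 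Vogan's conjecture (Theorem \ref{t:vogan}) then constrains the infinitesimal character of $X$, and by Theorem \ref{t:ccmap} this central character must equal $\Lambda(\delta)$. The multiplicity is then computed via a Frobenius-type reciprocity on the analytic model combined with the adjointness of $D^\pm$:
\[
\dim\Hom_\bH\bigl(X, \Ind_D^\omega(\delta)\bigr) = \dim\Hom_{\wti W'}\bigl(X\otimes S^+,(\wti\delta^+)^*\bigr) - \dim\Hom_{\wti W'}\bigl(X\otimes S^-,(\wti\delta^+)^*\bigr).
\]
Using $I(X) = X|_{\wti W'} \otimes (S^+ - S^-)$ (Lemma \ref{l:CT}), the relation (\ref{e:delta+}), and the orthogonality $\langle\wti\delta^+,\wti\delta^-\rangle_{\wti W'} = 0$, the right-hand side is rewritten as $\langle\rr(X),\delta\rangle^\el_W$, completing part (1).

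Part (2) follows formally: applying part (1) with $\delta = \rr(X)$ and using that $\rr$ is an isometry, any irreducible constituent $Y$ of $\Ind_D^\omega(\rr(X))$ is a discrete series with $\langle Y, X\rangle^\EP_\bH \neq 0$; since irreducible discrete series modules are orthonormal in the Euler-Poincar\'e pairing, only $Y = X$ can occur, and with multiplicity one. The statement about the image then follows by letting $X$ range over all irreducible discrete series. The main obstacle is making the multiplicity computation rigorous on the analytic model $\C X'_\omega$: one must establish a genuine Frobenius-type reciprocity identifying $\Hom_\bH(X, \C X'_\omega(E \otimes S^\pm))$ with the appropriate $\wti W'$-equivariant Hom-space, together with a spectral decomposition of the Dirac Laplacian on the $\Lambda(\delta)$-isotypic part ensuring that the $\pm$-kernels contribute orthogonally and that the index sum defining $I_E$ is finite. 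Both depend essentially on the analytic framework developed earlier in the paper, notably on the unitary structure of Theorem \ref{t:*inv} and the central-character data for discrete series from \cite{O,O2,OS1}.
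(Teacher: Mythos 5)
Your plan is correct and is essentially the paper's own argument: Lemma \ref{l:ds1} to force every constituent to be a discrete series, Vogan's conjecture (via $\Lambda(\delta)$) to pin down the central character and make the sum over $\lambda$ finite, Lemma \ref{l:CT} to convert the index into $X\otimes(S^+-S^-)$, and then $\wti W'$-level bookkeeping via Theorem \ref{t:CT}(2) and Corollary \ref{c:EPds} for part (2). The two steps you flag as ``the main obstacle''---a Frobenius-type reciprocity $\Hom_\bH(X,\C X'_\omega(M))\cong\Hom_{W'}(M^*,X^*)$ when $X$ is a discrete series, and the orthogonal splitting $\C X'_\omega(E\otimes S^\pm)_\lambda=\ker D^\pm_E(\lambda)\oplus\im D^\mp_E(\lambda)$ coming from the $*$-invariant inner product of Theorem \ref{t:*inv}---are exactly Lemma \ref{l:ds2} and Lemma \ref{l:analyticindex}, which the paper proves immediately before this theorem, so once you cite them your outline closes without further work.
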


\bigskip

\begin{notation}If $k$ is a unital commutative ring and $A$ is a $k$-associative algebra, denote by $\Rep(A)$ the category of (left) $A$-modules and by $\Rep_\fd(A)$ the category of finite-dimensional modules; if $G$ is a group, denote by $\Rep_k(G)$, the category of $k[G]$-modules. Let $R_K(A)$ and $R_K(G)$ denote the Grothendieck rings of $\Rep(A)$ and $\Rep_k(G)$, respectively, with coefficients in a ring $K$.
\end{notation}

\section{Preliminaries: Euler-Poincar\'e pairings}

\subsection{The root system}\label{s:roots} Fix a semisimple real root system
$\Phi=(V,R,V^\vee,R^\vee)$. In particular, $V$ and $V^\vee$ are finite-dimensional real vector spaces, $R\subset V\setminus\{0\}$ generates
$V$, $R^\vee\subset V^\vee\setminus\{0\}$ generates $V^\vee$ and there
is a perfect bilinear pairing 
$$(~,~): V\times V^\vee\to \bR,$$
which induces a bijection between $R$ and $R^\vee$ such that
$(\al,\al^\vee)=2$ for all $\al\in R.$ 
For every $\al\in R$, let $s_\al: V\to V$ denote the reflection about
the root $\al$ given by $s_\al(v)=v-(v,\al^\vee)\al$ for all $v\in V.$
We also identify $s_\al$ with the map $s_\al: V^\vee\to V^\vee,$
$s_\al(v')=v'-(\al,v')\al^\vee$. 

Let $W$ be the subgroup of $GL(V)$
generated by $\{s_\al:\al\in R\}$; we may also regard $W$ as a
subgroup of $GL(V^\vee).$ Fix a basis $F$ of $R$ and set $S=\{s_\al:
\al\in F\}.$ Then $(W,S)$ is a finite Coxeter group.

Let $Q=\bZ R\subset V$ denote the root lattice, $\C P\subset V$ the weight lattice,  and form the affine Weyl group
$W_\af=Q\rtimes W.$ Let $F_\af$ be the set of simple
affine roots, and $S_\af\supset S$ the corresponding set of simple affine
reflections. Let $\ell$ denote the length function of the Coxeter
group $(W_\af,S_\af).$

The complexified vector spaces will be denoted by $V_\bC$ and $V^\vee_\bC$.

\subsection{The affine Hecke algebra $\C H$}

Let $q:S_\af\to \bR_{>0}$ be a function such that $q_s=q_{s'}$
whenever $s,s'\in S_\af$ are $W_\af$-conjugate. For every $s\in
S_\af,$ let $q_s^{1/2}$ denote the positive square root of $q_s.$

\begin{definition}
The affine Hecke algebra $\CH=\CH(W_\af,q)$ is the unique associative
unital free $\bC$-algebra with basis $\{T_w: w\in W_\af\}$ subject to
the relations:
\begin{enumerate}
\item[(i)] $T_w T_{w'}=T_{w w'}$ if $\ell(w w')=\ell(w)+\ell(w')$;
\item[(ii)] $(T_s-q_s^{1/2})(T_s+q_s^{-1/2})=0,$ for all $s\in S_\af.$
\end{enumerate}
\end{definition}
A particular instance is when the parameters $q_s$ of $\CH$ are
specialized to $1$; then $\CH$ becomes $\bC[W_\af]$.

A result of Bernstein (\cf \cite[Proposition 3.11]{L}) says that the
center $Z(\CH)$ of $\C H$ is isomorphic with the algebra of $W$-invariant
complex functions on $Q$. In particular, the central characters, i.e.,
the homomorphisms $\chi^t: Z(\CH)\to \bC$ are parameterized by classes
$W t$ in $W\backslash T$, where $T$ is the complex torus
$\bC\otimes_\bZ \C P.$ We say that
the central character $\chi^t$ is real if $t\in \bR\otimes_\bZ \C P.$

Denote by $\Rep(\CH)_0$ and  $\Rep_\fd(\CH)_0$  the respective full subcategories of
$\CH$-modules with real central characters.
If $P\subset F$, let $W_P$ denote the subgroup of $W$ generated by
$\{s_\al:\al\in P\}.$ Let $\CH_P$ be the parabolic affine Hecke
algebra, see \cite[section 1.4]{S} for example.

Delorme-Opdam defined the Schwartz algebra $\C S$ of $\CH$ (\cite[section 2.8]{DO}) by
\begin{equation}
\C S=\{\sum_{w\in W_\af}c_w T_w: \text{ for all }n\in \mathbb N,
(1+\ell(w))^nc_w \text{ is bounded as a function in } w\in W_\af\}.
\end{equation}
Then $\C S$ is a nuclear Fr\'echet algebra (in the sense of \cite[Definition 6.6]{O}) with respect to the family
of norms 
$$p_n(\sum_{w\in W_\af}c_wT_w):=\sup\{(1+\ell(w))^n|c_w|: w\in W_\af\}.$$ 
An irreducible module $X\in \Rep_\bC(\CH)$ is called tempered if it
can be extended to an $\C S$-module. The irreducible summands in the
$\CH$-decomposition of $\C S$ are called discrete series modules. 
These definitions of tempered and discrete series modules agree with those
given by the Casselman criterion which we do not recall here, instead
we refer to \cite[section 2.7]{O} for the details. Let $\Rep(\C S)$ denote the
category of $\C S$-representations.

\subsection{The graded affine Hecke algebra $\bH$}

Let $\underline k=\{\underline k_\al:\al\in F\}$ be a set of indeterminates such that $\underline k_\al=\underline k_{\al'}$ whenever $\al,\al'$ are $W$-conjugate. Denote $A=\bC[\underline k]$.
Let $\bC[W]$ denote the group
algebra of $W$ and $S(V_\bC)$ the symmetric algebra over $V_\bC.$ The
group $W$ acts on $S(V_\bC)$ by extending the action on $V.$ For every
$\al\in F,$  denote the difference operator by
\begin{equation}\label{e:diffop}
\Delta: S(V_\bC)\to S(V_\bC),\quad
\Delta_\al(p)=\frac{p-s_\al(p)}{\al},\text{ for all }p\in S(V_\bC).
\end{equation}

\begin{definition}[\cite{L}]\label{d:graded}
The generic graded affine Hecke algebra $\bH_A=\bH(\Phi,F,\underline k)$  is the unique
associative unital $A$-algebra such that 
\begin{enumerate}
\item[(i)] $\bH_A=S(V_\bC)\otimes A[W]$ as a $(S(V_\bC),A[W])$-bimodule;
\item[(ii)] $s_\al\cdot p=s_\al(p)\cdot s_\al+\underline k_\al \Delta_\al(p),$
  for all $\al\in F$, $p\in S(V_\bC).$
\end{enumerate}

If $k: F\to \bR_{\ge 0}$ is a function such that $k_\al=k_{\al'}$ whenever
$\al,\al'\in F$ are $W$-conjugate, let $\bH$ (or $\bH_k$, when we wish
to emphasize the dependence on the parameter function $k$) be the
specialization of $\bH_A$ at $k,$ i.e., $\bH=\bC_k\otimes_A \bH_A$,
where $\bC_k$ is the $A$-module on which $\underline k$ acts by $k$.
\end{definition}

A result of Lusztig \cite[Proposition 4.5]{L} says that the center
$Z(\bH_A)$ of $\bH_A$ is $A\otimes_\bC S(V_\bC)^W.$ In particular, $Z(\bH)=S(V_\bC)^W$ and the central
characters $\chi^\lambda: Z(\bH)\to \bC$ are parameterized by classes
$W\lambda$ in $W\backslash V^\vee_\bC$. We say that the central character
$\chi^\lambda$ is real if $\lambda\in V^\vee.$

Denote by $\Rep(\bH)_0$ and $\Rep_\fd(\bH)_0$ the respective full subcategories of
$\bH$-modules with real central characters.
If $P\subset F$, let $\bH_P$ be the parabolic graded affine Hecke subalgebra, see \cite[section 1.4]{S} for example.

\begin{definition}An $\bH$-module $X$ is called tempered if every $S(V_\bC)$-weight
$\nu\in V_\bC^\vee$ of $X$ satisfies the Casselman criterion
\begin{equation}\label{bHtemp}
(\omega,\Re\nu)\le 0,\text{ for all fundamental weights }\omega\in \C P.
\end{equation}
The module $X$ is called a discrete series module if all the inequalities in
(\ref{bHtemp}) are strict. Denote by $\ds(\bH)$ the set of irreducible discrete series $\bH$-modules.
\end{definition}

A particular case of Lusztig's reduction theorems is the following.

\begin{theorem}
There is an equivalence of categories $$\eta:\Rep_\fd(\CH(W_\af,q))_0\to
\Rep_\fd(\bH_k)_0,$$ where the relation between the parameters $q$ and
$k$ is as in \cite[(26)]{OS1}. Moreover, $X\in \Rep_\fd(\CH(W_\af,q))_0$ is irreducible tempered
(resp. discrete series) module if and only if $\eta(X)\in \Rep_\fd(\bH_k)_0$ is irreducible
tempered (resp. discrete series) module.
\end{theorem}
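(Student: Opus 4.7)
The plan is to exploit Lusztig's localization/completion technique. The essential geometric observation is that the exponential map $\exp: V^\vee_\bC \to T = \bC\otimes_\bZ \C P$ restricts to a $W$-equivariant bijection $V^\vee \xrightarrow{\sim} \exp(V^\vee)$, which identifies the set of real central characters of $\bH_k$ (orbits $W\lambda$ with $\lambda\in V^\vee$) with the set of real central characters of $\CH(W_\af,q)$ (orbits $Wt$ with $t\in \exp(V^\vee)$). This strongly suggests building the equivalence $\eta$ one central character at a time and then assembling it.

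Fix $\lambda\in V^\vee$ and set $t=\exp(\lambda)\in T$. Let $\widehat{\CH}_{Wt}$ (resp.\ $\widehat{\bH_k}_{W\lambda}$) be the completion of $\CH$ (resp.\ $\bH_k$) with respect to the maximal ideal of $Z(\CH)\cong \bC[\C P]^W$ (resp.\ $Z(\bH_k)=S(V_\bC)^W$) corresponding to $Wt$ (resp.\ $W\lambda$). The core algebraic input, due to Lusztig, is an isomorphism $\widehat{\CH}_{Wt}\xrightarrow{\sim}\widehat{\bH_k}_{W\lambda}$ when the parameters $q$ and $k$ are calibrated as in \cite[(26)]{OS1}; this is obtained by matching the Bernstein presentation of $\CH$ with the Hecke-type presentation of $\bH_k$ on the ``integral'' root subsystem $R_t\subset R$ consisting of roots which are integral at $t$. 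A finite-dimensional $\CH$-module with generalized central character $\chi^t$ is the same as a finite-length $\widehat{\CH}_{Wt}$-module, and similarly for $\bH_k$ at $\chi^\lambda$, so the isomorphism of completions produces an equivalence between the block of $\Rep_\fd(\CH)_0$ at $Wt$ and the block of $\Rep_\fd(\bH_k)_0$ at $W\lambda$. Summing over real central characters, with the logarithm well-defined because $\exp$ is injective on $V^\vee$, yields the global equivalence $\eta$.

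For the second assertion I would trace the matching of weights through the equivalence. An $\CH$-module has its $\bC[\C P]$-weights in $T$, an $\bH_k$-module has its $S(V_\bC)$-weights in $V^\vee_\bC$, and the Lusztig isomorphism of completions sends a weight $\nu\in V^\vee_\bC$ of $\eta(X)$ to $\exp(\nu)\in T$ as a weight of $X$. The Casselman temperedness criterion for $\CH$-modules with real central character reads $|t(\omega)|\le 1$ for every fundamental weight $\omega\in \C P$; writing $t=\exp(\nu)$ this becomes $(\omega,\Re\nu)\le 0$, which is exactly the condition \eqref{bHtemp}. Strict inequalities correspond to discrete series on both sides. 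Irreducibility is preserved since the equivalence of completions is in particular a Morita equivalence at each block, and the same bijection applies to the submodule of $\C S$- (resp.\ Schwartz-completed $\bH_k$-) extendable modules.

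The technical heart (and main obstacle) is the comparison isomorphism $\widehat{\CH}_{Wt}\cong \widehat{\bH_k}_{W\lambda}$: one must verify, with careful attention to the parameter calibration \cite[(26)]{OS1}, that the Bernstein generators on the affine side match the elements of $V_\bC$ on the graded side modulo the completion ideal, and check $W$-equivariance of this matching. The real central character hypothesis simplifies the combinatorics considerably (no nontrivial roots of unity enter the stabilizer of $t$, so $W_t$ is generated by reflections in $R_t$), but bookkeeping of $R_t$ inside $R$ remains the delicate part of the argument and is precisely what dictates the relation between $k$ and $q$.
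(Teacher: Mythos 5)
The paper states this theorem without proof, presenting it as a special case of Lusztig's reduction theorems (reference [L1] in the bibliography) with the parameter normalization of Opdam--Solleveld ([OS2], eq.~(26)). Your outline --- localize at each real central character, invoke Lusztig's comparison isomorphism between the completion of $\CH$ at $Wt$ and the completion of $\bH_k$ at $W\lambda$ where $t=\exp(\lambda)$, check that the calibration $q\leftrightarrow k$ makes the Bernstein generators of $\CH$ match the elements of $V_\bC$ in $\bH_k$ modulo the completion ideal, and then track the Casselman criteria under $t=\exp(\nu)$ --- is a faithful account of how the cited result is actually proved. Two points worth flagging, both of which you touch on but could state more carefully: first, Lusztig's reduction is in general a two-step process, passing through an extended affine Hecke algebra attached to the stabilizer $W_t$ and the integral root subsystem $R_t\subset R$ before reaching a graded algebra, and for non-real $t$ this first step is nontrivial; for real $t=\exp(\lambda)$ the stabilizer is a reflection subgroup and the bookkeeping of $R_t$ is exactly what the parameter relation in [OS2, (26)] encodes, as you observe. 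Second, assembling the block-by-block equivalences into a single equivalence of $\Rep_\fd(\cdot)_0$ uses that every finite-dimensional module splits as a direct sum over generalized central characters, which is standard but should be said. With those caveats, your proposal is sound and mirrors the argument the paper cites.
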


\subsection{Euler-Poincar\' e pairing for the Weyl group}\label{s:Well}
If
$\sigma\in R_\bC(W),$ let $\chi_\sigma$ denote the character of $\sigma.$

\begin{definition}[{\cite[section 2.1]{R}}] For $\sigma,\mu\in R_\bC(W)$, the
  Euler-Poincar\'e pairing is the Hermitian form
$$\langle\sigma,\mu\rangle_W^\el=\frac 1{|W|}\sum_{w\in
    W}\overline{\chi_\sigma(w)}\chi_\mu(w) \Det_{V_\bC}(1-w).$$
An element $w\in W$ is called elliptic if $\Det_{V_\bC}(1-w)\neq 0.$ 
\end{definition}

For every parabolic subgroup $W_{P}$ of $W$ corresponding to 
$P\subset F$, let $\ind_{W_{P}}^W: \Rep_\bC(W_{P})\to \Rep_\bC(W)$
denote the induction functor. 

\begin{theorem}[{\cite[(2.1.1), (2.2.2)]{R}}] The radical of the form
  $\langle~,~\rangle_W^\el$ on $R_\bC(W)$ is $\sum_{P\subsetneq
    F}\ind_{W_{P}}^W(R_\bC(W_{P}))$. Moreover:  
\begin{enumerate}
\item[(i)]  $\langle~,~\rangle_W^\el$ induces a positive definite Hermitian
  form on $$\overline R_\bC(W)=R_\bC(W)/\sum_{P\subsetneq
    F}\ind_{W_{P}}^W(R_\bC(W_{P})).$$
\item[(ii)] the dimension of $\overline R_\bC(W)$ equals the number
  of elliptic conjugacy classes in $W$.
\end{enumerate}
\end{theorem}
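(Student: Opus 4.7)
The plan is as follows. First, observe that the pairing only sees elliptic elements: the factor $\Det_{V_\bC}(1-w)$ vanishes exactly when $w$ has a nonzero fixed vector in $V$, so the sum defining $\langle\sigma,\mu\rangle_W^\el$ collapses to a sum over elliptic $w$. Since $W$ acts orthogonally on a real space, the non-real eigenvalues of $w$ on $V_\bC$ occur in conjugate pairs $(e^{i\theta},e^{-i\theta})$ contributing $|1-e^{i\theta}|^2>0$, while the real eigenvalues must all equal $-1$ for $w$ elliptic (since $1$ is excluded), contributing $2$ each. Hence $\Det_{V_\bC}(1-w)>0$ for every elliptic $w$. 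It follows that $\langle\cdot,\cdot\rangle_W^\el$ is the pullback, along the map $R_\bC(W)\to\{\text{class functions on the elliptic locus}\}$ given by restriction of characters, of a positive definite Hermitian form. The radical of $\langle\cdot,\cdot\rangle_W^\el$ is therefore precisely $\{\sigma\in R_\bC(W):\chi_\sigma\text{ vanishes on elliptic elements}\}$, and part (i) is immediate from the positivity just established.

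Next, I would identify this radical with $\sum_{P\subsetneq F}\ind_{W_P}^W R_\bC(W_P)$. The inclusion ``$\supseteq$'' holds because $W_P$ with $P\subsetneq F$ pointwise fixes the nonzero subspace cut out by the simple roots in $P$, so every element of $W_P$ and every $W$-conjugate thereof is non-elliptic; consequently $\chi_{\ind_{W_P}^W\tau}$ is supported on non-elliptic classes. For the reverse inclusion, by Frobenius reciprocity the image of $\ind_{W_P}^W$ on class functions equals the orthogonal complement of $\ker(\res^W_{W_P})$ under the standard pairing on $W$, i.e., the space of class functions on $W$ supported on $\bigcup_{g\in W}gW_Pg^{-1}$. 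The crucial geometric input is the classical theorem identifying isotropy groups in finite real reflection groups as parabolic subgroups: the stabilizer in $W$ of any nonzero $v\in V$ is generated by the reflections it contains and is conjugate to some $W_P$ with $P\subsetneq F$. Hence the non-elliptic elements of $W$ are exactly $\bigcup_{P\subsetneq F,\,g\in W}gW_Pg^{-1}$, and summing the images of $\ind_{W_P}^W$ over $P\subsetneq F$ recovers every class function supported on the non-elliptic locus.

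Finally, (ii) follows from a dimension count: class functions are parameterized by conjugacy classes, and the radical is parameterized by non-elliptic classes, so
$$\dim\overline R_\bC(W)=|\Irr(W)|-\#\{\text{non-elliptic conjugacy classes}\}=\#\{\text{elliptic conjugacy classes}\}.$$
The main obstacle in this outline is the invocation of the theorem that point stabilizers in a finite real reflection group are parabolic subgroups; everything else reduces to Frobenius reciprocity, standard character theory on $W$, and the elementary positivity of the weights $\Det_{V_\bC}(1-w)$ on elliptic elements.
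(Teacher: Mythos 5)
Your proof is correct and follows essentially the same route as Reeder's cited argument: restrict to the elliptic locus using the positivity of the weights $\Det_{V_\bC}(1-w)$ on elliptic elements, identify the radical with class functions vanishing off the elliptic set, and invoke Steinberg's theorem that point stabilizers in a finite real reflection group are (conjugates of) standard parabolics to match the non-elliptic locus with $\bigcup_{P\subsetneq F}\bigcup_{g}gW_Pg^{-1}$ and hence the radical with $\sum_{P\subsetneq F}\ind_{W_P}^W R_\bC(W_P)$. The dimension count in (ii) then follows immediately.
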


\subsection{Euler-Poincar\' e pairing for $\C H$}\label{s:EP}

\begin{definition}[{\cite[(3.15), Theorem 3.5]{OS}}]
Define on $R_\bC(\CH)$ a Hermitian form
$\langle~,~\rangle^{\EP}_\CH$ by 
$$\langle X,Y\rangle^\EP_\CH=\sum_{i\ge
  0}(-1)^i\dim\Ext_\CH^i(X,Y),\text{ for all }X,Y\in R_\bC(\CH).$$
\end{definition}

Let $\ind_{\CH_P}^\CH:\Rep_\bC(\CH_P)\to \Rep_\bC(\CH)$ denote the
induction functor.
If $P\subsetneq F,$ one can see that the space $\ind_{\CH_P}^\CH(R_\bC(\CH_P))$ is in
the radical of $\langle~,\rangle^\EP_\CH$, \cite[Proposition 3.4]{OS} Therefore,
$\langle~,\rangle^\EP_\CH$ factors through 
$$\overline R_\bC(\CH)=R_\bC(\CH)/\sum_{P\subsetneq
    F}\ind_{\CH_{P}}^\CH(R_\bC(\CH_{P})).$$

One can define the pairing $\langle~,~\rangle^{\EP}_\C S$ in
$R_\bC(\C S)$ and consider the space of (virtual) elliptic tempered
modules $\overline R_\bC(\C S).$ As a consequence of the Langlands
classification, one sees easily that
$\overline R_\bC(\CH)=\overline R_\bC(\C S)$. The fact that this isomorphism is an isometry with respect to the Euler-Poincar\'e pairings follows from \cite[Corollary 3.7]{OS}:
\begin{equation}
\Ext^i_\CH(X,Y)\cong \Ext^i_\CS(X,Y),\text{ for all finite-dimensional tempered $\CH$-modules } X,Y.
\end{equation}

\begin{theorem}[{\cite[Theorem 3.8]{OS}}] If $X$ is an irreducible discrete series $\CH$-module and $Y$ is a finite-dimensional tempered $\CH$-module, then
\begin{equation}
\Ext^i_\CH(X,Y)=\Ext^i_\CS(X,Y)=0,\text{ for all }i>0.
\end{equation}
In particular, $\langle X,Y\rangle^\EP_\CH=1$ if $X\cong Y$ and $0$ otherwise.
\end{theorem}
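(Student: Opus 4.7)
\medskip

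\noindent\textbf{Proof proposal.} The first equality $\Ext^i_\CH(X,Y) \cong \Ext^i_\CS(X,Y)$ is already available: it is the Ext-comparison for tempered modules quoted from \cite[Corollary 3.7]{OS} immediately before the theorem, applied to the pair $(X,Y)$ which consists of finite-dimensional tempered $\CH$-modules. So the problem reduces to proving
$$\Ext^i_\CS(X,Y)=0\quad\text{for all } i>0.$$

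The plan is to show that any irreducible discrete series module $X$ is a \emph{projective} $\CS$-module; once this is established, all higher Ext out of $X$ vanish automatically, and the theorem follows. The projectivity should be extracted from the Plancherel decomposition of the Schwartz algebra due to Delorme--Opdam (as used throughout \cite{DO,O}). Concretely, the Plancherel isomorphism realises $\CS$ as a direct sum of blocks indexed by the connected components of the tempered spectrum of $\CH$; each block is (isomorphic to) the algebra of smooth sections of a bundle of matrix algebras over the corresponding component, with a prescribed invariance under the action of a groupoid of intertwiners. The discrete series correspond precisely to the $0$-dimensional strata of this spectrum (isolated real central characters on which $Z(\CH)$ acts by scalars and on which there is no continuous parameter of induction). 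Hence the $X$-block of $\CS$ is a finite-dimensional full matrix algebra $\End_\bC(X)$, and $X$ is a direct summand of $\CS$ viewed as a left $\CS$-module. This realises $X$ as a projective $\CS$-module, yielding $\Ext^i_\CS(X,Y)=0$ for all $i>0$ and every $\CS$-module $Y$.

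With the vanishing of higher Ext in hand, the Euler--Poincar\'e pairing collapses to
$$\langle X,Y\rangle^\EP_\CH=\dim\Hom_\CH(X,Y).$$
Since $X$ is irreducible, Schur's lemma and the complete reducibility of finite-dimensional tempered $\CH$-modules (a consequence of the same Plancherel decomposition: $\CS$ acts semisimply on its finite-dimensional tempered modules) show that $\dim\Hom_\CH(X,Y)$ equals the multiplicity of $X$ as a summand of $Y$; in particular this is $1$ when $X\cong Y$ is irreducible and $0$ when the irreducible tempered modules $X$ and $Y$ are inequivalent.

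The main obstacle in this approach is genuinely the Plancherel-theoretic input: one must pin down the $X$-block of $\CS$ and verify that it is the finite-dimensional matrix algebra $\End_\bC(X)$ with no residual infinitesimal parameters. This is a serious analytic statement about residues of Eisenstein integrals and the structure of the residual cosets, and the right reference is the discrete series contribution to the Delorme--Opdam Plancherel formula together with the analysis of residual cosets in \cite{O,O2,OS1}. Once this block decomposition is in place, the cohomological vanishing and the Schur-type computation are both formal.
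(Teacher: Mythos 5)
Your proposal is correct and reconstructs the actual Opdam--Solleveld argument; the paper under review only cites \cite[Theorem 3.8]{OS} and does not reprove it. The decisive step --- that the discrete-series block of the Delorme--Opdam Fourier isomorphism of $\CS$ is the finite-dimensional matrix algebra $\End_\bC(X)$, which sits inside $\CS$ as a two-sided ideal direct summand, so that $X$ is a direct summand of the free rank-one $\CS$-module and hence projective in the relevant bornological module category --- is exactly what underlies the vanishing of $\Ext^i_\CS(X,Y)$ in [OS], and the remaining Schur-lemma computation of $\langle X,Y\rangle^\EP_\CH$ is formal as you say.
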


Solleveld \cite[Theorem 4.4.2]{S}
defines a scaling map
\begin{equation}
\tau_0: R_\bC(\C S)\to R_\bC(W_\af),\quad
\tau_0(X)=\lim_{\ep\to 0} X|_{q_s\to q_s^\ep};
\end{equation}
this has the property that it factors through $\overline R_\bC(\C
S)=\overline R_\bC(\CH)\to \overline R_\bC(W_\af).$ The relation with
the elliptic theory of the Weyl group can be summarized in the
following statements which are a combination of \cite[Theorem 3.2,
Proposition 3.9]{OS} together with 
\cite[Corollary 7.4]{OS1} and \cite[Theorem 2.3.1]{S}.

\begin{theorem}[Opdam-Solleveld]
\begin{enumerate}
\item[(i)] The map $\tau_0$ defines a linear isomorphism $\overline R_\bC(\CH)\to \overline R_\bC(W_\af)$, $[X]\mapsto [\tau_0(X)].$
\item[(ii)] The isomorphism $\tau_0$ from (i) is an isometry with respect to $\langle~,~\rangle^\EP_\CH$ and $\langle~,~\rangle^\EP_{W_\af}$, respectively.
\item[(iii)] The map $\tau_{\el}: \overline R_\bC(\CH)_0\to \overline R_\bC(W),$ given by $[X]\mapsto [\tau_0(X)|_W]$ is a linear isomorphism and an isometry with respect to $\langle~,~\rangle^\EP_\CH$ and $\langle~,~\rangle^\el_W$.
\end{enumerate}
\end{theorem}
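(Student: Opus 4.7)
The plan is to establish the three parts by unpacking and combining the cited references.

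For (i), I would first recall that $\tau_0$ is defined on $R_\bC(\C S)$ via the continuous scaling path $q_s\mapsto q_s^\epsilon$, $\epsilon\to 0^+$, using the continuity of the Plancherel decomposition in the parameters from \cite{DO,O}; this yields a linear map $R_\bC(\C S)\to R_\bC(W_\af)$, as in \cite[Theorem 4.4.2]{S}. Next I would check that $\tau_0$ carries the radical of $\langle\cdot,\cdot\rangle^{\EP}_\CH$ into the radical of $\langle\cdot,\cdot\rangle^{\EP}_{W_\af}$; this follows from the commutativity of the scaling limit with parabolic induction $\ind_{\CH_P}^\CH$ for any proper $P\subsetneq F$. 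To see that the induced map on the elliptic quotients is bijective, I would invoke \cite[Theorem 3.2]{OS}, which provides matched bases on the two sides indexed by (parabolic subsystem, discrete series) data and shows that $\tau_0$ is triangular with respect to them.

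For (ii), the strategy is to reduce the statement to the Schwartz algebra using the already quoted identification $\Ext^i_\CH(X,Y)\cong\Ext^i_{\C S}(X,Y)$ for tempered $X,Y$ from \cite[Corollary 3.7]{OS}. Within the Schwartz algebra picture, the central input is \cite[Proposition 3.9]{OS}, which asserts that the Ext dimensions are constant along the scaling path because the associated family of nuclear Fr\'echet Schwartz algebras $\C S(\CH(q^\epsilon))$ has a uniformly deformable tempered spectrum. Taking the alternating sum in $i$ then yields the desired isometry.

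For (iii), the composition $\tau_\el=(\cdot|_W)\circ\tau_0$ is well-defined on $\overline R_\bC(\CH)_0$, and the content of \cite[Corollary 7.4]{OS1}, together with \cite[Theorem 2.3.1]{S}, is precisely that it yields a linear isomorphism $\overline R_\bC(\CH)_0\to\overline R_\bC(W)$, isometrically intertwining $\langle\cdot,\cdot\rangle^{\EP}_\CH$ with $\langle\cdot,\cdot\rangle^{\el}_W$. The essential geometric fact is that every elliptic $W_\af$-conjugacy class supporting a real central character meets $W$, so the Weyl integration formula reduces the $W_\af$-elliptic pairing on the real-central-character part to the $W$-elliptic pairing. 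The main obstacle is the initial analytic step of showing that the scaling path extends continuously to $\epsilon=0$ at the Schwartz-algebra level with Ext dimensions preserved, since this requires the full Plancherel decomposition and the residual coset structure developed in \cite{DO,O,O2}; once this analytic input is granted, the remaining arguments are the algebraic bookkeeping of parabolic chains, matched bases, and elliptic class representatives outlined above.
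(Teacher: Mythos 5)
The paper itself offers no proof of this theorem; it is stated verbatim as a consequence of \cite[Theorem 3.2, Proposition 3.9]{OS} together with \cite[Corollary 7.4]{OS1} and \cite[Theorem 2.3.1]{S}. Your proposal cites exactly the same references and unpacks how they combine, so it takes the same route as the paper; just be aware that your intermediate glosses (e.g.\ the ``matched bases'' reading of \cite[Theorem 3.2]{OS} and the ``elliptic $W_\af$-class meeting $W$'' heuristic for (iii)) are your own interpolation and are not claimed or verified in this paper, so they should be checked directly against the cited sources.
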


\subsection{Euler-Poincar\' e pairing for $\bH$}\label{s:EPH}
We need to translate the previous results into the setting of the graded Hecke algebra $\bH$. Define the pairing $\langle~,~\rangle^\EP_\bH$ on $R_\bC(\bH)$ and the space of virtual elliptic modules $\overline R_\bC(\bH)$ in the same way as for $\CH.$ Lusztig's reduction equivalence $\eta:\Rep_\fd(\CH)_0\to \Rep_\fd(\bH)_0$ induces a linear isomorphism $\overline\eta: \overline R_\bC(\CH)_0\to\overline R_\bC(\bH)_0$ which is an isometry with respect to the Euler-Poincar\'e pairings. In fact, $\overline R_\bC(\bH)_0=\overline R_\bC(\bH),$ i.e., all finite-dimensional elliptic $\bH$-modules have real central character. Comparing with the results recalled before, we arrive at the following corollaries.

\begin{corollary}\label{c:EPds}
If $X$ is an irreducible discrete series $\bH$-module, and $Y$ is a finite-dimensional tempered $\bH$-module, then $\langle X,Y\rangle^\EP_\bH=1$ if $X\cong Y$ and $0$ otherwise.
\end{corollary}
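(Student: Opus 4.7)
The plan is to transfer the parallel theorem for the affine Hecke algebra $\CH$ (the Opdam--Solleveld result quoted just above) to $\bH$ using Lusztig's reduction equivalence $\eta:\Rep_\fd(\CH)_0\to\Rep_\fd(\bH)_0$.

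First I would verify that both $X$ and $Y$ lie in the subcategory $\Rep_\fd(\bH)_0$, i.e.\ have real central character, so that $\eta^{-1}$ applies. For the irreducible discrete series module $X$, the strict Casselman inequalities $(\omega,\Re\nu)<0$ force every $S(V_\bC)$-weight $\nu$ to have real part in the open antidominant cone; since the weights of an irreducible $\bH$-module form a single $W$-orbit in $V^\vee_\bC$ and the real/imaginary parts are $W$-stable, the imaginary part must vanish, so $\nu\in V^\vee$ and the central character is real. For an irreducible tempered $\bH$-module the same conclusion is standard and is recorded in \cite{O}. Since $\langle X,\cdot\rangle^\EP_\bH$ is additive in the second argument, I may further reduce to the case that $Y$ is itself irreducible tempered.

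Second, I pull back via $\eta$. Put $X'=\eta^{-1}(X)$ and $Y'=\eta^{-1}(Y)$; by the reduction theorem recalled before Section \ref{s:Well}, $X'$ is an irreducible discrete series $\CH$-module and $Y'$ is an irreducible tempered $\CH$-module. Because $\eta$ is an equivalence of abelian categories, it induces isomorphisms $\Ext^i_\bH(X,Y)\cong\Ext^i_\CH(X',Y')$ for all $i\ge 0$, whence
\[\langle X,Y\rangle^\EP_\bH=\langle X',Y'\rangle^\EP_\CH.\]
The same equality is also subsumed by the isometry property of the induced map $\overline\eta$ noted above, so no additional verification is needed at the level of Grothendieck groups.

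Third, I invoke \cite[Theorem 3.8]{OS} as recalled: the right-hand side equals $1$ if $X'\cong Y'$ and $0$ otherwise. Since $\eta$ is an equivalence, $X'\cong Y'$ if and only if $X\cong Y$, which yields the corollary. The main---and only nontrivial---point is the verification that discrete series and tempered $\bH$-modules have real central character; once this is granted, the statement is a purely formal consequence of Lusztig's reduction and the corresponding result for $\CH$.
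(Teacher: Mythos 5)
Your overall strategy — transfer \cite[Theorem 3.8]{OS} through Lusztig's reduction equivalence $\eta$ — is exactly what the paper does (the paragraph introducing $\overline\eta$ in section \ref{s:EPH} is, in effect, the paper's ``proof'' of this corollary). However, your step of verifying that $X$ and $Y$ live in $\Rep_\fd(\bH)_0$ has real problems.

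\textbf{The discrete series argument is incorrect.} You claim that the $S(V_\bC)$-weights of an irreducible $\bH$-module ``form a single $W$-orbit'' and then conclude that the imaginary part must vanish. But the weights only \emph{lie in} a single $W$-orbit; they do not in general exhaust it. (For $R=A_1$, the Steinberg module is one-dimensional with the single weight $-k\alpha^\vee/2$; the full orbit $\{\pm k\alpha^\vee/2\}$ is not present.) In fact, if the weights \emph{did} exhaust the orbit, the strict Casselman inequalities applied to all $W$-conjugates would force $\Re\nu=0$, contradicting strict antidominance. So the logic collapses. The assertion that discrete series of $\bH$ (at real parameter $k$) have real central character is true, but it is a nontrivial fact about residual points and is not a formal consequence of the Casselman criterion alone.

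\textbf{The tempered claim is false as stated.} Irreducible tempered $\bH$-modules need not have real central character: unitary principal series with purely imaginary parameter are tempered and generically have non-real central character. You cannot simply apply $\eta^{-1}$ to such a $Y$. The fix is cheap but must be said: if $Y$ does not have real central character, then $X$ and $Y$ (or rather $X$ and every irreducible constituent of $Y$) have distinct central characters, so $\Ext^i_\bH(X,Y)=0$ for all $i$ and the corollary holds trivially in this case. Only after this reduction may one assume $Y\in\Rep_\fd(\bH)_0$ and run the $\eta$ argument. With these two repairs — citing the real-central-character fact for discrete series rather than ``proving'' it via the flawed orbit argument, and treating the non-real-central-character tempered $Y$ separately — your proof becomes a correct and slightly more explicit version of the paper's argument.
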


\begin{corollary}\label{c:resmap}
The restriction map $\res:\Rep_\bC(\bH)\to \Rep_\bC(W)$, $\res(X)=X|_W$ induces a linear isometry 
\begin{equation}\label{e:rmap}
\rr: \overline R_\bC(\bH)\to \overline R_\bC(W)
\end{equation}
with respect to $\langle~,~\rangle^\EP_\bH$ and $\langle~,~\rangle^\el_W,$ respectively. In particular, if $X$ is an irreducible  discrete series $\bH$-module, and $Y$ is a finite-dimensional tempered $\bH$-module, $\langle \rr(X), \rr(Y)\rangle^\el_W=1$ if $X\cong Y$ and $0$ otherwise.
\end{corollary}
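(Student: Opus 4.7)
The strategy splits into three steps: verify that $\res$ descends to a well-defined map $\rr$ on the elliptic quotients, identify $\rr$ as a composition of the two isometries $\overline\eta$ and $\tau_\el$ already recorded in the preceding subsections, and then deduce the ``in particular'' clause from Corollary \ref{c:EPds}.

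\emph{Step 1: Well-definedness.} Since the radical of $\langle~,~\rangle^\EP_\bH$ is the span of parabolically induced modules, it suffices to check that for every $P\subsetneq F$ and every $\bH_P$-module $N$, one has
\begin{equation*}
\ind_{\bH_P}^\bH(N)\big|_W \;\cong\; \ind_{W_P}^W\bigl(N\big|_{W_P}\bigr)
\end{equation*}
as $W$-representations; the right-hand side lies in the radical of $\langle~,~\rangle^\el_W$. This identification is a direct consequence of the PBW description of $\bH$ in Definition \ref{d:graded}, together with a choice of minimal-length coset representatives for $W/W_P$, which realize $\bH$ as $\bC[W]\otimes_{\bC[W_P]}\bH_P$ as a $(\bC[W],\bH_P)$-bimodule. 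Once this bimodule decomposition is in hand, $\bH\otimes_{\bH_P}N$ restricted to $W$ is exactly $\bC[W]\otimes_{\bC[W_P]}N$.

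\emph{Step 2: factoring through the known isometries.} Lusztig's reduction produces the isometry $\overline\eta\colon\overline R_\bC(\CH)_0\xrightarrow{\sim}\overline R_\bC(\bH)$ recorded in Section \ref{s:EPH}, and the Opdam-Solleveld theorem part (iii) provides the isometry $\tau_\el\colon\overline R_\bC(\CH)_0\xrightarrow{\sim}\overline R_\bC(W)$, $[X]\mapsto [\tau_0(X)|_W]$. I claim $\rr\circ\overline\eta=\tau_\el$. Both Lusztig's reduction $\eta$ (localization of $\CH$ at a real central character followed by the formal exponential identification of generators) and Solleveld's scaling $\tau_0\colon q_s\to 1$ preserve the underlying vector space and the embedded copy of $\bC[W]$; hence $\eta(X)|_W$ and $\tau_0(X)|_W$ coincide as $W$-representations for every $X\in\Rep_\fd(\CH)_0$. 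Combining this identity with the fact that $\overline R_\bC(\bH)_0=\overline R_\bC(\bH)$ (all elliptic modules have real central character), the equality $\rr=\tau_\el\circ\overline\eta^{-1}$ exhibits $\rr$ as a composition of two isometries, hence an isometry (and an isomorphism).

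\emph{Step 3: the discrete series statement.} Because $\rr$ is an isometry, $\langle\rr(X),\rr(Y)\rangle^\el_W=\langle X,Y\rangle^\EP_\bH$ for all $X,Y$. When $X$ is an irreducible discrete series module and $Y$ is finite-dimensional tempered, Corollary \ref{c:EPds} evaluates the right-hand side as $1$ if $X\cong Y$ and $0$ otherwise, and transport along $\rr$ gives the claim. (If $X\not\cong Y$ then $[X]\neq [Y]$ in $\overline R_\bC(\bH)$, since a discrete series class is never a combination of proper parabolic inductions of tempered modules.)

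\emph{Main obstacle.} The only genuinely nontrivial point is the compatibility $\rr\circ\overline\eta=\tau_\el$ in Step 2: it requires tracking carefully that the $W$-structure is preserved both under Lusztig's reduction and under the analytic scaling limit, and that the two identifications of the $W$-restriction agree on the elliptic quotient. The other two steps are essentially formal once this compatibility is established.
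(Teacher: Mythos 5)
Your reconstruction matches the paper's intended (but unwritten) argument: the text says only ``Comparing with the results recalled before, we arrive at the following corollaries,'' and the factorization $\rr=\tau_\el\circ\overline\eta^{-1}$, together with the compatibility of restriction with parabolic induction and Corollary \ref{c:EPds}, is precisely the comparison meant. The one soft spot is the assertion in Step~2 that $\eta(X)|_W\cong\tau_0(X)|_W$ for all $X\in\Rep_\fd(\CH)_0$; this is true (both Lusztig's reduction and Solleveld's scaling limit preserve the $W$-type detected via the embedded finite Hecke algebra and the Tits deformation), but it deserves an explicit citation to \cite{L} and \cite{OS1,S} rather than the ``one traces through'' level of detail you give.
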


\section{Preliminaries: Dirac operators for the graded affine Hecke algebra}

We retain the notation from the previous section. We recall the construction and basic facts about the Dirac operator for $\bH$.

\subsection{The pin cover of the Weyl group} Fix a $W$-invariant inner product $\langle~,~\rangle$ on $V$ and let $C(V)$ denote the Clifford algebra, the quotient of the tensor algebra of $V$ by the ideal generated by $\{\om\otimes\om'+\om'\otimes\om+2\langle\om,\om'\rangle:\om,\om'\in V\}.$ Let $O(V)$ denote the group of orthogonal transformations of $V$ with respect to $\langle~,~\rangle.$ We have $W\subset O(V).$ The action of $-1\in O(V)$ on $C(V)$ induces a $\bZ/2\bZ$-grading $C(V)=C(V)_{\mathsf{even}}+C(V)_{\mathsf{odd}}$, and let $\ep$ be the automorphism of $C(V)$ which is $1$ on $C(V)_{\mathsf{even}}$ and $-1$ on $C(V)_{\mathsf{odd}}.$ Let $^t$ be the transpose automorphism of $C(V)$ defined by $\om^t=-\om$, $\om\in V$, and $(ab)^t=b^ta^t$ for $a,b\in C(V).$ The pin group is
\begin{equation}
\Pin(V)=\{a\in C(V)^\times: \ep(a)Va^{-1}\subset V,\ a^t=a^{-1}\};
\end{equation}
it is a central $\bZ/2\bZ$-extension of $O(V)$:
$$1\to\{\pm 1\}\to\Pin(V)\overset{p}\to O(V)\to1,$$
where $p$ is the projection $p(a)(\om)=\ep(a)\om a^{-1}.$ Construct the central $\bZ/2\bZ$-extension $\wti W=p^{-1}(W)$ of $W$:
\begin{equation}
1\to\{\pm 1\}\to\wti W\overset{p}\to W\to1.
\end{equation}
The group $\wti W$ has a Coxeter presentation similar to that of $W$, see \cite{M}:
\begin{equation}\label{e:coxeterWtil}
\wti W=\langle z, \wti s_\al, \al\in F: z^2=1,\ (\wti s_\al\wti s_\beta)^{m(\al,\beta)}=z, ~\al,\beta\in F\rangle.
\end{equation}
With this presentation, the embedding of $\wti W$ in $\Pin(V)$ is given by:
\begin{equation}\label{e:embedWtil}
z\mapsto -1,\quad \wti s_\al\mapsto \frac 1{|\al|}\al.
\end{equation}

\subsection{The Dirac element}\label{s:3.2}
The generic Hecke algebra $\bH_A$ (Definition \ref{d:graded}) has a natural $*$-operation coming from the relation with the affine Hecke algebra $\C H$ and $p$-adic groups. On generators, this is defined by
\begin{equation}
\begin{aligned}
&\underline k_\al^*=\underline k_\al,\ \al\in F;\quad w^*=w^{-1},\ w\in W;\\
&\xi^*=-w_0\cdot w_0(\xi)\cdot w_0=-\xi+\sum_{\beta\in R^+}\underline k_\beta (\xi,\beta^\vee) s_\beta,\ \xi\in V.
\end{aligned}
\end{equation}

For every $\xi\in V$, define
\begin{equation}\label{e:Txi}
\wti\xi=\xi-T_\xi,\text{ where }T_\xi=\frac 12\sum_{\beta\in R^+}\underline k_\beta (\xi,\beta^\vee) s_\beta\in \bH_A.
\end{equation}
Then $\wti\xi^*=-\wti\xi,$ for all $\xi\in V.$

\begin{definition}\label{d:diracelem}
Let $\{\xi_i\},\{\xi^i\}$ be dual bases of $V$ with respect to $\langle~,~\rangle$. The Dirac element is
$$\C D=\sum_i\wti\xi_i\otimes\xi^i\in\bH_A\otimes C(V).$$
It does not depend on the choice of bases. 
\end{definition}
Write $\rho$ for the diagonal embedding of $\bC[\wti W]$ into $\bH_A\otimes C(V)$ defined by extending linearly $\rho(\wti w)=p(\wti w)\otimes \wti w.$ By \cite[Lemma 3.4]{BCT}, we see that $\C D$ is $\sgn$ $\wti W$-invariant, i.e.,
\begin{equation}\label{e:sgninv}
\rho(\wti w)\C D=\sgn(\wti w) \C D \rho(\wti w),\text{ for all }\wti w\in\wti W.
\end{equation}
Moreover, \cite[Theorem 3.5]{BCT} computes $\C D^2$, and in particular shows that $\C D^2$ acts diagonally on $\wti W$-isotypic components of $X\otimes S$, for every irreducible $\bH$-module $X$ and every $C(V)$-module $S$. More precisely, define
\begin{equation}\label{e:omega}
\Omega=\sum_i\xi_i\xi^i\in Z(\bH_A),
\end{equation}
and
\begin{equation}\label{e:omegaWtil}
\Omega_{\wti W}=\frac z 4\sum_{\al>0,\beta>0,s_\al(\beta)<0}\underline k_\al \underline k_\beta |\al^\vee||\beta^\vee|\wti s_\al \wti s_\beta\in \bC[\wti W]^{\wti W}.
\end{equation}
Then, we have
\begin{equation}\label{e:Dsquare}
\C D^2=-\Omega\otimes 1+\rho(\Omega_{\wti W}).
\end{equation}

\subsection{Vogan's conjecture} 
In this section, we sharpen the statement of Vogan's conjecture (proved in \cite{BCT}) related to the Dirac cohomology of $\bH$-modules. The following result is the generic analogue of \cite[Theorem 4.2]{BCT}.

\begin{theorem}\label{t:vogan} Let $\bH_A$ be the generic graded Hecke algebra over $A=\bC[\underline k]$ (see Definition \ref{d:graded}). Let $z\in Z(\bH_A)$ be given. Then there exist $a\in\bH_A\otimes C(V)_\odd$ and a unique element $\zeta(z)$ in the center of $A[\wti W]$ such that 
\begin{equation}\label{e:defzeta}
z\otimes 1=\rho(\zeta(z))+\C D a+a\C D,
\end{equation}
as elements in $\bH_A\otimes C(V).$ Moreover, the map $z\to \zeta(z)$ defines an algebra homomorphism $\zeta: Z(\bH_A)\to A[\wti W]^{\wti W}.$
\end{theorem}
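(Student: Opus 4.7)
I would prove this as a lift of the specialized Vogan conjecture \cite[Theorem~4.2]{BCT} from a fixed parameter~$k$ to the generic algebra~$\bH_A$, exploiting the fact that the construction in \emph{loc.\ cit.} is polynomial in~$\underline k$. The central tool is the odd super-derivation $d$ on the $\bZ/2$-graded algebra $\bH_A\otimes C(V)$ defined by $d(x)=\C D x-(-1)^{|x|}x\C D$ for $x$ of homogeneous Clifford parity $|x|$. Two preliminary observations drive the whole argument: first, $d\circ\rho=0$ on $A[\wti W]$, because for $\wti w\in\wti W$ the Clifford parity of $\rho(\wti w)=p(\wti w)\otimes\wti w$ equals $(-1)^{\ell(w)}=\sgn(\wti w)$, so \eqref{e:sgninv} reads precisely $d(\rho(\wti w))=0$; and second, $d(z\otimes 1)=0$ for every $z\in Z(\bH_A)$, since $z\otimes 1$ is central in $\bH_A\otimes C(V)$. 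Thus \eqref{e:defzeta} says that, on even $d$-cycles, the class of $z\otimes 1$ modulo $d$-exact terms has a unique representative in $\rho(A[\wti W])$.

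\textbf{Existence.} I would run the inductive reduction from \cite[\S4]{BCT} over the base ring~$A$. Using the PBW decomposition $\bH_A=S(V_\bC)\otimes A[W]$, filter elements of $\bH_A\otimes C(V)$ by $S(V_\bC)$-degree. The heart of the \cite{BCT} argument rewrites the leading-degree contribution $\xi\otimes 1$ for $\xi\in V$ by using $\wti\xi=\xi-T_\xi$ together with the Clifford relation $\xi\otimes\xi=-\langle\xi,\xi\rangle\cdot 1$ inside $\bH_A\otimes C(V)$, producing a $d$-exact term plus strictly lower-filtration material which eventually lands in $\rho(A[\wti W])$. All coefficients arising are polynomial in $\underline k$ (they are products of the $T_\xi$'s, which are $A$-linear in $\underline k$), so the recursion terminates inside $\bH_A\otimes C(V)$ and produces $a\in(\bH_A\otimes C(V))_\odd$ and $\zeta(z)\in A[\wti W]$ satisfying \eqref{e:defzeta}.

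\textbf{Uniqueness, centrality, multiplicativity.} For uniqueness, assume $\rho(\zeta)=d(b)$ for some $\zeta\in A[\wti W]$ and odd~$b$. Specializing at any real~$k$ where \cite[Theorem~4.2]{BCT} and its uniqueness assertion hold forces the specialization $\zeta|_k=0$; the set of admissible $k$ is Zariski dense in $\Spec A$, so $\zeta=0$ in $A[\wti W]$. Centrality of $\zeta(z)$ then follows by a standard super-commutator computation: for $\wti w\in\wti W$, centrality of $z\otimes 1$ together with $d\circ\rho=0$ and the graded Leibniz rule yield $[\rho(\wti w),\rho(\zeta(z))]\in\im d$; by the uniqueness just proved, the left-hand side is zero, and injectivity of $\rho$ on $A[\wti W]$ gives $\zeta(z)\in Z(A[\wti W])$. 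Multiplicativity is similar: multiplying the defining equations for $z_1,z_2\in Z(\bH_A)$ and applying the graded Leibniz rule together with $d\circ\rho=0$ produces
\[
z_1z_2\otimes 1\equiv \rho\bigl(\zeta(z_1)\zeta(z_2)\bigr)\pmod{\im d},
\]
and uniqueness forces $\zeta(z_1z_2)=\zeta(z_1)\zeta(z_2)$.

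\textbf{Main obstacle.} The substantive work is the existence step, specifically checking that the termination of the \cite{BCT} inductive reduction is literally $A$-linear and does not at any stage require inverting a polynomial in~$\underline k$; this reduces to verifying that every intermediate identity in their induction is a \emph{polynomial} identity in the generic parameters, not merely a specialized one. Uniqueness by specialization-plus-density and the homomorphism property by formal super-algebra manipulations are comparatively routine once existence is in hand.
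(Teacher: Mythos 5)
Your plan is essentially the paper's: both proofs lift the inductive filtration argument of \cite[Sections 4--5]{BCT} to the base ring $A=\bC[\underline k]$, with the crux --- which you correctly isolate as the ``main obstacle'' --- being that a class $\overline c$ in the associated graded $\bH_{A,0}\otimes C(V)$ lifts to an element $c$ of $\bH_A\otimes C(V)$ with coefficients polynomial (not merely rational) in $\underline k$. The paper performs that lift explicitly by replacing each $\xi_{w,i}\in V$ in a monomial representative of $\overline c$ by $\wti\xi_{w,i}=\xi_{w,i}-T_{\xi_{w,i}}$, so that the recursion on filtration degree stays inside $\bH_A\otimes C(V)$; once the generic direct-sum decomposition $\ker(d^\triv)=\im(d^\sgn)\oplus\rho(A[\wti W]^{\wti W})$ is in hand, existence, uniqueness, centrality of $\zeta(z)$, and (via the generic versions of \cite[Lemmas 5.2--5.3]{BCT}) multiplicativity all follow at once. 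Where you diverge is in the subsidiary steps: you obtain uniqueness by specializing at real parameters where \cite[Theorem 4.2]{BCT} applies and invoking Zariski density of $\bR^{W\backslash F}$ in $\Spec A$, and then deduce centrality and multiplicativity by graded Leibniz manipulations combined with that uniqueness. This route is valid (for multiplicativity one should take the $a_i$ in the $\sgn$-isotypic component, so that $d^2(a_i)=[\C D^2,a_i]=0$ and $d(a_1)d(a_2)=d(a_1\,d(a_2))$), but it replays as a corollary what the paper reads off directly from the direct-sum decomposition. One small slip in your existence sketch: the Clifford relation is $\xi^2=-\langle\xi,\xi\rangle$ inside $C(V)$, i.e.\ $1\otimes\xi^2=-\langle\xi,\xi\rangle\,(1\otimes 1)$ in $\bH_A\otimes C(V)$; the tensor $\xi\otimes\xi$ (with the first factor a degree-one element of $S(V_\bC)\subset\bH_A$) is not a scalar.
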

Notice that under the homomorphism $\zeta$ we have
\begin{equation}\label{e:zetaomega}
\zeta(\Omega)=\Omega_{\wti W},
\end{equation}
where $\Omega$ is as in (\ref{e:omega}) and $\Omega_{\wti W}$ is as in (\ref{e:omegaWtil}).

\begin{proof} We proceed as in \cite[section 4]{BCT}. Let $\bH_A^0\subset \bH_A^1\subset\dots\bH_A^n\subset\dots$ be the filtration coming from the degree filtration of $S(V_\bC)$. (The group algebra $A[W]$ has degree zero.) The associated graded object $\oplus_j \overline \bH_A^j$, $\overline\bH_A^j=\bH_A^j/\bH_A^{j-1}$, is naturally isomorphic as an $A$-algebra to $\bH_{A,0}=A\otimes_\bC \bH_{0}.$ Define 
$$d:\bH_A\otimes C(V)\to \bH_A\otimes C(V),$$
by extending linearly $d(h\otimes c_1\dots c_\ell)=\C D\cdot (h\otimes c_1\dots c_\ell)-(-1)^\ell(h\otimes c_1\dots c_\ell)\cdot \C D$, $h\in\bH_A,$ $c_i\in V$, and restrict $d$ to 
$$d^\triv:(\bH_A\otimes C(V))^\triv\to (\bH_A\otimes C(V))^\sgn,$$
as in \cite[section 5]{BCT}. The statement of Theorem \ref{t:vogan} follows from
$$\ker(d^\triv)=\im(d^\sgn)\oplus\rho(A[\wti W]^{\wti W}),$$
see \cite[Theorem 5.1]{BCT}. This in turn is proved as follows. 
Firstly, one verifies that $\ker(d^\triv)\supset \im(d^\sgn)\oplus\rho(A[\wti W]^{\wti W})$.
Secondly, $d^\triv$ induces a graded differential
$$\overline d^\triv:(\bH_{A,0}\otimes C(V))^\triv\to (\bH_{A,0}\otimes C(V))^\sgn,$$
for which \cite[Corollary 5.9]{BCT} shows that
\begin{equation}\label{e:diffgraded}
\ker (\overline d^\triv)=\im (\overline d^\sgn)\oplus \overline \rho(A[\wti W]^{\wti W}).
\end{equation}
Thirdly, one proceeds by induction on  degree in $\bH_A\otimes C(V)$ to deduce from (\ref{e:diffgraded}) the opposite inclusion $\ker(d^\triv)\subset \im(d^\sgn)\oplus\rho(A[\wti W]^{\wti W})$. This is the step that we need to examine more closely. 

Let $b\in \ker(d^\triv)$ be an element of $\bH_A^n\otimes C(V)$ (i.e., an element of degree $n$ in the filtration), which can be specialized to $z\otimes 1$. Since $d^\triv(b)=0$, taking the graded objects, we have $\overline d^\triv(\overline b)=0$ in $\bH_{A,0}\otimes C(V).$ From (\ref{e:diffgraded}), there exists $\overline c\in  (\bH^{n-1}_{A,0}\otimes C(V))^\sgn$ and $s\in A[\wti W]^{\wti W}$ such that 
$$\overline b=\overline d^\sgn\overline c+\overline\rho(s).$$
Choose $c\in (\bH^{n-1}_A\otimes C(V))^\sgn$ such that $\overline c$ is the image of $c$ in $(\bH^{n-1}_{A,0}\otimes C(V))^\sgn$. For example, if $\overline c=\sum w\xi_{w,1}\dots\xi_{w,n-1}\otimes f_w$, $w\in W,$ $\xi_{w,i}\in V_\bC,$ $f_w\in C(V)$, we can choose $c=\sum w\wti \xi_{w,1}\dots\wti\xi_{w,n-1}\otimes f_w$. Then
$$\overline{b-d^\sgn c-\rho(s)}=\overline b-\overline d^\sgn\overline c-\overline\rho(s)=0,$$
hence $b-d^\sgn c-\rho(s)\in (\bH_A^{n-1}\otimes C(V))^\triv.$ On the other hand, 
$$d^\triv(b-d^\sgn c-\rho(s))=d^\triv(b)-d^2(c)-d^\triv(\rho(s))=0,$$
where $d^\triv(b)=0$ by assumption, $d^2(c)=0$ by \cite[Lemma 5.3]{BCT}, and $d^\triv(\rho(s))=0$ by \cite[Lemma 5.2]{BCT}. But then, by induction, $b-d^\sgn c-\rho(s)=d^\sgn  c'+\rho(s'),$ where $s'\in A[\wti W]^{\wti W}$ and $ c'\in (\bH_A\otimes C(V))^\sgn.$ 

\end{proof}

\begin{corollary}\label{c:imagezeta}
Let $\zeta:Z(\bH_A)\to A[\wti W]^{\wti W}$ be the algebra homomorphism from Theorem \ref{t:vogan}. Then the image of $\zeta$ lies in $A[\ker\sgn]$, where $\sgn$ is the sign $\wti W$-representation.
\end{corollary}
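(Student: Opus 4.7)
My approach is to exploit the $\mathbb{Z}/2$-grading on $C(V)$, extended to $\bH_A\otimes C(V)$ by placing $\bH_A$ in degree zero. Since $V\subset C(V)_\odd$, the Dirac element $\C D=\sum_i\wti\xi_i\otimes\xi^i$ is odd, and together with $a\in\bH_A\otimes C(V)_\odd$ also odd, the combination $\C Da+a\C D$ is even. Since $z\otimes 1$ is manifestly even, equation (\ref{e:defzeta}) forces $\rho(\zeta(z))$ to lie in the even part of $\bH_A\otimes C(V)$.

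Next, under the embedding (\ref{e:embedWtil}), $\wti s_\al=\al/|\al|\in C(V)_\odd$ while $z\mapsto -1\in C(V)_\even$, so the $C(V)$-parity of any $\wti w\in\wti W$ coincides with $\sgn(\wti w)$. Equivalently, $\rho(\wti w)=p(\wti w)\otimes\wti w$ is even in $\bH_A\otimes C(V)$ precisely when $\wti w\in\wti W'=\ker\sgn$. Splitting $\zeta(z)=\zeta(z)_++\zeta(z)_-$ according to support on $\wti W'$ versus $\wti W\setminus\wti W'$, Step~1 gives $\rho(\zeta(z)_-)=0$.

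Finally, since $\rho(z)=p(z)\otimes z=1\otimes(-1)=-1$ in $\bH_A\otimes C(V)$, pairing each $\wti w\notin\wti W'$ with $z\wti w$ yields $\rho(z\wti w)=-\rho(\wti w)$. Using the linear independence of the set $\{p(\wti w)\otimes\wti w\}$ as $\wti w$ runs over a system of coset representatives of $\wti W/\langle z\rangle$ (which follows from distinctness of the $p(\wti w)$ in $\bH_A$ together with invertibility of $\wti w\in C(V)^\times$), one sees directly that $\rho(\zeta(z)_-)=0$ forces the coefficient of $\wti w$ in $\zeta(z)_-$ to equal the coefficient of $z\wti w$; consequently $\zeta(z)_-\in(1+z)A[\wti W]=\ker\rho$. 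Replacing $\zeta(z)$ by $\zeta(z)_+$ (which differs by an element of $\ker\rho$ and hence preserves (\ref{e:defzeta})) produces a representative in $A[\wti W']=A[\ker\sgn]$. The main subtlety is that $\rho$ is not injective on $A[\wti W]$: its kernel is $(1+z)A[\wti W]$, so the ``uniqueness'' asserted in Theorem \ref{t:vogan} must be understood modulo $\ker\rho$, and the corollary amounts to saying that $\zeta(z)$ admits a canonical representative in $A[\wti W']$, which the parity analysis delivers.
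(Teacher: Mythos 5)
Your argument is correct and follows the same strategy as the paper: the paper's proof is exactly the parity observation, ending with ``Thus $\rho(\zeta(z))\in\bH_A\otimes C(V)_\even$, and the claim follows.'' You have simply unwound what ``the claim follows'' means, and in doing so you have flagged a real (if minor) imprecision: because $\ker\rho=(1+z)A[\wti W]$ is nonzero and meets $Z(A[\wti W])$, the element $\zeta(z)\in Z(A[\wti W])$ in Theorem~\ref{t:vogan} is only determined modulo $\ker\rho$, so the corollary is most accurately read as asserting the existence of a representative of $\zeta(z)$ supported on $\ker\sgn$. Your computation of $\ker\rho$ (using linear independence of $\{p(\wti w)\otimes\wti w\}$ over coset representatives, together with $\rho(z\wti w)=-\rho(\wti w)$) is correct, as is the observation that the $C(V)$-parity of $\rho(\wti w)$ coincides with $\sgn(\wti w)$ since $\wti s_\al\mapsto\al/|\al|$ is odd and $z\mapsto-1$ is even. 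The paper implicitly treats $\zeta$ as valued in $A[\wti W]/\ker\rho$ (equivalently, thinks genuinely with $z=-1$), and on that quotient the parity argument is immediately conclusive; your version makes this explicit without changing the substance.
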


\begin{proof}
For every $z\in Z(\bH_A),$ consider the defining equation (\ref{e:defzeta}) for $\zeta(z).$ Since $a\in \bH_A\otimes C(V)_\odd$, we have $\C D a+a \C D\in \bH_A\otimes C(V)_\even$, and also clearly $z\otimes 1\in \bH_A\otimes C(V)_\even.$ Thus $\rho(\zeta(z))\in \bH_A\otimes C(V)_\even$, and the claim follows.
\end{proof}

\section{Dirac induction (algebraic version)}\label{s:4}
The goal of this section is to construct an inverse of the restriction map $\rr$ from (\ref{e:rmap}), using the Dirac index theory.

\subsection{The local Dirac index}\label{s:3.4} The notion of Dirac index for finite-dimensional $\bH$-modules was introduced in \cite[section 2.9]{CT}. Let $X$ be a finite-dimensional $\bH$-module. Denote
\begin{equation}
\wti W'=\begin{cases}
\wti W, & \text{ if $\dim(V)$ is odd,}\\
\ker\sgn, & \text{ if $\dim(V)$ is even.}
\end{cases}
\end{equation}
Set also $W'=p(\wti W')\subset W.$

Assume first that $\dim V$ is even. Then $C(V)$ has a unique complex simple module $S$ whose restriction to $C(V)_\even$ splits into the sum of two inequivalent complex simple $C(V)_\even$-modules $S^+,S^-.$ The Dirac operator  $D\in \End_{\bH\otimes C(V)}(X\otimes S)$ is the endomorphism given by the action of the Dirac element $\C D$. When restricted to $\bH\otimes C(V)_\even$,  $D$ maps $X\otimes S^\pm$ to $X\otimes S^\mp$, and denote by $D^\pm:X\otimes S^\pm\to X\otimes S^\mp$, the corresponding restrictions. From (\ref{e:sgninv}), we see that $D^\pm$ both commute with the action of $\wti W'$. Define the Dirac cohomology of $X$ to be the $\wti W$-representation 
\begin{equation}\label{e:diraccoh}
H_D(X)=\ker D/\ker D\cap\im D.
\end{equation}
We may also define cohomology with respect to $D^\pm$ as the $\wti W'$-representations
\begin{equation}\label{e:diraccohpm}
H^+_D(X)=\ker D^+/\ker D^+\cap\im D^-\text{ and }H_D^-=\ker D^-/\ker D^-\cap\im D^+.
\end{equation}
Then the index of $X$ is the virtual $\wti W'$-module
\begin{equation}\label{e:diracindex}
I(X)=H^+_ D- H^-_D.
\end{equation}
Now consider the case when $\dim V$ is odd. 
The Clifford algebra $C(V)$ has two nonisomorphic complex simple modules $S^+,S^-$. The restriction of $S^+$ and $S^-$ to $C(V)_\even$ are isomorphic, and they differ by the action of the center $Z(C(V))\cong \bZ/2\bZ.$ As before, we have the Dirac operator $D\in \End (X\otimes S^+)$ given by the action of $\C D$. By (\ref{e:sgninv}), $D^+$ is $\sgn$ $\wti W$-invariant, but by composing $D$ with the vector space isomorphism $S^+\to S^-$, we may regard $D^+:X\otimes S^+\to X\otimes S^-$ as $\wti W$-invariant. Similarly, we define $D^-$. Then the definitions (\ref{e:diraccoh}), (\ref{e:diraccohpm}) and (\ref{e:diracindex}) make sense in this case as well. Notice that the Dirac index of $I(X)$  is a virtual $\wti W'=\wti W$-module.

Recall that $S^\pm$ admit structures of unitary $C(V)_\even$-modules. If $X$ is unitary (or just Hermitian), let $(~,~)_{X\otimes S^\pm}$ denote the tensor product Hermitian form on $X\otimes S^\pm.$ Then $D^+,D^-$ are adjoint with respect to $(~,~)_{X\otimes S^\pm}$, i.e.,
\begin{equation}\label{Dadj}
(D^+x,y)_{X\otimes S^-}=(x,D^-y)_{X\otimes S^+}, \text{ for all }x\in X\otimes S^+,\ y\in X\otimes S^-.
\end{equation}

The following result is standard.

\begin{lemma}\label{l:CT}
As virtual $\wti W'$-modules, $I(X)=X\otimes S^+-X\otimes S^-,$ for every finite-dimensional $\bH$-module $X$. 
\end{lemma}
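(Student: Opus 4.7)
The plan rests on the identity $\C D^2 = -\Omega\otimes 1 + \rho(\Omega_{\wti W})$ from (\ref{e:Dsquare}), which, together with the centrality of $\Omega$ in $\bH$ and of $\Omega_{\wti W}$ in $\bC[\wti W]$, forces $D^2$ to act as a scalar on each $\wti W'$-isotypic component of $X \otimes S$ whenever $X$ is irreducible. I will therefore first establish the lemma for irreducible $X$, and then reduce the general case to this one.

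Assuming $X$ is irreducible with $\Omega$ acting by $\lambda(\Omega)$, and writing $c_\sigma$ for the scalar by which $\Omega_{\wti W}$ acts on an irreducible $\wti W'$-representation $\sigma$, the operator $D^2$ acts on the $\sigma$-isotypic component $(X \otimes S^\pm)_\sigma$ by $\mu_\sigma := c_\sigma - \lambda(\Omega)$. Since $D$ is $\wti W'$-equivariant, it preserves these isotypes and the cohomology splits as $H_D^\pm = \bigoplus_\sigma H_{D,\sigma}^\pm$. On each isotype I split into two cases. If $\mu_\sigma \neq 0$, then $D^2$ is invertible, hence so is $D$, so $D^+_\sigma$ identifies $(X \otimes S^+)_\sigma$ with $(X \otimes S^-)_\sigma$ as $\wti W'$-modules while $H_{D,\sigma}^\pm = 0$; both sides of the lemma vanish on this isotype. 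If $\mu_\sigma = 0$, then $D^2 = 0$ on $(X \otimes S)_\sigma$, which is therefore a genuine $\bZ/2$-graded complex of $\wti W'$-modules, and the classical Euler--Poincar\'e principle gives $[(X \otimes S^+)_\sigma] - [(X \otimes S^-)_\sigma] = [H_{D,\sigma}^+] - [H_{D,\sigma}^-]$. Summing over $\sigma$ yields the identity for irreducible $X$.

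For general finite-dimensional $X$, I would decompose by generalized central characters of $Z(\bH)$ and then filter each generalized eigenspace by the kernels of the successive powers of $\Omega - \lambda(\Omega)$; this produces a chain of $\bH$-submodules whose subquotients have honest central character $\lambda$, to which the irreducible-case argument applies. Both sides of the desired identity are additive in short exact sequences --- the right-hand side trivially by exactness of $-\otimes S^\pm$, and the left-hand side by the corresponding additivity of the Dirac index --- so the identity propagates from the layers back to $X$ itself. The main obstacle is precisely this last step: when $D^2$ is nilpotent but nonzero, the tensored short exact sequences are not short exact sequences of honest $\bZ/2$-graded complexes, so the naive long exact sequence of cohomology is unavailable. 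The remedy is to pass to the associated graded for the filtration $\{\ker(\Omega - \lambda(\Omega))^k\}$, on which $D^2$ really does vanish on the relevant isotypes, and to observe that both $[X \otimes S^+] - [X \otimes S^-]$ and $I(X)$ descend to invariants of this filtration.
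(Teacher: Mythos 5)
Your argument for irreducible $X$ is precisely the argument the paper gives: decompose $X\otimes S^\pm$ into $\wti W'$-isotypes, observe that $D^2 = -\Omega\otimes 1 + \rho(\Omega_{\wti W})$ acts on each isotype by a scalar $\mu_\sigma$, and split into the cases $\mu_\sigma\neq 0$ (where $D^\pm_\sigma$ are isomorphisms and both sides vanish) and $\mu_\sigma = 0$ (where one has a genuine two-term complex and applies the Euler principle). The paper phrases the dichotomy as ``$D^+_\sigma D^-_\sigma\neq 0$, in which case $D^\pm_\sigma$ are isomorphisms'' vs.\ ``$D^+_\sigma D^-_\sigma = 0$'', which tacitly presupposes exactly the scalar dichotomy you make explicit, and then invokes the Euler principle on the resulting four-term exact sequence. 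So for the core case the two arguments are the same.

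Where you diverge is in trying to be careful about general finite-dimensional $X$. You are right that the paper's two-case split is not exhaustive unless $\Omega$ acts semisimply: if $\Omega-\lambda(\Omega)$ is nilpotent but nonzero, $D^2_\sigma$ is neither zero nor invertible, and the Euler principle for a $\bZ/2$-graded \emph{complex} no longer applies, because $(X\otimes S)_\sigma$ is not a complex. Your instinct to filter by powers of $\Omega-\lambda(\Omega)$ and pass to the associated graded is natural, but notice that the final step --- ``observe that $I(X)$ descends to an invariant of this filtration'' --- is an assertion, not an argument, and it is exactly the missing content: the Dirac cohomology $H^\pm_D$ is not exact in $X$, so there is no a priori long exact sequence from which additivity of $I(-)$ would follow. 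Concretely, the desired identity on a single isotype is equivalent to $\mathrm{rank}(D^-_\sigma D^+_\sigma) = \mathrm{rank}(D^+_\sigma D^-_\sigma)$, and an odd nilpotent operator $D_\sigma$ whose square is a nonzero nilpotent can fail this (e.g.\ a size-$3$ Jordan block contributes $\pm 1$ to $[X\otimes S^+]-[X\otimes S^-]$ but $0$ to $I$). The paper does not address this either; in the applications (Theorems \ref{t:CT}, \ref{t:ind}, \ref{t:globalindex}) the lemma is used only for irreducible or semisimple tempered modules, where $\Omega$ acts by a scalar and the concern evaporates. So: your proof is correct and matches the paper's for the case that actually gets used, you identify a genuine subtlety that the paper passes over silently, but your proposed fix for the general case does not yet close the gap --- it restates the claim at the level of the filtration rather than proving it.
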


\begin{proof}
Let $\wti \sigma$ be an irreducible $\wti W'$-module.  Let $D^\pm_{\wti\sigma}$ be the restrictions of $D^\pm$ to the $\wti\sigma$-isotypic component $(X\otimes S^\pm)_{\wti\sigma}$ in $X\otimes S^\pm$, respectively. There are two cases:
\begin{enumerate}
\item[(i)] $D^+_{\wti\sigma}D^-_{\wti\sigma}\neq 0$ or equivalently $D^-_{\wti\sigma}D^+_{\wti\sigma}\neq 0$. In this case, $D^+_{\wti\sigma}$ and $D^-_{\wti\sigma}$ are both isomorphisms, and thus the identity in the Lemma is trivially verified.
\item[(ii)] $D^+_{\wti\sigma}D^-_{\wti\sigma}= 0$ or equivalently $D^-_{\wti\sigma}D^+_{\wti\sigma}=0$. Consider the complex
\begin{equation}
0\to \ker D^+_{\wti\sigma}\to (X\otimes S^+)_{\wti\sigma}\overset{D^+_{\wti\sigma}}\longrightarrow (X\otimes S^-)_{\wti\sigma}\overset{D^-_{\wti\sigma}}\longrightarrow \im D^-_{\wti\sigma}\to 0.
\end{equation}
\end{enumerate}
The claim follows from the Euler principle.

\end{proof}

Two important observations are that
 \begin{equation}\label{e:obs1}
(S^+-S^-)\otimes (S^+-S^-)^*\cong \frac 2{[\wti W:\wti W']} \sum_{i=0}^n (-1)^i \wedge^i V_\bC,\text { as virtual }\wti W'\text{-modules},
\end{equation}
and 
that the character of  $\sum_{i=0}^n (-1)^i \wedge^i V_\bC$ on $w\in W$ is 
\begin{equation}\label{e:obs2}
\Det_{V_\bC}(1-w).
\end{equation}

In particular, since $S^+-S^-$ is supported on the elliptic set,
\begin{equation}\label{e:vanishinduced}
I(\ind_{\bH_P}^\bH(R_\bC(\bH_P))=0,
\end{equation}
for every proper parabolic subalgebra $\bH_P$ of $\bH$. Let $R(\wti W')_\gen$ be the Grothendieck group of $\wti W'$ spanned by the genuine representations, and let $\langle~,~\rangle_{\wti W'}$ be the usual character pairing on $R(\wti W').$

We define an involution 
\begin{equation}\label{e:Sg}
\Sg: R_\bZ(\wti W')\to R_\bZ(\wti W'),
\end{equation}
as follows. If $\dim V$ is odd, $\wti W'=\wti W$, and set $\Sg(\sigma)=\sigma\otimes\sgn$ for every $\sigma\in R_\bZ(\wti W').$ Suppose that $\dim V$ is even. Assume $\wti\sigma\cong\wti\sigma\otimes\sgn,$ for an irreducible $\wti W$-representation $\wti\sigma.$ Then $\wti\sigma$ restricts to $\wti W'$ as a sum $\wti\sigma^+\oplus\wti\sigma^-$ of two inequivalent irreducible representation of the same dimension. Set $\Sg(\wti\sigma^\pm)=\wti\sigma^\mp.$ If $\wti\sigma\not\cong\wti\sigma\otimes\sgn,$ then $\wti\sigma|_{\wti W'}\cong \wti\sigma\otimes\sgn|_{\wti W'}$ and in this case set $\Sg(\wti\sigma|_{\wti W'})=\wti\sigma|_{\wti W'}.$ This assignment extends to an involution $\Sg: R_\bZ(\wti W')\to R_\bZ(\wti W')$.

For every $\wti\sigma_1,\wti\sigma_2\in R_\bZ(\wti W'),$ one has
\begin{equation}
\langle\wti\sigma_1,\wti\sigma_2\rangle_{\wti W'}=\langle\Sg(\wti\sigma_1),\Sg(\wti\sigma_2)\rangle_{\wti W'}.
\end{equation}
Then we obtain the following result.
\begin{theorem}[\cite{CT}]\label{t:CT}
\begin{enumerate}
\item The map $i: R_\bZ(W)\to R_\bZ(\wti W')_\gen$, $\delta\mapsto \delta\otimes(S^+-S^-)$, gives rise to an injective map $i:\overline R_\bC(W)\to R_\bC(\wti W')_\gen$ which satisfies
\begin{equation}
\langle i(\delta_1),i(\delta_2)\rangle_{\wti W'}=2\langle\delta_1,\delta_2\rangle^\el_W, \text{ for all }\delta_1,\delta_2\in R_\bC(W).
\end{equation}
\item Let $\delta\in\overline R_\bZ(W)$ be given.  Then 
\begin{equation}
i(\delta)=\wti\delta^+-\wti\delta^-,
\end{equation}
for unique $\wti W'$-representations $\wti \delta^+,\wti\delta^-$ such that 
\begin{equation}
\begin{aligned}
&\wti\delta^-=\Sg(\wti\delta^+),\ 
\langle\wti\delta^+,\wti\delta^-\rangle_{\wti W'}=0,\text{ and}
&\langle\wti\delta^+,\wti\delta^+\rangle_{\wti
  W'}=\langle\wti\delta^-,\wti\delta^-\rangle_{\wti
  W'}=\langle\delta,\delta\rangle^\el_W.
\end{aligned}
\end{equation}
 In particular, if $\delta$ is an element of norm one in $\overline R_\bZ(W)$, then $\wti\delta^+$, $\wti\delta^-$ are irreducible.

\end{enumerate}
\end{theorem}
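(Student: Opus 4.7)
The plan is to treat part (1) by a direct computation with characters and part (2) by a positive/negative decomposition in $R_\bZ(\wti W')_\gen$. The main technical input is that, combining (\ref{e:obs1}) and (\ref{e:obs2}),
$$|\chi_{S^+-S^-}(\wti w)|^2 = \frac{2}{[\wti W:\wti W']}\,\Det_{V_\bC}(1-p(\wti w))$$
for all $\wti w\in\wti W'$; in particular $\chi_{S^+-S^-}$ vanishes whenever $p(\wti w)$ is non-elliptic in $W$. To show that $i$ factors through $\overline R_\bC(W)$, observe that for a proper parabolic $W_P\subsetneq W$ the character $\chi_{\ind_{W_P}^W\sigma}$ is supported on non-elliptic elements (each $w$ with $x^{-1}wx\in W_P$ fixes $x\cdot V^{W_P}$), so the product $\chi_\delta(p(\wti w))\,\chi_{S^+-S^-}(\wti w)$ vanishes identically on $\wti W'$ when $\delta=\ind_{W_P}^W\sigma$.

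For the isometry, I would expand
$$\langle i(\delta_1), i(\delta_2)\rangle_{\wti W'} = \frac{1}{|\wti W'|}\sum_{\wti w\in\wti W'}\overline{\chi_{\delta_1}(p(\wti w))}\,\chi_{\delta_2}(p(\wti w))\,|\chi_{S^+-S^-}(\wti w)|^2,$$
insert the identity above, push the sum down to $W'$ along the $2:1$ map $p$, and use $|\wti W'|\cdot [\wti W:\wti W']=2|W|$ to obtain
$$\frac{2}{|W|}\sum_{w\in W'}\overline{\chi_{\delta_1}(w)}\,\chi_{\delta_2}(w)\,\Det_{V_\bC}(1-w).$$
It then remains to extend the sum over $W'$ to a sum over all of $W$: in odd dimension $W'=W$, and in even dimension any $w\in W\setminus W'$ has $\Det_{V_\bC}(w)=-1$, which by a parity count on real eigenvalues forces $+1$ to be an eigenvalue and hence $\Det_{V_\bC}(1-w)=0$. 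This yields $2\langle\delta_1,\delta_2\rangle_W^\el$. Injectivity of the induced map $i:\overline R_\bC(W)\to R_\bC(\wti W')_\gen$ follows from positive definiteness of $\langle~,~\rangle_W^\el$ on $\overline R_\bC(W)$.

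For part (2), decompose the genuine $\wti W'$-modules $\delta\otimes S^\pm$ into irreducibles, $\delta\otimes S^+=\sum_\sigma n_\sigma\sigma$ and $\delta\otimes S^-=\sum_\sigma m_\sigma\sigma$, and set
$$\wti\delta^+=\sum_\sigma\max(n_\sigma-m_\sigma,0)\,\sigma,\qquad \wti\delta^-=\sum_\sigma\max(m_\sigma-n_\sigma,0)\,\sigma.$$
Then $i(\delta)=\wti\delta^+-\wti\delta^-$, $\langle\wti\delta^+,\wti\delta^-\rangle_{\wti W'}=0$, and uniqueness of such a decomposition into genuine representations with disjoint irreducible support is immediate. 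The equality $\Sg(\wti\delta^+)=\wti\delta^-$ reduces to $\Sg(\delta\otimes S^+)=\delta\otimes S^-$, which I would verify directly from the definition of $\Sg$: in odd dimension, $\Sg=\cdot\otimes\sgn$, and $S^+\otimes\sgn\cong S^-$ follows by evaluating the $\Pin$-embedding (\ref{e:embedWtil}) on lifts of reflections; in even dimension, the unique simple $C(V)$-module $S$ is self-$\sgn$-dual as a $\wti W$-module, so by the definition of $\Sg$ its two $\wti W'$-components $S^\pm$ are swapped. Finally, by part (1) and the $\Sg$-invariance of $\langle~,~\rangle_{\wti W'}$,
$$2\langle\delta,\delta\rangle_W^\el = \langle i(\delta),i(\delta)\rangle_{\wti W'} = \langle\wti\delta^+,\wti\delta^+\rangle_{\wti W'} + \langle\wti\delta^-,\wti\delta^-\rangle_{\wti W'} = 2\langle\wti\delta^+,\wti\delta^+\rangle_{\wti W'},$$
which gives the norm identity; when $\langle\delta,\delta\rangle_W^\el=1$, this forces $\wti\delta^+$ (and hence $\wti\delta^-=\Sg(\wti\delta^+)$) to be irreducible.

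The step I expect to be the main obstacle is the compatibility $\Sg(\delta\otimes S^+)=\delta\otimes S^-$: the rest is essentially a character computation once (\ref{e:obs1})-(\ref{e:obs2}) are in hand, but $\Sg$ is defined in a case-dependent manner and verifying that it coincides with the swap $S^+\leftrightarrow S^-$ in each parity of $\dim V$ requires keeping track of which specific element of $\Pin(V)$ implements the relevant isomorphism ($S^+\otimes\sgn\cong S^-$ in the odd case, $S\cong S\otimes\sgn$ in the even case).
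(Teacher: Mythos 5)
Your proposal is correct and follows essentially the same route as the paper's sketch: part (1) is the character computation packaged by (\ref{e:obs1})--(\ref{e:obs2}), and part (2) is exactly the positive/negative decomposition of the integral virtual character $i(\delta)$ combined with the observation $\Sg(i(\delta))=-i(\delta)$ (which is your $\Sg(\delta\otimes S^\pm)\cong\delta\otimes S^\mp$). For the tensor compatibility you rightly flag as the crux, the cleanest uniform argument is to note that in both parities $\Sg$ coincides with conjugation by any fixed $g\in\wti W\setminus\wti W'$ (Clifford theory for the index-two normal subgroup $\wti W'\triangleleft\wti W$); then $\Sg(\delta\otimes\tau)\cong\delta^{g}\otimes\tau^{g}\cong\delta\otimes\Sg(\tau)$, since $\delta^{g}\cong\delta$ whenever $\delta$ is the restriction of a $W$-representation.
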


\begin{proof}[Sketch of proof]
We sketch the proof for convenience. Claim (1) is immediate from (\ref{e:obs1}) and (\ref{e:obs2}).
For claim (2), one uses the involution $\Sg$ from (\ref{e:Sg}). Notice that
$$\Sg(i(\delta))=-i(\delta).$$
Since $i(\delta)$ is an integral virtual $\wti W'$-character, there exist $\wti W'$-representations $\wti\delta^+$ and $\wti\delta^-$ with $\langle\wti\delta^+,\wti\delta^-\rangle_{\wti W'}=0$, such that $i(\delta)=\wti\delta^+-\wti\delta^-.$ Applying $\Sg$, we conclude that $\wti\delta^-=\Sg(\wti\delta^+).$ The rest is a consequence of (1).
\end{proof}

Motivated by this result, we make the following definition.

\begin{definition}\label{d:irr0}
Let $\Irr_\gen^0\wti W'$ be the set of irreducible genuine $\wti W'$-representations $\wti\delta$ which occur as a component of $i(\delta)$ for some $\delta\in \overline R_\bZ(W)$, see Theorem \ref{t:CT}(2). From the construction, one sees that if $\wti\delta\in\Irr_\gen^0\wti W'$, then $\Sg(\wti\delta)\in\Irr_\gen^0\wti W'.$ 
\end{definition}

\begin{example} If $R$ is of type $A_{n-1}$, then $\Irr_\gen^0\wti S'_n=\{S^+,S^-\}$, where $S^+,S^-$ are the two associate spin $\wti S_n'$-modules. For comparison, in this case, $|\Irr_\gen\wti S_n'|=k_1+2k_2$, where $k_1$ is the number of partitions of $n$ of even length, and $k_2$ is the number of partitions of $n$  of odd length.

If $R$ is of type $B_n$,  then $\Irr_\gen^0\wti W'=\Irr_\gen\wti W'.$ This can be seen directly using the description of $\wti W(B_n)$-representations, see  \cite[Theorem 1.0.1 and section 3.3]{C}.

\end{example}

\subsection{The central character and Vogan's conjecture} Theorem \ref{t:vogan} imposes strict limitations on the central character of modules with nonzero Dirac cohomology. In light of Corollary \ref{c:imagezeta}, we may regard the homomorphism $\zeta$ as $$\zeta:Z(\bH)\to Z(\bC[\wti W']).$$

\begin{definition}\label{d:voganchar}
If $\wti\delta$ is an irreducible $\wti W'$-representation, define the homomorphism (central character) $\chi^{\wti\delta}: Z(\bH)\to \bC$ by $\chi^{\wti\delta}(z)=\wti\delta(\zeta(z))$ for every $z\in Z(\bH).$
\end{definition}

As in \cite{BCT}, an immediate corollary of Theorem \ref{t:vogan} is the following.

\begin{corollary}\label{c:voganchar}
Let $X$ be a $\bH$-module with central character $\chi_X$. Suppose there exists an irreducible $\wti W'$-representation $\wti\delta$ such that $\Hom_{\wti W'}[\wti\delta, H_D(X)]\neq 0.$ Then $\chi_X=\chi^{\wti\delta}.$
\end{corollary}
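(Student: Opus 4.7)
The plan is to apply Theorem \ref{t:vogan} essentially verbatim. Given $z\in Z(\bH)$, invoke the theorem (over the specialization $\bH$) to obtain $a\in \bH\otimes C(V)_\odd$ with
\begin{equation*}
z\otimes 1=\rho(\zeta(z))+\C D a+a\C D.
\end{equation*}
Apply both sides to a vector $v\in \ker D \subset X\otimes S$ (where $S$ denotes the relevant spin module, $S^+$, or $S^+\oplus S^-$, depending on parity of $\dim V$). The left-hand side acts by the scalar $\chi_X(z)$ since $z\in Z(\bH)$ and $X$ has central character $\chi_X$. On the right-hand side, $a\C D(v)=0$, while $\C D a(v)\in \im D$, yielding the congruence
\begin{equation*}
\chi_X(z)\, v \equiv \rho(\zeta(z))(v) \pmod{\im D}.
\end{equation*}

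Next, I must check that $\rho(\zeta(z))$ descends to a well-defined operator on $H_D(X)=\ker D/(\ker D\cap \im D)$. This is where Corollary \ref{c:imagezeta} enters: since $\zeta(z)\in A[\ker\sgn]$ (so its image lies in $\bC[\wti W']$), the sign-invariance identity (\ref{e:sgninv}) reads $\rho(\wti w)\C D=\C D\rho(\wti w)$ for $\wti w\in \ker\sgn$, hence $\rho(\zeta(z))$ commutes with $\C D$. Therefore $\rho(\zeta(z))$ preserves both $\ker D$ and $\im D$ and induces an endomorphism of $H_D(X)$, which by the displayed congruence coincides with multiplication by $\chi_X(z)$.

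Finally, I identify how $\rho(\zeta(z))$ acts on the $\wti\delta$-isotypic component of $H_D(X)$. Because $\zeta(z)\in \bC[\wti W]^{\wti W}$ lies in the center of $\bC[\wti W]$, it is a fortiori central in the subalgebra $\bC[\wti W']$; Schur's lemma then forces $\rho(\zeta(z))$ to act on the isotypic component $\Hom_{\wti W'}[\wti\delta, H_D(X)]\otimes \wti\delta$ by the scalar $\wti\delta(\zeta(z))=\chi^{\wti\delta}(z)$ of Definition \ref{d:voganchar}. Under the assumption that this isotypic component is nonzero, equating the two scalar actions of $z$ on it gives $\chi_X(z)=\chi^{\wti\delta}(z)$ for every $z\in Z(\bH)$.

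There is no substantive obstacle; the only thing to watch carefully is the distinction between $\wti W$ and $\wti W'$ when descending $\rho(\zeta(z))$ to Dirac cohomology, and this is handled precisely by Corollary \ref{c:imagezeta} combined with (\ref{e:sgninv}). The argument is the specialization-level analogue of \cite[Corollary 4.3]{BCT}, with Theorem \ref{t:vogan} providing the necessary generic input.
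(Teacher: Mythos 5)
Your proof is correct and follows the same line of argument the paper invokes by reference to \cite{BCT}. You rightly identify Corollary \ref{c:imagezeta} as the key point guaranteeing that $\rho(\zeta(z))$ commutes (rather than anti-commutes) with $\C D$, and hence descends to $H_D(X)$; the only detail left slightly implicit is that the displayed congruence holds modulo $\ker D\cap\im D$ (not just $\im D$), which follows at once since $\rho(\zeta(z))(v)$ and $\chi_X(z)v$ both lie in $\ker D$.
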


\subsection{The rank pairing} Let $K_0(\bH)$ be the Grothendieck group of finitely generated projective $\bH$-modules. Every such projective module $P$ is given by an idempotent $p\in M_N(\bH)$, for some natural number $N$, such that $P$ can be identified with the image of $p$ acting on $\bH^N.$ One defines the rank pairing to be the bilinear map
\begin{equation}\label{e:rkpair}
[~,~]: K_0(\bH)\times R_\bZ(\bH)\to \bZ,\quad \text{such that }[[P],[\pi]]:=\rank(\pi(p)),
\end{equation} 
for every finitely generated projective module $P$ and every irreducible $\bH$-module $(\pi,V_\pi);$ here $\pi(p)$ is regarded as an element of $M_N(\End_\bH(V_\pi)).$

The rank pairing extends to a Hermitian pairing $[~,~]:K_0(\bH)_\bC\times R_\bC(\bH)\to \bC.$ 

\begin{remark}\label{r:Wrkproj} 
If $\delta$ is a finite-dimensional $W$-representation, the induced module $\bH\otimes_{\bC[W]}\delta$, an $\bH$-module under left multiplication, is a finitely generated projective $\bH$-module. For every finite-dimensional $\bH$-module $X$, the rank pairing is
\begin{equation}\label{e:Wrkproj}
[\bH\otimes_{\bC[W]}\delta,X]=\dim \Hom_W(\delta, X).
\end{equation}
The same formula holds with $W'$ in place of $W.$
\end{remark}

We define a notion of support for elements of $K_0(\bH)_\bC$. If $\lambda\in V_\bC^\vee,$ let $\mathsf{Irr}_{W\lambda}(\bH)$ denote the set of isomorphism classes of irreducible $\bH$-modules with central character $W\lambda.$ 

\begin{definition}\label{d:support}
If $A\in K_0(\bH)_\bC$, define
\begin{equation}
\supp(A)=\{\lambda\in V_\bC^\vee: \text{there exists } X\in \mathsf{Irr}_{W\lambda}(\bH)\text{ such that } [A,X]\neq 0\}.
\end{equation}
Set $F_iK_0(\bH)_\bC=\{A\in K_0(\bH)_\bC: \dim\supp(A)\le i\}$ and  $F_{-1}K_0(\bH)_\bC=\cap_{\pi\in \mathsf{Irr}(\bH)}\ker[\cdot,\pi].$ Define
$$F_0H_0(\bH)=F_0K_0(\bH)_\bC/F_{-1}K_0(\bH)_\bC.$$
\end{definition}

By definition,
\begin{equation}\label{e:rknonleft}
[A,X]=0\text{ for all }X\in\mathsf{Irr}(\bH)\text{ implies that }A\in F_{-1}K_0(\bH)_\bC.
\end{equation}

\begin{lemma}\label{l:F0H0}
The rank pairing (\ref{e:rkpair}) gives rise to a canonical injective linear map
$\Phi: F_0H_0(\bH)\to \overline R_\bC(\bH)$ such that
\begin{equation}
[A,X]=\langle \Phi(A),X\rangle^\EP_\bH,\text{ for all } A\in F_0H_0(\bH),~ X\in \overline R_\bC(\bH).
\end{equation}
\end{lemma}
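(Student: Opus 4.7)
The plan is to build $\Phi(A)$ for $A\in F_0K_0(\bH)_\bC$ as a finite-dimensional virtual module, exploiting that the rank pairing equals the Euler-Poincar\'e pairing on projectives, together with non-degeneracy of the Euler-Poincar\'e form on each central character block. The input identity is that for any finitely generated projective $P$ and any finite-dimensional $X$,
\[
[P,X]=\dim\Hom_\bH(P,X)=\langle P,X\rangle^\EP_\bH,
\]
since $\Ext^i_\bH(P,X)=0$ for $i\ge 1$; so we only need to replace $A$ by a finite-dimensional representative pairing the same way with all irreducibles.

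First I would collect the finiteness input: by Lusztig's result \cite[Prop.~4.5]{L}, $\bH$ is a finitely generated $Z(\bH)$-module, hence $\mathsf{Irr}_{W\lambda}(\bH)$ is finite for every $W\lambda$; together with $\dim\supp(A)\le 0$, this forces the functional $[A,\cdot]$ to be supported on a finite set of irreducibles. Next, I would establish the crux: the restriction of $\langle\cdot,\cdot\rangle^\EP_\bH$ to $\mathrm{span}_\bC(\mathsf{Irr}_{W\lambda}(\bH))$ is non-degenerate. I would prove this by first observing that $\Ext^i_\bH(X,Y)=0$ whenever $X,Y$ have different central characters (so blocks decouple, since $Z(\bH)$ acts by different scalars and one can split resolutions accordingly), and then invoking the Ext-duality for finite-dimensional $\bH$-modules available from \cite[Cor.~3.7]{OS} via the reduction equivalence $\eta$ to $\CH$-modules. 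With non-degeneracy in hand I define, for each $W\lambda\in\supp(A)$, the unique $\Phi_\lambda(A)\in\mathrm{span}_\bC(\mathsf{Irr}_{W\lambda}(\bH))$ with $\langle\Phi_\lambda(A),X\rangle^\EP_\bH=[A,X]$ for all $X\in\mathsf{Irr}_{W\lambda}(\bH)$, and set $\Phi(A)=\sum_\lambda\Phi_\lambda(A)\in R_\bC(\bH)$, a finite sum.

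Finally I would verify: decomposing any $X\in R_\bC(\bH)$ by central character and using the cross-block Ext-vanishing yields $[A,X]=\langle\Phi(A),X\rangle^\EP_\bH$ for every $X\in R_\bC(\bH)$; since parabolically induced modules lie in the radical of the Euler-Poincar\'e pairing, this identity descends to $X\in\overline R_\bC(\bH)$. For well-definedness on $F_0H_0(\bH)$: if $A\in F_{-1}K_0(\bH)_\bC$ then $[A,X]=0$ for every irreducible $X$, forcing $\Phi_\lambda(A)=0$ by non-degeneracy and hence $\Phi(A)=0$. For injectivity: if $\Phi(A)=0$ in $\overline R_\bC(\bH)$, then $\Phi(A)$ lies in the radical, so the identity gives $[A,X]=0$ for all irreducibles, i.e.~$A\in F_{-1}K_0(\bH)_\bC$. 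The main obstacle is the non-degeneracy of the block Euler-Poincar\'e pairing; everything else is a formal consequence of the projective-vanishing identity together with the definition of $\Phi$.
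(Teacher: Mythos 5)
Your approach differs from the paper's (which descends the rank pairing to a nondegenerate pairing on $F_0H_0(\bH)\times\overline R_\bC(\bH)$ and then composes with the isomorphism $\overline R_\bC(\bH)\cong\overline R_\bC(\bH)^*$ given by the Euler--Poincar\'e form), but it has a genuine gap at the step you yourself flag as the crux: the restriction of $\langle\cdot,\cdot\rangle^\EP_\bH$ to $\mathrm{span}_\bC(\mathsf{Irr}_{W\lambda}(\bH))$ inside $R_\bC(\bH)$ is \emph{not} non-degenerate in general, so the element $\Phi_\lambda(A)$ you want to solve for need not exist. Concretely, take $R=A_1$ and the central character $W\lambda$ at which the minimal principal series reduces: $\mathsf{Irr}_{W\lambda}(\bH)=\{\triv,\mathrm{St}\}$, and since $\triv+\mathrm{St}=\ind_{\bH_\emptyset}^{\bH}(\bC_\lambda)$ lies in the radical of $\langle\cdot,\cdot\rangle^\EP_\bH$, one computes (e.g.\ via the isometry $\rr$ and the elliptic pairing for $W=\bZ/2\bZ$) that the Gram matrix on this block is $\begin{pmatrix}1&-1\\-1&1\end{pmatrix}$, which is singular. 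Cross-block $\Ext$-vanishing and any form of $\Ext$-duality cannot remove this degeneracy, because the radical of the global Euler--Poincar\'e form already intersects individual central-character blocks nontrivially whenever an induced module is supported there.

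To make your construction of $\Phi_\lambda(A)$ go through, you would need to know that the linear functional $X\mapsto[A,X]$ annihilates the radical of the block pairing, and the only reasonable way to establish that is to show directly that $[A,X]=0$ whenever $X$ is parabolically induced and $A\in F_0K_0(\bH)_\bC$. That is precisely the nontrivial step in the paper's proof: one deforms the inducing datum to a one-parameter family $\sigma_\chi$, notes $[A,\bH\otimes_{\bH_P}\sigma_\chi]\neq 0$ persists, and concludes $\dim\supp(A)>0$, a contradiction. Once you have this vanishing, your block-by-block construction collapses into the paper's argument (the rank pairing descends to $F_0H_0(\bH)\times\overline R_\bC(\bH)$, is nondegenerate on the left, and $\Phi$ is obtained from the nondegenerate Euler--Poincar\'e form on the finite-dimensional quotient $\overline R_\bC(\bH)$); the block decomposition is then unnecessary scaffolding. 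The remaining parts of your write-up (the identity $[P,X]=\langle P,X\rangle^\EP_\bH$ for projective $P$, finiteness of $\supp(A)$, well-definedness modulo $F_{-1}$, and the injectivity argument) are fine.
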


\begin{proof}
Firstly, the rank pairing descends to a pairing $[~,~]: F_0H_0(\bH)\times R_\bC(\bH)\to \bC,$ which is nondegenerate on the left by  (\ref{e:rknonleft}). Secondly, let $A\in F_0H_0(\bH)$ be given, and suppose that $X=\bH\otimes_{\bH_P}\sigma$ is a parabolically induced module, $\sigma\in\mathsf{Irr}(\bH_P)$ such that $[A,X]\neq 0.$ We can form the family $\{\sigma_\chi=\sigma\otimes\chi:\chi\text{ character of }Z(\bH_P)\}\subset\mathsf{Irr}(\bH_P)$. Then $[A,X_\chi]\neq 0$, for all $X_\chi= \bH\otimes_{\bH_P}\sigma_\chi$. But, by definition, this implies that the central characters of $X_\chi$ are all in $\supp(A)$, and therefore $\dim\supp(A)>0,$ a contradiction. Hence, $[A,X]=0$ for every proper parabolically induced  module $X$. 

This implies that the rank pairing descends to a pairing $F_0H_0(\bH)\otimes \overline R_\bC(\bH)\to \bC.$ Because this is nondegenerate on the left, it defines an injective linear map
\begin{equation}
\Phi': F_0H_0(\bH)\to \overline R_\bC(\bH)^*,\quad \Phi'(A)=[A,\cdot]: \overline R_\bC(\bH)\to \bC.
\end{equation}
Using the nondegenerate Hermitian pairing 
$\langle~,~\rangle^\EP_\bH: \overline R_\bC(\bH)\times \overline R_\bC(\bH)\to \bC,$
 we get an injection 
\begin{equation}
\iota: \overline R_\bC(\bH)\to \overline R_\bC(\bH)^*,\quad \iota(X)=\langle \cdot, X\rangle^\EP_\bH,\ X\in \overline R_\bC(\bH).
\end{equation}
Since $\overline R_\bC(\bH)$ is finite-dimensional, $\iota$ is a bijection. Define 
\begin{equation}
\Phi=\iota^{-1}\circ\Phi': F_0H_0(\bH)\to \overline R_\bC(\bH).
\end{equation}
It is clear from the definitions that $[A,X]=\langle \Phi(A),X\rangle^\EP_\bH,$ for all $A\in F_0H_0(\bH),$ $X\in \overline R_\bC(\bH)$.
\end{proof}

\subsection{The Dirac induction map $\Ind_D$}

\begin{definition}\label{d:indmap}
The Dirac induction map $\Ind_D:\overline R_\bC(W)\to \overline R_\bC(\bH)$ is the linear map defined on every $\delta\in\overline R_\bZ(W)$  by
\begin{equation}
\Ind_D(\delta)=\Phi([\bH\otimes_{W'}((\wti\delta^+)^*\otimes S^+)]-[\bH\otimes_{W'}((\wti\delta^+)^*\otimes S^-)]),
\end{equation}
where $\Phi$ is the map from Lemma \ref{l:F0H0} and $\delta^+$ is the $\wti W'$-representation from Theorem \ref{t:CT}(2). 
\end{definition}

\begin{theorem}\label{t:ind}
\begin{enumerate}
\item The map $\Ind_D$ from Definition \ref{d:indmap} is well-defined, i.e., $[\bH\otimes_{W'}((\wti\delta^+)^*\otimes S^+)]-[\bH\otimes_{W'}((\wti\delta^+)^*\otimes S^-)]\in F_0H_0(\bH).$
\item For every $\delta\in \overline R_\bZ(W),$ 
\begin{equation}
\langle\Ind_D(\delta),X\rangle^\EP_\bH=\langle\wti\delta^+,I(X)\rangle_{\wti W'},
\end{equation}
where $I(X)$ is the Dirac index of $X$.
\item For every $X\in \overline R_\bC(\bH)$ and $\delta\in \overline R_\bC(W),$ 
\begin{equation}
\langle\Ind_D(\delta),X\rangle^\EP_\bH=\langle\delta,\rr(X)\rangle^\el_W.
\end{equation}
\end{enumerate}
\end{theorem}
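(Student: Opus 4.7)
The plan is to funnel all three claims through a single Frobenius-type identity $[A,X]=\langle\wti\delta^+,I(X)\rangle_{\wti W'}$ valid for every finite-dimensional $\bH$-module $X$, where $A := [\bH\otimes_{W'}((\wti\delta^+)^*\otimes S^+)]-[\bH\otimes_{W'}((\wti\delta^+)^*\otimes S^-)]$ is the virtual projective appearing in Definition \ref{d:indmap}. Once this identity is established, part (1) will follow from Vogan's conjecture constraining the support of $A$, part (2) will follow by applying Lemma \ref{l:F0H0} to promote the rank pairing, and part (3) will reduce to Theorem \ref{t:CT} via the $\Sg$-involution.

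To establish the key identity, I would first apply Frobenius reciprocity for the inclusion $\bC[W']\hookrightarrow\bH$ (i.e., Remark \ref{r:Wrkproj} with $W$ replaced by $W'$): $[\bH\otimes_{W'}M, X] = \dim\Hom_{W'}(M, X|_{W'})$ for any $W'$-module $M$. Applied with $M = (\wti\delta^+)^*\otimes S^\pm$ (which indeed descends to a $W'$-module, as both factors are genuine), standard tensor-Hom adjunction combined with the self-duality of $X|_W$ (valid since every complex irreducible representation of a real reflection group is defined over $\bR$) will convert the right-hand side into $\dim\Hom_{\wti W'}(\wti\delta^+, X\otimes S^\pm)$. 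Taking the difference in $\pm$ and invoking Lemma \ref{l:CT} then yields $[A,X]=\langle\wti\delta^+,I(X)\rangle_{\wti W'}$.

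With the key identity in hand, part (1) is immediate: if $X=\bH\otimes_{\bH_P}\sigma$ is parabolically induced from a proper parabolic subalgebra, then $I(X)=0$ by (\ref{e:vanishinduced}), so $[A,X]=0$; on the other hand, if $X$ is irreducible with $[A,X]\neq 0$, then some irreducible summand $\wti\eta\subset\wti\delta^+$ must occur in $H_D^\pm(X)$, and Corollary \ref{c:voganchar} will force the central character of $X$ to equal $\chi^{\wti\eta}$. Since $\wti\delta^+$ has only finitely many irreducible summands, $\supp(A)$ is finite, hence zero-dimensional, so $A\in F_0K_0(\bH)_\bC$ descends to a class in $F_0H_0(\bH)$ and $\Ind_D(\delta)=\Phi(A)$ is well-defined. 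Part (2) is then immediate from Lemma \ref{l:F0H0}, which gives $\langle\Phi(A),X\rangle^\EP_\bH=[A,X]$, combined with the key identity.

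For part (3), Lemma \ref{l:CT} gives $I(X)=i(\rr(X))$. Writing $\sigma=\rr(X)$ and $i(\sigma)=\wti\sigma^+-\wti\sigma^-$ with $\wti\sigma^-=\Sg(\wti\sigma^+)$ via Theorem \ref{t:CT}(2), the $\Sg$-invariance of the pairing together with $\Sg\circ i=-i$ gives $\langle\wti\delta^+,i(\sigma)\rangle_{\wti W'}=\langle\wti\delta^+,\wti\sigma^+\rangle-\langle\Sg(\wti\delta^+),\wti\sigma^+\rangle=\langle i(\delta),\wti\sigma^+\rangle$; the same trick applied again yields $\langle i(\delta),\wti\sigma^+\rangle=-\langle i(\delta),\wti\sigma^-\rangle$, so $\langle i(\delta),\wti\sigma^+\rangle=\tfrac12\langle i(\delta),i(\sigma)\rangle=\langle\delta,\sigma\rangle^\el_W$ by Theorem \ref{t:CT}(1). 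Combined with (2) this establishes (3). The hard part will be the Frobenius identity itself: its derivation is elementary but requires careful attention to the interplay between genuine and ungenuine characters of $\wti W'$ and to the self-duality of Weyl (or more generally real reflection) group representations; once that is secured, the rest of the proof is essentially formal.
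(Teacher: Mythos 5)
Your proposal follows essentially the same route as the paper's proof: establish the rank-pairing identity $[A,X]=\langle\wti\delta^+,I(X)\rangle_{\wti W'}$ by Frobenius reciprocity for $\bC[W']\hookrightarrow\bH$, tensor-Hom adjunction, and Lemma \ref{l:CT}; deduce zero-dimensional support of $A$ from the vanishing of $I$ on parabolically induced modules; get (2) from Lemma \ref{l:F0H0}; and finish (3) with the $\Sg$-involution and Theorem \ref{t:CT}(1). The one minor point where you are more explicit than the paper: the paper passes from $X\otimes(S^\pm)^*$ to $I(X)^*$ tacitly, which (as you note) uses the self-duality of $X|_{W'}$ (inherited from $X|_W$, as $W$ is a real reflection group); and for (1) the paper simply observes that the support lies among central characters of elliptic tempered modules, whereas you invoke Corollary \ref{c:voganchar}---both routes give finiteness. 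These are cosmetic differences, not a different argument.
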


\begin{proof} Let $X$ be a finite-dimensional $\bH$-module. Let $\delta$ be an element of $\overline R_\bZ(\bH)$. Using (\ref{e:Wrkproj}), we have the rank pairings:
$$[\bH\otimes_{W'}((\wti\delta^+)^*\otimes S^\pm),X]=\dim\Hom_{W'}((\wti\delta^+)^*\otimes S^\pm,X)=\dim\Hom_{\wti W'}((\delta^+)^*,X\otimes (S^\pm)^*).$$
Therefore, using also Lemma \ref{l:CT}:
\begin{equation}\label{e:eq2}
\begin{aligned}
&[\bH\otimes_{W'}((\wti\delta^+)^*\otimes S^+),X]-[\bH\otimes_{W'}((\wti\delta^+)^*\otimes S^-),X]
&=\dim\Hom_{\wti W'}((\delta^+)^*,I(X)^*)\\
&=\langle\wti\delta^+,I(X)\rangle_{\wti W'}.\\
\end{aligned}
\end{equation}
Since $I(X)=0$ whenever $X$ is a proper parabolically induced module
(\ref{e:vanishinduced}), we see that the support of
$[\bH\otimes_{W'}((\wti\delta^+)^*\otimes
S^+),X]-[\bH\otimes_{W'}((\wti\delta^+)^*\otimes S^-),X]$ is a subset
of the set of central characters of elliptic tempered $\bH$-modules. Then (1) follows.

Claim (2) is immediate from (\ref{e:eq2}) and Lemma \ref{l:F0H0}.

For claim (3) let $\delta$ be an element of $\overline R_\bZ(W)$ and $X$ an element of $\overline R_\bZ(\bH)).$ Using Theorem \ref{t:CT}(1), 
\begin{equation}
\begin{aligned}
\langle\delta,X|_W\rangle^\el_W&=\frac 12\langle i(\delta),i(X|_W)\rangle_{\wti W'}=\frac 12\langle i(\delta),I(X)\rangle_{\wti W'}\\
&=\frac 12\langle \wti\delta^+-\wti\delta^-,I(X)\rangle_{\wti W'}
=\langle\wti\delta^+,I(X)\rangle_{\wti W'}.
\end{aligned}
\end{equation}
For the last equality, we used the involution $\Sg$ of (\ref{e:Sg}) to see that 
\begin{equation*}
\langle\wti\delta^+,I(X)\rangle_{\wti W'}=\langle\Sg(\wti\delta^+),\Sg(I(X))\rangle_{\wti W'}=\langle\wti\delta^-,-I(X)\rangle_{\wti W'}.
\end{equation*}
\end{proof}

\begin{definition}\label{d:pure} Let $(\C X,\langle~,~\rangle)$ be a $\bZ$-lattice in an Euclidean space $(E,\langle~,~\rangle)$. A vector $v\in \C X$ is called pure if $v$ cannot be written as a  sum $v=v_1+v_2$, where $v_1,v_2\in \C X\setminus \{0\}$ and $\langle v_1,v_2\rangle=0.$
\end{definition}

Denote
\begin{equation}\label{e:Y}
\C Y=\rr(\overline R_\bZ(\bH))\subset \overline R_\bZ(W),
\end{equation}
a $\bZ$-lattice with respect to the pairing $\langle~,~\rangle^\el_W$ in $\overline R_\bC(W).$ Notice that a priori, the lattice $\C Y$ depends on the parameter function $k$ of $\bH.$

\begin{corollary}\label{c:indmap}
\begin{enumerate}
\item $\Ind_D$ is the inverse map of $\rr$. 
\item For every $\delta\in \overline R_\bC(W)$, the Dirac
  index of $\Ind_D(\delta)$ is
\begin{equation}
I(\Ind_D(\delta))=i(\delta).
\end{equation}
In particular, $I(\Ind_D(\delta))$ is independent of the parameter function $k$ of $\bH$.
\item If $\delta$ is a pure element (in the sense of Definition \ref{d:pure})  of the lattice $\C Y$ defined in (\ref{e:Y}), then $\Ind_D(\delta)$ is supported by a single central character.
\item If a rational multiple of $\delta\in \overline R_\bZ(W)$ is pure in $\C Y$, then the central character of $\Ind_D(\delta)$ equals $\chi^{\wti\delta}$ (see Definition \ref{d:voganchar}) for any irreducible $\wti W'$-representation $\wti\delta$ occuring in $i(\delta)$.
\end{enumerate}
\end{corollary}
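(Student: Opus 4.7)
For part (1), the plan is to read Theorem \ref{t:ind}(3) as the statement that $\Ind_D$ is the Hermitian adjoint of $\rr$ with respect to the pairings $\langle\cdot,\cdot\rangle^\EP_\bH$ and $\langle\cdot,\cdot\rangle^\el_W$. Corollary \ref{c:resmap} already says that $\rr$ is an isometric isomorphism between the finite-dimensional Hermitian spaces $\overline R_\bC(\bH)$ and $\overline R_\bC(W)$, and the adjoint of such an isometry is its inverse. For part (2), Lemma \ref{l:CT} gives $I(X) = X \otimes (S^+ - S^-)$ as virtual $\wti W'$-modules, so $I(X)$ depends on $X$ only through $X|_W = \rr(X)$; hence $I(X) = i(\rr(X))$. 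Combined with (1), this yields $I(\Ind_D(\delta)) = i(\rr(\Ind_D(\delta))) = i(\delta)$, whose right-hand side is built purely from the intrinsic $\wti W'$-modules $S^\pm$ and is therefore independent of $k$.

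For part (3), by (1) we have $\Ind_D(\delta) \in \overline R_\bZ(\bH)$. Because parabolic induction from $\bH_P$ preserves central character (via restriction along $Z(\bH)\subset Z(\bH_P)$), the central character decomposition of $R_\bZ(\bH)$ descends to $\overline R_\bZ(\bH)$. Write $\Ind_D(\delta) = \sum_\chi X_\chi$ with each $X_\chi \in \overline R_\bZ(\bH)$ having central character $\chi$. Since Ext-groups between $\bH$-modules with distinct central characters vanish, the $X_\chi$ are pairwise $\langle\cdot,\cdot\rangle^\EP_\bH$-orthogonal, and by the isometry property of $\rr$ the vectors $\rr(X_\chi)\in\C Y$ are pairwise $\langle\cdot,\cdot\rangle^\el_W$-orthogonal. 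Purity of $\delta=\sum_\chi\rr(X_\chi)$ then forces at most one summand to be nonzero, so $\Ind_D(\delta)$ is supported at a single central character.

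For part (4), choose $r\in\bQ$ such that $r\delta$ is pure in $\C Y$. Since $\Ind_D(r\delta) = r\Ind_D(\delta)$ has the same central-character support as $\Ind_D(\delta)$, part (3) shows $\Ind_D(\delta)$ is supported at a single central character $\Lambda$. Writing $\Ind_D(\delta) = \sum_i n_i[X_i]$ with $X_i$ irreducible of central character $\Lambda$, part (2) gives $i(\delta) = \sum_i n_i\, I(X_i)$ as virtual $\wti W'$-characters. For any irreducible constituent $\wti\delta$ of $i(\delta)$, the multiplicity $\sum_i n_i[I(X_i):\wti\delta]$ is nonzero, so for at least one index $i$ the representation $\wti\delta$ occurs in $H^+_D(X_i)$ or $H^-_D(X_i)$, hence in $H_D(X_i)$. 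Corollary \ref{c:voganchar} then forces the central character of $X_i$ to be $\chi^{\wti\delta}$, and so $\Lambda = \chi^{\wti\delta}$.

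The decisive step will be (4), which rests on Vogan's conjecture (Theorem \ref{t:vogan} and its corollary \ref{c:voganchar}); the other parts are essentially linear-algebraic consequences of the fact, already established, that $\rr$ is an isometric isomorphism, together with central-character orthogonality under $\langle\cdot,\cdot\rangle^\EP_\bH$. A minor technical point worth checking carefully is the compatibility of the central character decomposition of $R_\bZ(\bH)$ with the elliptic quotient, which is ensured by the observation that parabolic induction preserves the central character.
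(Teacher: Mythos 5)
Your proposal is correct and follows essentially the same route as the paper: part (1) via Theorem \ref{t:ind}(3) plus nondegeneracy/isometry, part (2) by combining (1) with Lemma \ref{l:CT}, part (3) by decomposing $\Ind_D(\delta)$ by central character and using that $\langle\cdot,\cdot\rangle^\EP_\bH$-orthogonality holds across distinct central characters to contradict purity, and part (4) by invoking part (2) and Corollary \ref{c:voganchar}. Your remark checking that parabolic induction preserves central character, so the decomposition descends to $\overline R_\bZ(\bH)$, is a worthwhile detail that the paper leaves implicit but does not represent a different approach.
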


\begin{proof}
Claim (1) is immediate by Theorem \ref{t:ind}(3). Namely, given $X\in \overline R_\bC(\bH)$: 
\begin{equation}
\langle\Ind_D(\rr(X)),Y\rangle^\EP_\bH=\langle \rr(X),\rr(Y)\rangle^\el_W=\langle X,Y\rangle^\EP_\bH,
\end{equation}
for every $Y\in \overline R_\bC(\bH)$. Since $\langle~,~\rangle^\EP_\bH$ is nondegenerate on $\overline R_\bC(\bH)$, it follows that $\Ind_D(\rr(X))=X$ in $\overline R_\bC(\bH)$.

By Lemma \ref{l:CT}, $I(\Ind_D(\delta))=i(\rr(\Ind_D(\delta))).$ Claim
(2) now follows from (1).

For (3), the restriction $\Ind_D|_{\C Y}:\C Y\to \overline R_\bZ(\bH)$
is an isometric isomorphism and an inverse of $\rr: \overline
R_\bZ(\bH)\to \C Y.$ Given $\delta\in \C Y$, decompose
$\Ind_D(\delta)=X_1+\dots+X_\ell$ in $\overline R_\bZ(H)$, where:
\begin{enumerate}
\item[(a)] $X_i$ is in the class of an integral virtual $\bH$-module;
\item[(b)] $X_i$ has central character $\chi_i$;
\item[(c)] the central characters $\chi_i$ are mutually distinct.

\end{enumerate}
By (c), $\langle X_i,X_j\rangle^\EP_\bH=0$ for all $i\neq j.$ Then $\delta=\rr(\Ind_D(\delta))=\rr(X_1)+\dots+\rr(X_\ell)$ is an orthogonal decomposition of $\delta$ in $\C Y$. Since $\delta$ is pure, $\ell=1$ by definition.

To prove (4), notice that (3) implies that $\Ind_D(\delta)$ is supported by a single central character. On the other hand, if $\wti\delta$ occurs in $i(\delta)$, then $\langle \wti \delta,I(\Ind_D(\delta))\rangle_{\wti W'}\neq 0$. This means that $\wti\delta$ occurs in (one of) the Dirac cohomology groups $H_D(\Ind_D(\delta))$, so the claim follows from Corollary \ref{c:voganchar}.
\end{proof}

\begin{remark}
\begin{enumerate}
\item[(a)] Corollary \ref{c:indmap}(3) says in particular that if a rational multiple of $\delta\in \overline R_\bZ(W)$ is pure in $\C Y$, then for all $\wti W'$-irreducible constituents $\wti \delta$ of $i(\delta)$, the central characters $\chi^{\wti\delta}$ are the same. 
Define the central character of $\Ind_D$ to be
\begin{equation}
\cc(\Ind_D(\delta))=\chi^{\wti\delta},
\end{equation}
for every  $\delta\in\overline R_\bZ(W)$ which is a rational multiple of a pure element in $\C Y.$
\item[(b)] Every Euclidean lattice $L$ has a basis consisting of pure
  vectors. Recall that if $\C B$ is a basis for $L$, then the
  orthogonal defect of $\C B$ is defined as the ratio
  $\text{def}(\C B)=\prod_{x\in\C B}||x||/\text{vol}(L)$. 
Then every
  basis with minimal orthogonal defect consists of pure vectors.  

In our situation, this implies that $\C Y$ has a basis of pure
elements, and therefore that every element of $\overline R_\bZ(W)$ can
be written as a sum of elements of $\overline R_\bZ(W)$ which are
rational multiples of pure vectors in $\C Y$.

\item[(c)] We will improve Corollary \ref{c:indmap}(4) in Corollary \ref{c:puresupport}.
\end{enumerate}
\end{remark}

\subsection{The central character map} 
Let $\bH_A$ be the generic Hecke algebra over $A=\bC[\underline k]$ from Definition \ref{d:graded}.
\begin{definition}\label{d:residue}
Let $\Res^\lin(R)$ be the set of linear maps $\xi:\Spec(A)=\bC^{W\backslash
  R}\to V_\bC^\vee$ such that for almost all $k\in \bC^{W\backslash R}$, the point $\xi(k)$ satisfies the condition
\begin{equation}\label{e:bala}
\#\{\al\in R: \al(\xi(k))=k_\al\}=\#\{\al\in R: \al(\xi(k))=0\}+\dim V_\bC^\vee.
\end{equation}
\end{definition}

The set $\Res^\lin(R)$ is studied in \cite{HO,O,O2}, where it is shown that $\Res^\lin(R)$ is a nonempty, finite set, invariant under $W$, and its explicit description is given in all irreducible cases. Notice that when $k$ is a constant function, condition (\ref{e:bala}) is satisfied by the middle elements of distinguished Lie triples \cite{Ca}.

We will make repeated use of the following results. Assume the root
system $R$ does not have simply-laced factors and recall the notion
of generic parameter $k$ from \cite[section 4.3]{O} and
\cite[Definition 2.64]{OS1}.

\begin{theorem}[{\cite[Theorem 5.3, Definition 5.4, equation
    (79)]{OS1}}]\label{t:genericdiscrete} Suppose $R$ does not have simply laced factors, and
  let $\C Q$ be a generic region for the parameters $k$. 
\begin{enumerate}
\item If $\pi$ is
  an irreducible discrete series $\bH_k$-module, there exists
  $\lambda\in \Res^\lin(R)$ such that the central character of $\pi$
  is $\lambda(k).$

\item Conversely, if $\lambda\in\Res^\lin(R)$, for every $k\in\C Q$ there
exists a unique irreducible discrete series $\bH_k$-module $\pi_k$
with central character $\lambda(k).$

\end{enumerate}
\end{theorem}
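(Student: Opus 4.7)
The plan is to follow the standard Opdam--Solleveld residue-calculus approach, in which central characters of discrete series modules are identified with residual points of the Plancherel measure and then linearized over the generic parameter region $\C Q$.

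First, I would recall the general principle (from Opdam's Plancherel theory for $\bH_k$, extended to unequal parameters in \cite{O,O2,OS1}) that the central character $W\lambda$ of any irreducible discrete series module of $\bH_k$ must be \emph{residual}: that is, the anti-symmetric $c$-function acquires a pole at $\lambda$ of maximal order equal to $\dim V_\bC^\vee$, and the balance condition
\begin{equation*}
\#\{\al\in R:\al(\lambda)=k_\al\}\ =\ \#\{\al\in R:\al(\lambda)=0\}+\dim V_\bC^\vee
\end{equation*}
holds. Once this principle is in hand, part (1) of the theorem reduces to the observation that on the generic region $\C Q$, the $W$-orbits of residual points admit a linear parametrization: each irreducible component of the affine variety cut out by the residual equations is a line through the origin with respect to $k$, and its slope is precisely an element $\xi\in \Res^\lin(R)$. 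Thus for $k\in\C Q$ the residual set is exactly $\{\xi(k) : \xi\in \Res^\lin(R)\}$ (a result carried out case-by-case, using that $R$ has no simply-laced factors so that the parameter function is nonconstant and the residual configurations are stable under small perturbations of $k$).

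Second, for part (2) I would construct, for each $\lambda\in\Res^\lin(R)$ and each $k\in\C Q$, a nonzero discrete series $\pi_k$ with central character $\lambda(k)$ by applying Opdam's residue calculus to a principal series representation induced from a regular parameter: shrinking a tempered integration cycle past the singularities of the $c$-function produces a residual summand localized at $\lambda(k)$, and by the Casselman criterion this summand consists of discrete series modules. Uniqueness then follows from a continuity-plus-counting argument: the sum of formal degrees of discrete series with central character $\lambda(k)$ is a continuous function of $k$ along $\C Q$ by the Euler--Poincar\'e rigidity of the elliptic theory (cf.\ Section \ref{s:EPH}), while on a dense subset of $\C Q$ the residual points $\lambda(k)$ are generic enough that a Langlands-type classification forces this sum to consist of a single summand.

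The main obstacle is the uniqueness assertion in (2). Showing that exactly \emph{one} irreducible discrete series occurs at each generic residual central character, rather than a multiplicity $>1$, requires more than the abstract existence from residue calculus: one needs an integrality/positivity argument on formal degrees combined with a deformation principle along $\C Q$. The no-simply-laced-factor hypothesis is exactly what rules out the degenerate case $k\equiv 1$ (where discrete series can merge), allowing one to propagate the count from a boundary stratum where an elliptic Langlands classification is available. The remaining bookkeeping---matching $|\Res^\lin(R)/W|$ with the dimension of the discrete part of the elliptic quotient $\overline{R}_\bC(\bH_k)$---would be carried out case-by-case using Theorem \ref{t:CT} and the explicit tables of $\Res^\lin(R)$ in \cite{HO,O,OS1}.
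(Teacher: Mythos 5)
The paper does not prove Theorem~\ref{t:genericdiscrete}: it is quoted, with explicit attribution, from \cite[Theorem 5.3, Definition 5.4, eq.\ (79)]{OS1}, so there is no internal argument in the present paper against which to compare your proposal. Your sketch does follow the broad contours of how the result is actually established across \cite{HO}, \cite{O}, \cite{O2}, and \cite{OS1}: central characters of discrete series are residual points of the Plancherel density; on a generic region $\C Q$ the residual points are exactly the values of the linear maps in $\Res^\lin(R)$; residue calculus produces a discrete summand at each $\lambda(k)$; and a deformation argument gives uniqueness. So the route you propose is the same route the cited authors take, and it is correct in outline.

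That said, as a proof it is a roadmap rather than an argument, and the place where it is thinnest is precisely the hardest part. The multiplicity-one assertion in (2) is the central technical achievement of \cite{OS1}; it is not obtained from ``a Langlands-type classification on a dense subset of $\C Q$'' together with continuity of formal degrees. Rather, one constructs \emph{generic families} of discrete series $\{\pi^{\C F}_t\}$ in the sense of \cite[Definition 3.5]{OS1} (the very device this paper reuses in Section~\ref{s:nonsimplylaced}), shows these families are parametrized bijectively by $W$-orbits in $\Res^\lin(R)$, and then uses the orthonormality of discrete series in the Euler--Poincar\'e pairing (Corollary~\ref{c:EPds}) together with a count of elliptic conjugacy classes to rule out extra summands. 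The role of the no-simply-laced hypothesis is also slightly different from how you describe it: for an irreducible simply-laced $R$ the parameter space is one-dimensional, so there is no nontrivial generic region and the Kazhdan--Lusztig classification at $k\equiv 1$ produces several discrete series sharing a single central character whenever a distinguished nilpotent has nontrivial component group; this is an obstruction to uniqueness, not a ``degeneration'' that one deforms away from. In short: your sketch is consistent with the cited proof and correctly identifies its ingredients, but the content you defer to ``bookkeeping'' is where the real work of \cite{OS1} lies.
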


Combining Theorem \ref{t:genericdiscrete} with the results of
\cite{OS} on the Euler-Poincar\'e pairing as recalled in sections
\ref{s:EP} and \ref{s:EPH}, one has the following result.

\begin{theorem}[\cite{OS,OS1}]\label{t:EPgeneric}
Suppose $R$ does not have simply laced factors, and $k$ is a generic
parameter for $\bH$. Then the set of irreducible discrete series
modules $\DS(\bH)$ is an orthonormal basis for $\overline R_\bZ(\bH).$
\end{theorem}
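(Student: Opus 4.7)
My plan is to combine Theorem \ref{t:genericdiscrete} with the Euler-Poincar\'e pairing results from Sections \ref{s:EP}--\ref{s:EPH}, as hinted in the sentence preceding the theorem.

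Orthonormality in $\overline R_\bC(\bH)$ is immediate from Corollary \ref{c:EPds}: for $X,Y\in\DS(\bH)$ one has $\langle X,Y\rangle^\EP_\bH=1$ if $X\cong Y$ and $0$ otherwise. In particular each $[X]\in\overline R_\bC(\bH)$ is nonzero, and $\{[X]:X\in\DS(\bH)\}$ is $\bC$-linearly independent.

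For spanning I would count dimensions. By Corollary \ref{c:resmap}, $\dim_\bC \overline R_\bC(\bH)=\dim_\bC\overline R_\bC(W)$, and the latter equals the number of elliptic conjugacy classes of $W$. By Theorem \ref{t:genericdiscrete}, for generic $k$ the central character map induces a bijection $\Res^\lin(R)/W\leftrightarrow\DS(\bH_k)$, so $|\DS(\bH)|=|\Res^\lin(R)/W|$. Combined with the orthonormality above, it then suffices to verify the equality $|\Res^\lin(R)/W|=\#\{\text{elliptic conjugacy classes in }W\}$. I expect this equality to be the main obstacle: under the hypothesis that $R$ has no simply-laced factors, I would reduce to irreducible factors and match the explicit description of $\Res^\lin(R)$ from \cite{HO,O,O2} against the classification of elliptic Weyl group classes (e.g.\ from Carter). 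Granting this, $\DS(\bH)$ becomes an orthonormal $\bC$-basis of $\overline R_\bC(\bH)$.

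Finally, to promote this to a $\bZ$-basis of $\overline R_\bZ(\bH)$, I would use that $\langle\cdot,\cdot\rangle^\EP_\bH$ is integer-valued on $R_\bZ(\bH)\times R_\bZ(\bH)$: the coordinates of any $[X]\in\overline R_\bZ(\bH)$ with respect to the orthonormal basis $\{[\pi]:\pi\in\DS(\bH)\}$ are the integers $\langle X,\pi\rangle^\EP_\bH$. A more conceptual alternative for the spanning step would invoke the Langlands classification---standard modules induced from proper parabolics vanish in $\overline R_\bC(\bH)$, and every tempered-but-not-discrete module is itself parabolically induced from a discrete series of a proper parabolic subalgebra---thereby avoiding the case-by-case matching; I record this as a backup, though the counting route is the one signaled by the surrounding discussion and makes the generic hypothesis on $k$ enter precisely through Theorem \ref{t:genericdiscrete}(2).
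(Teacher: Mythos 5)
Your main route is essentially the one the paper intends: it gives no proof but states that the theorem follows by "combining Theorem~\ref{t:genericdiscrete} with the results of \cite{OS} on the Euler--Poincar\'e pairing as recalled in sections~\ref{s:EP} and~\ref{s:EPH}," which is precisely your orthonormality-plus-dimension-count argument, and the equality $|W\backslash\Res^\lin(R)|=\#\{\text{elliptic classes in }W\}$ that you flag as the "main obstacle" is exactly the case-by-case content supplied by \cite{OS1}. Two small remarks. First, Corollary~\ref{c:resmap} alone says only that $\rr$ is an isometry (hence injective, since $\langle\cdot,\cdot\rangle^\el_W$ is positive definite); the equality of dimensions needs the Opdam--Solleveld theorem's assertion (part~(iii) as quoted in section~\ref{s:EP}) that $\tau_\el$ is a linear \emph{isomorphism}, transported across $\overline\eta$. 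Second, your backup argument via the Langlands classification is not quite right as stated: a tempered-but-not-discrete irreducible is in general only a \emph{direct summand} of $\ind_{\bH_P}^{\bH}(\sigma)$ for $\sigma$ a discrete series of a proper $\bH_P$, not equal to it, so it need not vanish in $\overline R_\bC(\bH)$; to make that route work one would additionally have to know that for generic $k$ (and $R$ without simply-laced factors) such induced modules are irreducible, i.e.\ that the relevant $R$-groups are trivial, which is itself a nontrivial Opdam--Solleveld input. As a backup it therefore does not actually avoid appealing to their case analysis, so the counting route you lead with is the right one.
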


Motivated by the results of the previous section, let $\C S(\overline
R_\bZ(W),\C Y)$ be the set of elements $\delta\in \overline R_\bZ(W)$ which are rational multiples of pure elements of $\C Y$. Define
\begin{equation}\label{e:ccmap1}
\Lambda:\C S(\overline R_\bZ(W),\C Y)\to \Spec(Z(\bH_A)),\ 
\Lambda(\delta)(k)=\cc(\Ind_{D,k}(\delta))\in
\Spec(Z(\bH_k))=W\backslash V_\bC^\vee.
\end{equation}
Using Definition \ref{d:irr0} and Corollary \ref{c:indmap}(4), we see that
\begin{equation}
\im\Lambda=\{\chi^{\wti\delta}: \wti\delta\in\Irr^0_\gen(\wti W')\}.
\end{equation}

\begin{theorem}\label{t:ccmap}
\begin{enumerate}
\item
Let $\wti\delta\in \Irr_\gen^0(\wti W')$ be given. Then $\chi^{\wti\delta}$ is linear in $\underline k$, i.e., there exists a linear function $\xi(\wti\delta):\Spec(A)\to V_\bC^\vee$ such that $\chi^{\wti\delta}=W\cdot\xi(\wti\delta).$
\item $\Res^\lin(R)$ is contained in $\im\Lambda$,  with equality if $R$ has no simply-laced factors.
\end{enumerate}
\end{theorem}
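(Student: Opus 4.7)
The plan is to combine the graded structure of $\bH_A$ with Theorem \ref{t:vogan} and the discrete series classification of Theorem \ref{t:genericdiscrete} to prove both parts. For part (1), I would first endow $\bH_A$ with a $\bZ_{\geq 0}$-grading in which $V$ and each $\underline k_\al$ have degree $1$ and $W$ has degree $0$. The defining relation $s_\al p - s_\al(p) s_\al = \underline k_\al \Delta_\al(p)$ is compatible with this grading because $\Delta_\al$ lowers $V$-degree by one, so that $Z(\bH_A) = A \otimes_\bC S(V_\bC)^W$ and $A[\wti W]^{\wti W}$ inherit gradings (with $\deg \underline k = 1$ and $\deg \wti W = 0$). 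Since $\wti\xi = \xi - T_\xi$ is homogeneous of combined degree $1$, so is the Dirac element $\C D$. Taking the homogeneous degree-$d$ component of the defining identity $p \otimes 1 = \rho(\zeta(p)) + \C D a + a \C D$ for $p \in S(V_\bC)^W$ of degree $d$ and invoking the uniqueness of $\zeta(p)$ in Theorem \ref{t:vogan} forces $\zeta(p)$ to be homogeneous of degree $d$ in $\underline k$. Consequently $\chi^{\wti\delta}(p)$ is a homogeneous polynomial of degree $d$ in $\underline k$, which says exactly that the morphism $\chi^{\wti\delta}\colon \Spec A \to W\backslash V_\bC^\vee$ is $\bG_m$-equivariant under simultaneous scaling of $\underline k$ and $V_\bC^\vee$.

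To promote this $\bG_m$-equivariance to an actual linear lift $\xi(\wti\delta)$, I would use the generic classification. Assuming first that $R$ has no simply-laced factors, choose $\delta \in \overline R_\bZ(W)$ such that $\wti\delta$ appears in $i(\delta)$; by Corollary \ref{c:indmap}(2) the Dirac index of $\Ind_{D,k}(\delta)$ contains $\wti\delta$, so by Corollary \ref{c:voganchar} some irreducible summand of $\Ind_{D,k}(\delta)$ has central character $\chi^{\wti\delta}_k$. For $k$ in a generic region, Theorem \ref{t:EPgeneric} forces this summand to be a discrete series module, and Theorem \ref{t:genericdiscrete}(1) identifies its central character with $\lambda(k)$ for some $\lambda \in \Res^\lin(R)$. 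Since $\chi^{\wti\delta}$ is polynomial in $\underline k$ and coincides with the linear $\lambda$ on a Zariski-dense subset, we obtain $\chi^{\wti\delta} = W\cdot\lambda$ globally, giving the required linear $\xi(\wti\delta)$. For $R$ with simply-laced factors I would decompose into irreducible factors; in the irreducible simply-laced case $\Spec A$ is one-dimensional and the $\bG_m$-equivariance directly supplies a linear lift $\xi(\wti\delta)(k) = k\lambda_0$ for any representative $\lambda_0$ of $\chi^{\wti\delta}_1$.

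For part (2), the inclusion $\Res^\lin(R) \subseteq \im\Lambda$ proceeds as follows: fix $\lambda \in \Res^\lin(R)$; for generic $k$, Theorem \ref{t:genericdiscrete}(2) produces a discrete series $\pi_k$ of $\bH_k$ with central character $\lambda(k)$, and by Theorem \ref{t:EPgeneric} $\pi_k$ is a basis vector of $\overline R_\bZ(\bH_k)$, so $\rr(\pi_k) \neq 0$ in $\overline R_\bC(W)$. Theorem \ref{t:CT}(1) gives $i(\rr(\pi_k)) \neq 0$, so by Lemma \ref{l:CT} some $\wti\delta \in \Irr_\gen^0\wti W'$ appears in $I(\pi_k)$, hence in $H_D(\pi_k)$. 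Corollary \ref{c:voganchar} identifies $\chi^{\wti\delta}_k = \lambda(k)$, and part (1) upgrades this to $\chi^{\wti\delta} = \lambda$, so $\lambda \in \im\Lambda$. For the reverse inclusion under the no-simply-laced hypothesis, any $\wti\delta \in \Irr_\gen^0\wti W'$ appears in $i(\delta)$ for some $\delta$ that is a rational multiple of a pure element in $\C Y$, and for generic $k$ all constituents of $\Ind_{D,k}(\delta)$ are discrete series with central characters in $\{\lambda(k): \lambda \in \Res^\lin(R)\}$ by Theorems \ref{t:EPgeneric} and \ref{t:genericdiscrete}(1); Corollary \ref{c:indmap}(4) together with part (1) then places $\chi^{\wti\delta}$ in $\Res^\lin(R)$. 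The main obstacle is establishing the grading preservation for $\zeta$ in the first paragraph; once that is in hand, the remaining arguments reduce to bookkeeping with the classification and Vogan-type results already at our disposal.
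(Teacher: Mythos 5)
Your part (1) argument is essentially sound and, in one respect, more uniform than the paper's: the $\bZ_{\ge 0}$-grading on $\bH_A$ with $\deg V = \deg \underline k_\al = 1$, $\deg W = 0$ is indeed compatible with the defining relation (since $\Delta_\al$ drops $V$-degree by one), it makes $\wti\xi$ and hence $\C D$ homogeneous of degree $1$, and taking homogeneous components of $z\otimes 1 = \rho(\zeta(z)) + \C D a + a\C D$ together with the uniqueness in Theorem \ref{t:vogan} forces $\zeta(z)$ to be homogeneous in $\underline k$ of degree $\deg z$. This packages into one principle what the paper obtains by two different mechanisms (the scaling isomorphism $s_c\colon\bH_k\to\bH_{ck}$ for simply-laced $R$, and the formal-completion-at-$k_0$ argument for the rest). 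Your Zariski-density step to spread the equality $\chi^{\wti\delta}_k=\lambda(k)$ from a generic region to all of $\Spec A$ plays the same role as the paper's injectivity of $i_{k_0}\colon\bC[k]\to\widehat{\bC[k]}$; both rest on polynomiality of $\chi^{\wti\delta}$ in $\underline k$, which already follows from Theorem \ref{t:vogan}. Likewise, your abstract argument for the inclusion $\Res^\lin(R)\subseteq\im\Lambda$ when $R$ has no simply-laced factors (fix $\lambda$, take $\pi_k$ from Theorem \ref{t:genericdiscrete}(2), note $\rr(\pi_k)\neq 0$, hence $I(\pi_k)\neq 0$ by Theorem \ref{t:CT}(1) and Lemma \ref{l:CT}, extract $\wti\delta$, apply Corollary \ref{c:voganchar} then part (1)) is valid and avoids the paper's type-by-type verification for $B_n$, $F_4$, $G_2$; what the explicit computations additionally buy are Tables \ref{ta:F4} and \ref{ta:G2}, which your route does not produce.

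There is, however, a genuine gap in your part (2). The inclusion $\Res^\lin(R)\subseteq\im\Lambda$ is asserted for \emph{all} $R$, but your argument rests entirely on Theorem \ref{t:genericdiscrete}, whose hypotheses explicitly exclude simply-laced factors. When $R$ is irreducible simply-laced, $k$ is a single scalar, $\Res^\lin(R)$ at $k=1$ corresponds to middle elements $h/2$ of distinguished Lie triples, and the discrete series realizing those central characters come from the Kazhdan--Lusztig geometric classification, not from $\Res^\lin$-type residue-point counting. The paper disposes of this case by citing \cite[Theorem 5.8]{BCT} (itself relying on Vogan's conjecture and \cite[Theorems 1.0.1, 3.10.3]{C}). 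Your method could in fact be adapted --- for distinguished $e$, the discrete series $\pi_{(\frac12 h,e,\psi)}$ exists by \cite{KL,L2}, has $I(\pi_{(\frac12 h,e,\psi)})\neq 0$ by Corollary \ref{c:EPds} and Theorem \ref{t:CT}, and Corollary \ref{c:voganchar} plus your part (1) finish --- but as written, the proposal simply does not cover this case and cites an inapplicable theorem in its place.
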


\begin{proof}
(1) Let $\delta\in\overline R_\bZ(W)$ be such that $\wti\delta$ is a
component of $i(\delta).$ Since $\C Y$ admits a basis of pure vectors,
we may assume that $\delta$ is pure. The equality $\Lambda(\delta)=\chi^{\wti\delta}$ was verified in Corollary \ref{c:indmap}(4). It is sufficient to prove the linearity of $\Lambda(\delta)$ in the case when $R$ is irreducible. 

Assume that $R$ is simply laced. The generic algebra $\bH_A$ admits scaling isomorphisms $s_c:\bH_k\to \bH_{ck}$, for a scalar $c$, given by $s_c(w)=w$, $w\in W$ and $s_c(v)=cv,$ for $v\in V$. Notice that the Dirac element $\C D$ is unchanged under $s_c.$ For a central element $z\in S(V_\bC)^W$ homogeneous of degree $N$, equation (\ref{e:defzeta}) shows that $\zeta_{ck}(z)$ has degree $N$ in $c$. Therefore, 
$$\zeta_{ck}(z)=c^N\zeta_k(z),\text{ and in particular }\zeta_k(z)=k^N\zeta_1(z).$$
Hence $\chi^{\wti\delta}$ is linear in $k$ for all irreducible $\wti W'$-representations $\wti\delta.$

Now, let $R$ be a simple root system which is not simply laced.
Fix a generic region $\C Q$ of the parameter function $k$, and
$k_0\in\C Q.$
Corollary \ref{c:indmap}(4) and Theorem \ref{t:genericdiscrete} imply that there exists $\lambda\in\Res^\lin(R)$ such that 
$\Lambda(\delta)(k)=\lambda(k)$ for every $k\in\C Q.$ Consider the formal completion $\widehat{\bC[k]}$ of $\bC[k]$ at $k_0,$ and $i_{k_0}:\bC[k]\to\widehat{\bC[k]}$ the canonical map. Since $$\Lambda(\delta)\circ  i_{k_0}=\lambda\circ i_{k_0},$$
as homomorphisms $S(V_\bC)^W\to \widehat{\bC[k]}$, it follows from the injectivity of $i_{k_0}$ that $\Lambda(\delta)=\lambda$.

\

(2) When $k$ is constant (more generally when $k$ is of geometric origin in the sense of \cite{L2}), this is (part of) \cite[Theorem 5.8]{BCT}, which is a  corollary of Vogan's conjecture \ref{t:vogan} together with \cite[Theorems 1.0.1 and 3.10.3]{C}. In particular, this is the case when $R$ is simply-laced. The explicit map $\Lambda$ for these cases can be found in the tables of \cite{C}.

Assume now that $R$ is a simple root system which is not simply laced. By (1), $\Lambda(\delta)$ gives a $W$-orbit of linear maps $\bC^{(W\backslash R)}\to V_\bC^\vee$. So it remains to check condition (\ref{e:bala}) in one generic region of $k$.
To simplify notation, let $k$ be the parameter on the long roots of $R$ and $k'$ the parameter on the short roots.

When $R$ is of type $B_n$, we choose the generic region $k'/k>n-1.$ To every partition $\sigma$ of $n$, \cite{O} attaches an element $\cc_\sigma$ of $\Res^\lin(R)$, and every $W$-conjugacy class in $\Res^\lin(R)$ contains one and only one such $\cc_\sigma.$ Let $\sigma\times\emptyset$ denote the irreducible $W(B_n)$-representation obtained by inflating to $W(B_n)$ the irreducible $S_n$-representation given in Young's parametrization by $\sigma.$ An easy algebraic argument (see \cite[section 4.7]{CK}) shows that there is a unique discrete series module $\pi_\sigma$ with central character $\cc_\sigma$ and $\pi_\sigma|_W\cong (\sigma\times\emptyset)\otimes\sgn.$ It is well-known (see \cite{Rea}) that $(\sigma\times\emptyset)\otimes S$ is an irreducible $\wti W$-representation, for each spin $\wti W$-module $S$, and in fact, every genuine irreducible $\wti W$-representation is obtained in this way. Fix a spin module $S$ and denote $\wti\sigma=(\sigma\times\emptyset)\otimes S\otimes\sgn$. Then
\begin{equation}
I(\pi_\sigma)=\pi_\sigma|_W\otimes (S^+-S^-)=(\sigma\times\emptyset)\otimes (S^+-S^-)\otimes\sgn\neq 0.
\end{equation}
Therefore, $\Hom_{\wti W}[\wti\sigma,H_D(\pi_\sigma)]\neq 0,$ and by Corollary \ref{c:voganchar}, $\chi^{\wti\sigma}=\cc_\sigma.$ This completes the proof in type $B_n.$
 
For types $G_2$ and $F_4$, the notation for genuine $\wti W$-types is as in the character tables of \cite{M}. By (\ref{e:zetaomega}), if $\Lambda(\delta)=\xi\in \Res^\lin(R)$, then $$\langle\xi(k,k'),\xi(k,k')\rangle=\wti\delta(\Omega_{\wti W,k,k'}),$$
as functions in $k,k'.$ 
It turns out that for $F_4$ and $G_2,$ if $\xi\neq\xi'\in W\backslash \Res^\lin(R)$, then $\langle\xi(k,k'),\xi(k,k')\rangle\neq \langle\xi'(k,k'),\xi'(k,k')\rangle$ as functions in $k,k'$. (This is not the case for type $B_n$ discussed above, when $n$ is large.) Therefore, to identify the images of the map $\Lambda$, it is sufficient in this case to compute the scalar functions $\wti\delta(\Omega_{\wti W,k,k'})$ and compare. The results are tabulated in Tables \ref{ta:F4} and \ref{ta:G2}.

\begin{table}[h]
\caption{$\Res^\lin(F_4)$\label{ta:F4}}
\begin{tabular}{|c|c|}
\hline
central character &$\wti\delta\in \Irr_\gen^0(\wti W)/_\sim$\\

\hline
$k\om_1+k\om_2+k'\om_3+k'\om_4$ &$4_s$\\
\hline
$k\om_1+k\om_2+(-k+k')\om_3+k'\om_4$ &$8_{sss}$\\
\hline
$k\om_1+k\om_2+(-k+k')\om_3+k\om_4$ &$12_s$\\
\hline
$k\om_1+k\om_2+(-2k+k')\om_3+k'\om_4$ &$4_{ss}$\\
\hline
$k\om_1+k\om_2+(-2k+k')\om_3+2k\om_4$ &$24_s$\\
\hline
$k\om_1+k\om_2+(-2k+k')\om_3+k\om_4$ &$12_{ss}$\\
\hline
$k\om_1+k\om_2+(-2k+k')\om_3+(3k-k')\om_4$ &$8_{ssss}$\\
\hline
$k\om_2+(-k+k')\om_4$ &$8_{ss}$, $8_s$\\
\hline
\end{tabular}
\end{table}

\begin{table}[h]
\caption{$\Res^\lin(G_2)$\label{ta:G2}}
\begin{tabular}{|c|c|}
\hline
central character &$\wti\delta\in \Irr_\gen^0(\wti W)/_\sim$\\

\hline
$k\om_1+k'\om_2$ &$2_s$\\
\hline
$k\om_1+(-k+k')\om_2$ &$2_{ss}$\\
\hline
$k\om_1+\frac 12 (-k+k')$ &$2_{sss}$\\
\hline
\end{tabular}
\end{table}

\end{proof}

\section{Orthogonal bases for spaces of virtual elliptic
  characters}\label{s:ortho} We would like to describe the map
$\Ind_D$ from Definition \ref{d:indmap} explicitly and study its
integrality properties. For this, we will show that the lattice
$\overline R_\bZ(W)$ admits an orthogonal basis for all irreducible
root systems $R$. In particular, this basis consists of pure vectors,
in the sense of Definition \ref{d:pure}.

In Theorem \ref{t:Honbasis}, we determine orthogonal bases for the
spaces of virtual elliptic characters $\overline R_\bZ(\bH)$, for
every irreducible root system $R$ and every parameter function $k$,
except when $R=F_4$ and $k$ is special nonconstant.  By Corollary
\ref{c:resmap}, we see that every orthogonal basis of $\overline
R_\bZ(\bH)$ with respect to $\langle~,~\rangle^\EP_\bH$ gives, by
restriction to $W$, an orthogonal basis of $\overline R_\bZ(W)$ with respect to $\langle~,~\rangle^\el_W.$ 

When $R$ is not simply laced and the parameter function is generic, Theorem \ref{t:EPgeneric} gives orthonormal bases of $\overline R_\bZ(\bH)$. For special
parameters (i.e., non-generic) we use a limiting argument. This
argument is available when $R$ is of type $B_n$ via the exotic
geometric models for $\bH$-modules of \cite{K}, and it was already used
in \cite{CKK}. This approach can also be applied when $R$ is of type
$D_n$, since the graded Hecke algebra of type $B_n$ with parameter $0$
on the short roots is a $\bZ/2\bZ$ extension of the graded Hecke
algebra of type $D_n.$ However, in the $D_n$ case, one needs to
analyze carefully the changes in the R-groups under the extension by
$\bZ/2\bZ$, using the explicit description of type $B_n$ R-groups in
\cite{Sl}. The same limit argument can be used for $R=G_2$, where it
easy to construct explicit models for the families of discrete series modules
in the generic regions. Thus the cases that we cannot treat here are
certain special values of the parameters when $R=F_4.$

When $R$ is simply laced, especially if $R$ is of type $E$, we need to use the geometric classification of \cite{KL} and the results in \cite{R} relating the elliptic theories with the geometry of Kazhdan and Lusztig. Let $\fk g$ be the complex simple Lie algebra with Cartan subalgebra $V^\vee_\bC$ and root system $R$ and $G$ be the complex connected adjoint Lie group with algebra $\fk g.$ By \cite{KL,L,L2}, when the parameter function is constant $k=1,$ the irreducible $\bH$-modules are parametrized by $G$-conjugacy classes of triples $(s,e,\psi)$, where $s\in \fg$ is semisimple, $e\in \fg$ is such that $[s,e]=e$, in particular, $e$ is nilpotent, and $\psi$ is an irreducible representation of Springer type of the group of components $A(s,e)$ of the centralizer in $G$ of $s$ and $e$. Write $\pi_{(s,e,\psi)}$ for the module parametrized by the class $[(s,e,\psi)].$ In this correspondence, tempered modules with real central character are attached to triples $(\frac 12 h,e,\psi)$,  for a Lie triple $(e,h,f)$, while discrete series modules are attached to $(\frac 12 h,e,\psi)$ where $e$ is distinguished in the sense of Bala-Carter \cite{Ca}. Thus tempered modules with real central character are uniquely determined by $e$ and $\psi\in\widehat A(e)_0$ (characters of the component group of Springer type), so we write $\pi_{e,\psi}$ in place of $\pi_{(\frac 12 h,e,\psi)}.$ 

Reeder's results \cite{R} imply in this case that an irreducible tempered module $\pi_{(e,\psi)}\equiv 0$ in $\overline R_\bZ(\bH)$, unless $e$ is a quasidistinguished nilpotent element (see \cite[(3.2.2)]{R} for the definition, recalled below). If the parameter function $k$ is constant $k=1$, define the following set, consisting of elliptic tempered modules,
\begin{equation}\label{e:ETbasis}
\C B(\overline R_\bZ(\bH))=\{\pi_{e,\psi}: e\text{ dist.},~\psi\in\widehat A(e)_0\}\cup\{\pi_{e,\triv}: e\text{ quasidist., not dist.}\}.
\end{equation}
The notation for nilpotent orbits below is as in \cite{Ca}.

\begin{theorem}\label{t:Honbasis}
\begin{enumerate}
\item[(1)] Assume the root system $R$ is not of type $D_{2n}$ or
  $E_7$, and if $R=F_4$ then the parameter function $k$ is assumed
  either constant or generic. The space $\overline R_\bZ(\bH)$ has an orthonormal basis with respect to $\langle~,~\rangle^\EP_\bH$ consisting of elliptic tempered $\bH$-representations. In particular, when the parameter function is constant $k=1$, the set $\C B(\overline R_\bZ(\bH))$ from (\ref{e:ETbasis}) is such an orthonormal basis.
\item[(2)] Suppose $R=D_{2n}.$ The set  $\C B(\overline R_\bZ(\bH))$ is an orthogonal basis of $\overline R_\bZ(\bH)$ such that every element is a unit element, except the irreducible tempered modules $\pi_{e,\triv}$  when $e$ are representatives of quasidistinguished nilpotent orbits labeled by partitions $(a_1,a_1,a_2,a_2,\dots,a_{2l},a_{2l})$ of $2n$, $0<a_1<a_2<\dots<a_{2l}$,  which have elliptic norm $\sqrt 2$. 
\item[(3)] Suppose $R=E_7.$ The set 
$\C B(\overline R_\bZ(\bH))$
is an orthogonal basis for $\overline R_\bZ(\bH)$ such that every element is a unit element, except the irreducible tempered module $\pi_{A_4+A_1,\triv}$ which  has elliptic norm $\sqrt 2$.
\end{enumerate}

\end{theorem}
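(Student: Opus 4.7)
The plan is to build the orthogonal bases case by case: for non-simply-laced root systems via discrete series (Theorem \ref{t:EPgeneric}) possibly combined with a limiting argument, and for simply-laced root systems at $k=1$ via the Kazhdan-Lusztig geometric classification together with Reeder's elliptic pairing formula.

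For $R$ non-simply-laced with generic parameter $k$, Theorem \ref{t:EPgeneric} gives directly that $\DS(\bH)$ is an orthonormal basis of $\overline R_\bZ(\bH)$, settling (1) in this range. For the non-generic cases in types $B_n$, $C_n$, $G_2$, and at $k=1$ in $F_4$, I would fix a path $k_t$ of parameters from a point in a generic chamber to the desired special value and track the deformation of the generic orthonormal basis of discrete series. In type $B_n$ (and hence in $D_n$, using the inclusion of $\bH(D_n)$ inside $\bH(B_n)$ with parameter zero on the short roots as an index-two subalgebra and a careful R-group analysis via \cite{Sl}), Kato's exotic Deligne-Langlands correspondence \cite{K} provides a geometric model depending continuously on $k$, which lets one specialize the basis in each generic chamber to a basis of $\overline R_\bZ(\bH_{k_1})$ by tempered modules; this technique is already used in \cite{CKK}. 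For $G_2$ one writes down explicit models for each generic family. Orthonormality survives the specialization because the Ext groups computed in \cite{OS,OS1} vary semicontinuously and vanish between tempered modules with distinct central characters, and because Corollary \ref{c:EPds} pins the pairing to $0$ or $1$.

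For $R$ simply-laced at $k=1$, I would invoke the Kazhdan-Lusztig correspondence \cite{KL,L2}, under which tempered modules with real central character are parametrized by pairs $(e,\psi)$ with $e$ nilpotent and $\psi\in\widehat{A(e)}_0$. Reeder's theorem \cite{R} shows that $\pi_{e,\psi}\equiv 0$ in $\overline R_\bZ(\bH)$ unless $e$ is quasi-distinguished, so exactly the set $\C B(\overline R_\bZ(\bH))$ of (\ref{e:ETbasis}) is the candidate basis. Orthogonality of distinct $\pi_{e,\psi}$ and $\pi_{e',\psi'}$ follows from Corollary \ref{c:EPds}: either they have distinct central characters, in which case the $\EP$ pairing vanishes, or they are both discrete series (corresponding to distinguished $e=e'$ with different $\psi$'s), in which case they are orthonormal. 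To read off the self-pairings, I would combine Reeder's formulas expressing $\langle \pi_{e,\psi},\pi_{e,\psi}\rangle^\EP_\bH$ in terms of the $A(e)$-action on the relevant cohomology with a direct inspection of the Bala-Carter tables in each type, confirming that the norm is $1$ everywhere except at the orbits specified in (2) and (3).

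The main obstacle is precisely this last norm computation in the exceptional cases. In $E_7$, one must single out the orbit $A_4+A_1$ among all quasi-distinguished non-distinguished orbits and show that its self-pairing equals $2$; this requires a careful analysis of the component group action on Springer fibers, either via Kazhdan-Lusztig polynomial data or via direct computation of Dirac indices of candidate genuine $\wti W'$-lifts. In $D_{2n}$, the partitions $(a_1,a_1,\dots,a_{2l},a_{2l})$ acquire an elliptic norm of $\sqrt 2$ for a parallel reason, traceable to the splitting of ``very even'' orbits that occurs only when the rank is even, and this can be established uniformly from the explicit description of type-$D$ R-groups. The non-constant non-generic parameters for $F_4$ are outside the statement of the theorem because neither a Kato-type exotic geometry nor a complete enumeration of the relevant families is available there, blocking the limit argument that handled the other non-simply-laced special parameters.
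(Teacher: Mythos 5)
Your overall architecture matches the paper's: for non-simply-laced $R$ at generic $k$, invoke Theorem~\ref{t:EPgeneric}; for special parameters in types $B_n$, $G_2$, use the limiting argument via continuous families of discrete series from \cite{OS1} together with \cite{CKK}; and for constant $k$ (in particular the simply-laced cases), use the Kazhdan-Lusztig parametrization plus Reeder's results. This is essentially the paper's plan. Where you diverge, and where you have a genuine gap, is in the crucial step of computing the elliptic norms.

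The paper's key tool for the norm computation is the identity (from Corollary~\ref{c:resmap} and \cite[Proposition 3.4.3]{R})
\begin{equation*}
\langle\pi_{e,\psi},\pi_{e,\psi'}\rangle^\EP_\bH=\langle\psi,\psi'\rangle^\el_{A(e)},
\end{equation*}
where the right-hand elliptic pairing is for $A(e)$ acting on $\fk s_0$, the Lie algebra of a maximal torus of the reductive part of $Z_G(e)$. Once this is in hand, the whole problem becomes a finite, elementary check: first classify quasidistinguished nilpotents (the paper does this explicitly for classical $\fg$ via the Springer--Steinberg description of centralizers), then compute the $A(e)$-action on $\fk s_0$ orbit by orbit. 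You are vague on exactly this reduction (you speak of the ``$A(e)$-action on the relevant cohomology'' and propose to fall back on Kazhdan-Lusztig polynomial data or Dirac index computations), which risks turning a clean, short computation into a much harder and more indirect one; the Dirac index calculation in Example~\ref{ex:E7.1} in fact \emph{uses} Theorem~\ref{t:Honbasis}(3), so it cannot serve as a substitute proof of it. You also omit the classification of quasidistinguished orbits in the classical cases, which is a necessary ingredient for part (2).

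More seriously, your stated reason for the norm-$\sqrt 2$ phenomenon in $D_{2n}$ is incorrect. The orbits in question are labeled by partitions $(a_1,a_1,a_2,a_2,\dots,a_{2l},a_{2l})$ with all $a_i$ \emph{odd}. These are not very even orbits (which by definition have all parts even and are the ones that split into two $SO(2n)$-orbits). The actual source of the norm $\sqrt 2$ is representation-theoretic: for such $\tau$, one computes $A(e)\cong(\bZ/2\bZ)^{2l-1}$ acting on a $2l$-dimensional $\fk s_0$, and each coordinate hyperplane character appears with multiplicity $2$; this doubling is what forces $\langle\triv,\triv\rangle^\el_{A(e)}=2$. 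The analogous computation in $E_7$ for the orbit $A_4+A_1$ gives $A(e)\cong\bZ/2\bZ$ acting on a $2$-dimensional $\fk s_0$ by $2\sgn$, again yielding norm $2$. Attributing the $D_{2n}$ case to very even orbits would, if taken at face value, single out the wrong set of nilpotents and lead you to an incorrect statement.
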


The proof of Theorem \ref{t:Honbasis} is case by case and it is presented in subsections \ref{s:nonsimplylaced}-\ref{s:typeE}. The case $R=A_{n-1}$ is well-known: the space $\overline R_\bZ(\bH)$ is one-dimensional spanned by the Steinberg module.

\subsection{$R$ not simply laced}\label{s:nonsimplylaced} When the parameter function $k$ is
generic, Theorem \ref{t:EPgeneric} says that $\DS(\bH)$ is an
orthonormal basis of $\overline R_\bZ(\bH)$, and therefore this proves
Theorem \ref{t:Honbasis}(1) in this case.

When $k$ is non-generic, one proves the result by a  known limiting
argument, e.g., \cite[section 2.4]{CK}. Assume $R$ is of type $B_n$ or $G_2.$ Let $k_0$ be a special parameter function. Without loss of generality, we may assume that $k_0(\al)=1$, when $\al$ is a long root, and $k_0(\beta)=m_0$, when $\beta$ is a short root. Then there exists $\ep>0$ such that the parameter function $k_t$, where $k_t(\al)=1$ and $k_t(\beta)=m_0+t$, is generic for all $t\in (0,\ep).$  Let $$\C F=\{\pi^{\C F}_t: \pi^{\C F}_t\in \DS(\bH_{k_t})\}_{0<t<\ep}$$ be a continuous family of discrete series modules as in \cite[Definition 3.5]{OS1}. Then $\pi^{\C F}_t|_W\cong \pi^{\C F}_{t'}|_W$ for all $t,t'\in (0,\ep).$ One can consider the limit module $\pi^{\C F}_0=\lim_{t\to 0^+}\pi^{\C F}_t$. This is a tempered $\bH_{k_0}$-module with the same $W$-structure as $\pi^{\C F}_t.$ In particular, the set $\{\pi^{\C F}_0|_W: \C F\}$ is orthonormal in $ \overline R_\bZ(W)$, and again by the isometry $r$ from (\ref{e:rmap}), the set $\C B=\{\pi^{\C F}_0: \C F\}$ is orthonormal in $\overline R_\bZ(\bH)$. By \cite[Theorem A]{CKK}, when $R$ is of type $B_n$ this set consists of irreducible $\bH_{k_0}$-modules. Corollary \ref{c:resmap} implies in particular that $\dim \overline R_\bZ(\bH_{k_0})=\dim \overline R_\bZ(\bH_{k_t})$, and thus $\C B$ is a basis of $\dim \overline R_\bZ(\bH_{k_0}).$

\subsection{$R=D_n$} We begin by recalling the definition of quasidistinguished nilpotent elements and presenting their classification when $\fg$ is classical. Let $e$ be a nilpotent element in $\fg$, and assume that $e$ is contained in a Levi subalgebra $\fk m$ of $\fg$ with Levi subgroup $M$ of $G$. The natural map of component groups $A_M(e)\to A_G(e)$ is in fact an injection. The element $e$ is called quasidistinguished if 
\begin{equation}
A_G(e)\neq \bigcup_{e\in \fk m} A_M(e),
\end{equation}
where the union in the right hand side is over all proper Levi subalgebras containing $e$. Every distinguished nilpotent element is automatically quasidistinguished. The classification of nilpotent adjoint orbits when $\fg$ is of classical type is based on the Jordan canonical form. The nilpotent orbits are parametrized by:
\begin{enumerate}
\item[(i)] partitions of $n$, when $\fg=sl(n)$;
\item[(ii)] partitions of $2n$ where every odd part occurs with even multiplicity, when $\fg=sp(2n)$;
\item[(iii)] partitions of $m$ where every even part occurs with even multiplicity, when $\fg=so(m)$, except there are two distinct nilpotent orbits in $so(2n)$ for every partition where all parts are even. 
\end{enumerate}
The distinguished orbits are parametrized by:
\begin{enumerate}
\item[(i)] the partition $(n)$ (principal nilpotent orbit), when $\fg=sl(n)$;
\item[(ii)] partitions of $2n$ of the form $(a_1,a_2,\dots,a_l)$, where $0<a_1<a_2<\dots<a_l$ are all even, when $\fg=sp(2n)$;
\item[(iii)] partitions of $m$ of the form $(a_1,a_2,\dots,a_l)$, where $0<a_1<a_2<\dots<a_l$ are all odd, when $\fg=so(m)$.
\end{enumerate}
The centralizers of nilpotent elements $e$ and the component groups $A(e)$ are known explicitly for classical group, they were computed by Springer and Steinberg, see \cite[pp.398-399]{Ca}. A case-by-case analysis leads to the following classification of quasidistinguished orbits.

\begin{lemma}
\begin{enumerate}
\item[(i)] If $\fg=sl(n)$, the only quasidistinguished nilpotent orbit is the principal one.
\item[(ii)] If $\fg=sp(2n)$, the quasidistinguished nilpotent orbits are labeled by partitions of $2n$ with even parts such that the multiplicity of every part is at most two.
\item[(iii)] If $\fg=so(2n+1)$,   the quasidistinguished nilpotent orbits are labeled by partitions of $2n+1$ with odd parts such that the multiplicity of every part is at most two.
\item[(iv)] If $\fg=so(2n)$,   the quasidistinguished nilpotent orbits are labeled by partitions of $2n$ with odd parts, such that the multiplicity of every part is at most two, and if there are no parts with multiplicity one, then the number of distinct odd parts is even.
\end{enumerate}
\end{lemma}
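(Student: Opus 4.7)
The plan is to reduce the statement to an explicit combinatorial analysis built on two classical ingredients: the Springer--Steinberg description of the component group $A_G(e)$ for classical adjoint $G$, and the description of Levi subalgebras of classical Lie algebras as products $\mathfrak{gl}(a_1)\times\cdots\times\mathfrak{gl}(a_k)\times\mathfrak{g}_0$ with $\mathfrak{g}_0$ a simple classical factor of the same type as $\mathfrak{g}$. The key reduction principle is that the $\mathfrak{gl}$-factors contribute trivially to $A_M(e)$, so each embedding $A_M(e)\hookrightarrow A_G(e)$ arises by ``siphoning off'' pairs of equal Jordan blocks of $\lambda$ into $\mathfrak{gl}$-factors and taking the component group of the residual classical partition. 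Quasidistinguishedness then becomes an explicit combinatorial question about which elements of $A_G(e)$ are missed by the images corresponding to all such splittings.

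For type $A$ the component group is trivial, so quasidistinguished coincides with distinguished, which occurs only for the principal orbit (any partition with more than one part admits a $\mathfrak{gl}$-splitting), giving (i). For type $C$, with $A_G(e)\cong(\mathbb{Z}/2)^s$ indexed by the distinct even parts of $\lambda$, I would argue in two steps: first, if $\lambda$ contains an odd part (necessarily of even multiplicity) or an even part of multiplicity at least $4$, a single pair-splitting preserves the set of distinct even parts and hence gives $A_M(e)=A_G(e)$, ruling out quasidistinguishedness; second, if $\lambda$ has only even parts each of multiplicity at most $2$, every non-trivial splitting loses at least one distinct even part, so each $A_M(e)$ image is a proper coordinate subgroup, and the union over proper Levis misses the ``all $-1$'' element of $A_G(e)$. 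The same proof works for type $B$ after swapping odd and even; in that case the parity $\sum\lambda_i=2n+1$ forces at least one multiplicity-one odd part in any candidate partition, so no further condition is needed and (iii) follows.

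The main obstacle and chief subtlety arise in type $D$, where the relation $\prod_{j}\epsilon_j=1$ realises $A_G(e)$ as a proper subgroup of $(\mathbb{Z}/2)^d$. After the initial reductions (no even parts, no odd part of multiplicity exceeding two) the problem becomes the computation of the image $A_M(e)\hookrightarrow A_G(e)$ for the Levi obtained by splitting off a subset $I\subseteq\{1,\dots,s\}$ of multiplicity-two pairs. I expect this image to equal exactly $\{\epsilon\in A_G(e):\epsilon_{a_j}=1\text{ for }j\in I\}$, so the union over proper Levis is the subset of $A_G(e)$ where at least one multiplicity-two coordinate is $+1$; its complement, the locus where $\epsilon_{a_j}=-1$ for all $j$, is non-empty precisely when the residual parity equation $\prod\epsilon=1$ admits a solution with these prescribed values. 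If $\lambda$ contains a multiplicity-one part (necessarily an even number of them by parity of $2n$), the free coordinates along the multiplicity-one parts always allow the equation to be solved, and $e$ is quasidistinguished; if every odd part has multiplicity two, the same equation collapses to $(-1)^l=1$, i.e.\ the parity clause that $l$ be even, yielding (iv). The genuine technical work lies in justifying the explicit form of the image $A_M(e)\hookrightarrow A_G(e)$ from the Springer--Steinberg presentation, since that identification is what produces the parity condition rather than any cruder counting argument.
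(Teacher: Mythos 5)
The paper itself gives no proof of this lemma beyond citing Carter [Ca, pp.~398--399] for the Springer--Steinberg description of centralizers and component groups and asserting that a case-by-case analysis yields the classification; your proposal supplies a systematic version of that analysis and is in the right spirit. That said, the gaps go beyond the one you flag. The claim that $A_M(e)$ maps onto a coordinate subgroup of $A_G(e)\cong(\mathbb{Z}/2\mathbb{Z})^s$ cut out by setting the coordinate of the split-off part equal to $1$ is the crux in types $B$ and $C$ just as in type $D$: you assert it without comment in the first two cases and only acknowledge the need to justify it in the last. It is true, and does follow from the block structure of the Levi embedding $\mathfrak{gl}(a)\times\mathfrak{g}_0\hookrightarrow\mathfrak{g}$ together with Springer--Steinberg's description of the reductive part of $Z_G(e)$ as a product $\prod_d O(m_d)\times\prod_{d'}Sp(m_{d'})$, but that is exactly the content that has to be written out. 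A second subtlety, not addressed at all, is that the paper takes $G$ to be the connected adjoint group, whereas Carter's tables and the $(\mathbb{Z}/2\mathbb{Z})^s$ presentation of $A(e)$ are for the classical groups $Sp_{2n}$, $O_N$, $SO_N$. Passing to the adjoint form further quotients $A(e)$ by the image of the center, which is nontrivial precisely when some part of the relevant parity has odd multiplicity; one must then verify that this quotient is compatible with restriction to Levis and that the complement of $\bigcup_M A_M(e)$ is stable under the center's image. Both do hold in the cases at hand (for instance in type $C$ with all parts of multiplicity $\le 2$, the center maps to the sign vector that is $-1$ exactly on multiplicity-one coordinates, which preserves the set $\{\epsilon:\epsilon_d=-1\text{ for all }d\text{ of multiplicity }2\}$), but this is a genuine check and not free.

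One computational slip: in type $C$ the threshold for a pair-splitting to preserve the set of distinct even parts is multiplicity at least $3$, not $4$, since splitting a pair off a part of multiplicity $3$ leaves multiplicity $1$, which is still a distinct even part. As written, your dichotomy leaves the multiplicity-exactly-$3$ case uncovered, even though the conclusion there is the same.
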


In light of Corollary \ref{c:EPds}, we only need to compute the elliptic norms of tempered modules $\pi_{e,\psi}$, when $e$ is quasidistinguished, but not distinguished. By Corollary \ref{c:resmap} and \cite[Proposition 3.4.3]{R} 
\begin{equation}\label{e:Reeder}
\langle\pi_{e\psi},\pi_{e,\psi'}\rangle^\EP_\bH=\langle\pi_{e,\psi}|_W,\pi_{e,\psi'}|_W\rangle^\el_W=\langle\psi,\psi'\rangle^\el_{A(e)}
\end{equation}
where the elliptic pairing in the right hand side of (\ref{e:Reeder}) is with respect to the action of $A(e)$ on the toral Lie algebra $\fk s_0$ of the reductive centralizer of $e$  in $G$, \cf \cite[section 3.2]{R}. Denote this latter elliptic space by $\overline R_\bZ(A(e)).$

\begin{lemma}Let $R$ be of type $D_n$ and $e$ be a quasidistinguished, not distinguished nilpotent element in $\fg=so(2n)$, parameterized by a partition $\tau$ of $2n$. 
\begin{enumerate}
\item[(a)] If $\tau=(a_1,a_1,a_2,a_2,\dots,a_l,a_l,b_1,b_2,\dots,b_{2k}),$ where $k\ge 1,$ $0<a_1<a_2<\dots<a_l,$  $0<b_1<b_2<\dots<b_{2k}$, $a_i\neq b_j,$ for all $i,j$, and all $a_i$ and $b_j$ are odd, then $\dim \overline R_\bZ(A(e))=1$, and $\langle\triv,\triv\rangle^\el_{A(e)}=1.$
\item[(b)] If $\tau=(a_1,a_1,a_2,a_2,\dots,a_{2l},a_{2l}),$ where $0<a_1<a_2<\dots<a_{2l},$ and all $a_i$ are odd, then $\dim \overline R_\bZ(A(e))=1$, and $\langle\triv,\triv\rangle^\el_{A(e)}=2.$
\end{enumerate}

\end{lemma}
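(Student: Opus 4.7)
The plan is to invoke Reeder's Proposition 3.4.3 (reproduced in equation (\ref{e:Reeder})) to reduce $\langle\pi_{e,\triv},\pi_{e,\triv}\rangle^\EP_\bH$ to the elliptic character pairing $\langle\triv,\triv\rangle^\el_{A(e)}$, computed with respect to the action of $A(e)$ on the toral Lie algebra $\fk s_0 \subset \operatorname{Lie}(Z_G(e)^{\red})$. I would first describe this centralizer using the classical Collingwood--McGovern formulas \cite[Theorem 6.1.3]{Ca}: since every part of $\tau$ is odd, $Z_{O(2n)}(e)^{\red} \cong \prod_p O(r_p(\tau))$ with identity component $\prod_p SO(r_p)$, so $\fk s_0$ has dimension $l' = l$ in case (a) and $l' = 2l$ in case (b), with one $\operatorname{Lie}(SO(2))$ summand per multiplicity-two part. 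Each $O(2)/SO(2)\cong \bZ/2$ yields a sign character $\chi_i$ of $A(e)$ acting by $\pm 1$ on the $i$-th coordinate of $\fk s_0$; each $O(1)\cong \bZ/2$ (present only in case (a)) yields a character $\chi'_j$ acting trivially on $\fk s_0$ but entering the structure of $A(e)$.

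Next I would pin down $A(e)$ in the adjoint group $G=PSO(2n)$. From $A^{O}(e)=(\bZ/2)^d$ with $d$ the number of distinct parts, the $SO$-determinant condition gives $A^{SO}(e)=(\bZ/2)^{d-1}$, and the further quotient by the image of the center $\{\pm I_{2n}\}$ yields $A^{PSO}(e)$. In coordinates on $\prod O(r_p)/SO(r_p)$ this image is $(0^l,1^{2k})$ in case (a), which is nontrivial precisely because $k\ge 1$ (using $-I_2\in SO(2)$ while $-I_1\notin SO(1)$), and it is trivial in case (b).

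The key calculation I would then carry out uses the factorization $\det_{\fk s_0}(1-a)=\prod_{i=1}^{l'}(1-\chi_i(a))$ and character orthogonality to rewrite
\begin{equation*}
\langle\triv,\triv\rangle^\el_{A(e)} = \sum_{S\subseteq\{1,\ldots,l'\}}(-1)^{|S|}\cdot\bigl[\,\sum_{i\in S}\chi_i\equiv 0\text{ in }X(A(e))\,\bigr].
\end{equation*}
In case (b) the only nontrivial relation among these characters is $\sum_{i=1}^{2l}\chi_i=0$, so only $S=\emptyset$ and $S=\{1,\ldots,2l\}$ contribute, giving $1+(-1)^{2l}=2$. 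In case (a) the relation $\sum_i\chi_i+\sum_j\chi'_j=0$ in $X(A^{SO}(e))$, combined with the descent requirement $\sum_j\chi'_j=0$ (forced by triviality of characters on $-I$), prevents any non-empty subset of $\{\chi_i\}$ alone from being trivial in $X(A^{PSO}(e))$; only $S=\emptyset$ contributes, giving $1$. The dimension claim $\dim\overline R_\bZ(A(e))=1$ follows by the parallel count of elliptic classes (those acting as $-I$ on $\fk s_0$), which collapses to a single class after accounting for the kernel of the action.

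The main obstacle will be Step~2 and its consequence for the structure of $X(A^{PSO}(e))$, particularly in case (a): one must verify carefully that the quotient by the image of $-I$ kills the candidate relation $\sum_{i=1}^{l}\chi_i=0$ that would otherwise (as in case (b)) contribute a second surviving term and double the norm. The hypothesis $k\ge 1$ enters essentially here --- the multiplicity-one parts $b_j$ are precisely what makes the image of $-I$ nontrivial in $A^{SO}(e)$ and thereby force the answer $1$ rather than $2$.
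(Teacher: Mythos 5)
Your overall strategy matches the paper's: identify $A(e)$ and its action on $\fk s_0$ explicitly via the classical description of centralizers, then compute the elliptic pairing directly. The paper simply records the resulting structure ($A(e)=(\bZ/2)^l\times(\bZ/2)^{2k-2}$ acting coordinate-wise through the first factor in case (a); $(\bZ/2)^{2l-1}$ in case (b)) and says the claims follow; your rewriting of the pairing as
$\sum_{S}(-1)^{|S|}\bigl[\sum_{i\in S}\chi_i=0\bigr]$
is a correct and useful expansion, and your final numbers ($1$ and $2$) agree with the paper.

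However, there is a genuine gap in the mechanism you give for case (a). You attribute the vanishing of the candidate relation $\sum_{i=1}^{l}\chi_i=0$ to the quotient by the image of $-I$, supported by the assertion that ``the descent requirement $\sum_j\chi'_j=0$ is forced by triviality of characters on $-I$.'' That assertion is false: in $X(A^{SO}(e))$ one has $\sum_j\chi'_j=\sum_i\chi_i$, and this common element is \emph{nonzero} (and it does descend to $A^{PSO}(e)$, since it is trivial on $(0^l,1^{2k})$ because $2k$ is even). More to the point, the $-I$ quotient cannot create or destroy relations among the $\chi_i$: since the $\chi_i$ already kill the image of $-I$, and $X(A^{PSO}(e))\hookrightarrow X(A^{SO}(e))$ via pullback along the surjection, one has $\sum_{i\in S}\chi_i=0$ in $X(A^{PSO}(e))$ if and only if the same holds in $X(A^{SO}(e))$. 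The correct reason only $S=\emptyset$ survives in case (a) is that the unique relation among all coordinate characters in $X(A^{SO}(e))$ is $\sum_{i}\chi_i+\sum_j\chi'_j=0$, and the $\chi'_j$-terms (present precisely because there are multiplicity-one parts $b_j$, i.e.\ $k\ge 1$) prevent $\sum_{i\in S}\chi_i$ for $\emptyset\neq S\subseteq\{1,\dots,l\}$ from being a multiple of that relation. In case (b) there are no $\chi'_j$, so the full set $S=\{1,\dots,2l\}$ does give a relation, producing the extra term. So $k\ge 1$ matters because it supplies the $\chi'_j$, not because of the $-I$ image (which is a separate, in fact irrelevant, effect here). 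I would also flag that your argument for $\dim\overline R_\bZ(A(e))=1$ (``collapses to a single class after accounting for the kernel'') is stated no more precisely than in the paper itself; if you want a self-contained justification you should address the trivially-acting $(\bZ/2)^{2k-2}$ factor explicitly, since the naive count of elliptic elements in $A(e)$ is $2^{2k-2}$, not $1$.
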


\begin{proof} The claims follow immediately once we describe explicitly the action of $A(e)$ on $\fk s_0.$

(a) In this case, the component group is $A(e)=(\bZ/2\bZ)^l\times (\bZ/2\bZ)^{2k-2}$ acting on the space $\fk s_0$ of dimension $l$. The action is as follows: $(\bZ/2\bZ)^{2k-2}$ acts trivially, while $(\bZ/2\bZ)^l$ acts via $$\bigoplus_{i=1}^l \triv\boxtimes\dots\boxtimes\triv\boxtimes\sgn_i\boxtimes\triv\boxtimes\dots\boxtimes\triv,$$ where $\sgn_i$ is the $\sgn$ representation on the $i$-th position. 

(b) In this case, the component group is $A(e)=(\bZ/2\bZ)^{2l-1}$ acting on the space $\fk s_0$ of dimension $2l$. The action is  via $$2\bigoplus_{i=1}^{2l-1} \triv\boxtimes\dots\boxtimes\triv\boxtimes\sgn_i\boxtimes\triv\boxtimes\dots\boxtimes\triv.$$
\end{proof}

\subsection{$R$ of type $E$}\label{s:typeE}
It remains to discuss the cases when $R$ is of type $E_6,E_7,E_8$.

In type $E_8$, there are four quasidistinguished, non-distinguished nilpotent orbits, and in all cases $A(e)\cong \bZ/2\bZ$ acts by the $\sgn$ representation on $\fk s_0$. This means that $\dim \overline R_\bZ(A(e))=1$ and $\langle\triv,\triv\rangle^\el_{A(e)}=1$ in all cases. The explicit relations between tempered modules are:
\begin{enumerate}
\item[(a)] $D_5+A_2$: $\pi_{D_5+A_2,\triv}\oplus\pi_{D_5+A_2,\sgn}=\ind_{\bH(D_5+A_2)}^{\bH(E_8)}(\pi_{(91),\triv}\boxtimes\pi_{(3),\triv})$;
\item[(b)] $D_7(a_1)$: $\pi_{D_7(a_1),\triv}\oplus\pi_{D_7(a_1),\sgn}=\ind_{\bH(D_7)}^{\bH(E_8)}(\pi_{(11,3),\triv})$;
\item[(c)] $D_7(a_2)$: $\pi_{D_7(a_2),\triv}\oplus\pi_{D_7(a_2),\sgn}=\ind_{\bH(D_7)}^{\bH(E_8)}(\pi_{(9,5),\triv})$;
\item[(d)] $E_6(a_1)+A_1$: $\pi_{E_6(a_1)A_1,\triv}\oplus\pi_{E_6(a_1)A_1,\sgn}=\ind_{\bH(E_6+A_1)}^{\bH(E_8)}(\pi_{E_6(a_1),\triv}\boxtimes \pi_{(2),\triv})$.
\end{enumerate}

In type $E_7$, there are two quasidistinguished, non-distinguished nilpotent orbit and in both cases $A(e)\cong \bZ/2\bZ$:
\begin{enumerate}
\item $E_6(a_1)$: $\pi_{E_6(a_1),\triv}\oplus\pi_{E_6(a_1),\sgn}=\ind_{\bH(E_6)}^{\bH(E_7)}(\pi_{E_6(a_1),\triv})$; the group $A(e)$ acts by $\sgn$ on $\fk s_0$;
\item $A_4+A_1$: $\pi_{A_4+A_1,\triv}\oplus \pi_{A_4+A_1,\sgn}=\ind_{A_4+A_1}^{E_7}(\pi_{(5)}\boxtimes\pi_{(2)})$. Here $\fk s_0$ is two-dimensional, and the group $A(e)=\bZ/2\bZ$ acts by $2\sgn$ on $\fk s_0$. This has the effect that $\wedge^\pm \fk s_0=2(\triv-\sgn)$, and therefore $\dim \overline R_\bZ(A(e))=1$, but $\langle\triv,\triv\rangle^\el_{A(e)}=2.$
\end{enumerate}

In type $E_6$, there is one quasidistinguished, non-distinguished nilpotent orbit denoted $D_4(a_1)$, whose component group is $S_3.$ There are three tempered modules $\pi_{D_4(a_1),\triv}$, $\pi_{D_4(a_1),(21)}$, and $\pi_{D_4(a_1),\sgn}$. First we have $\ind_{\bH(D_4)}^{\bH(E_6)}(\pi_{(53),\triv})=\pi_{D_4(a_1),\triv}\oplus 2\pi_{D_4(a_1),(21)}\oplus\pi_{D_4(a_1),\sgn}.$ Next, in $D_5,$ the induced module $\ind_{\bH(D_4)}^{\bH(D_5)}(\pi_{(53),\triv})$ splits into a sum of two tempered modules $\pi_{(5311),\triv}\oplus\pi_{(5311),\sgn}$, and we have
\begin{equation}
\begin{aligned}
\ind_{\bH(D_5)}^{\bH(E_6)}(\pi_{(5311),\triv})&=\pi_{D_4(a_1),\triv}\oplus \pi_{D_4(a_1),(21)},\\
 \ind_{\bH(D_5)}^{\bH(E_6)}(\pi_{(5311),\sgn})&=\pi_{D_4(a_1),(21)}\oplus\pi_{D_4(a_1),\sgn}.
\end{aligned}
\end{equation}
The group $A(e)=S_3$ acts on the two-dimensional $\fk s_0$ via the reflection representation, therefore $\dim \overline R_\bZ(A(e))=1$ and $\langle\triv,\triv\rangle^\el_{A(e)}=1.$

\bigskip

This concludes the proof of Theorem \ref{t:Honbasis}.

\subsection{Dirac indices}We end the section with the calculation of Dirac indices in the cases when the basis elements in $\C B(\overline R_\bZ(\bH))$ have elliptic norm $\sqrt 2.$

\begin{example}\label{ex:E7.1}
Assume $R=E_7$ and $\delta=\pi_{A_4+A_1,\triv}|_W\in \overline R_\bZ(W).$ Then there exist genuine $\wti W$-representations $\wti\delta^+,\wti\delta^-$, $\wti\delta^-=\wti\delta^+\otimes\sgn$ such that $i(\delta)=\wti\delta^+-\wti\delta^-.$ In this case, $\wti\delta^+$ is the sum of two irreducible $64$-dimensional $\wti W$-representations. 
\end{example}

\begin{proof}
The first claim follows from Theorem \ref{t:CT}(2) since $\langle\delta,\delta\rangle^\el_W=2$. The two irreducible $\wti W$-representations that enter are explicitly known by \cite[Table 4]{C}, where they are denoted by $64_s$ and $64_{ss}.$

The index $i(\delta)=I(\pi_{A_4+A_1,\triv})$ can also be calculated more directly as follows. Firstly, by Corollary \ref{c:voganchar} and (\ref{e:zetaomega}), the only irreducible $\wti W$-representations $\wti\sigma$ that can contribute to $I(\pi_{A_4+A_1,\triv})$ have the property that $\wti\sigma(\Omega_{\wti W})=\langle h/2,h/2\rangle,$ where $h$ is a middle element of a Lie triple for the nilpotent orbit $A_4+A_1.$ In \cite[Table 4]{C}, it is calculated that there are only four $\wti W$-representations with this property: $64_s,$ $64_{ss}$, $64_s\otimes\sgn$ and $64_{ss}\otimes \sgn.$ Next, one needs to see which of them occur in $\pi_{A_4+A_1,\triv}\otimes S^\pm$. As it is well-known, $\pi_{A_4+A_1,\triv}|_W\cong H^\bullet(\mathbf B_e)^\triv\otimes \sgn,$ where $e$ is a nilpotent element of type $A_4+A_1$, and $H^\bullet(\mathbf B_e)$ is the total cohomology of the Springer fiber $\mathbf B_e.$ The structure of $ H^\bullet(\mathbf B_e)^\triv$ is known explicitly by \cite{BS}, but we will not need this. The top degree component is the $W(E_7)$-representation denoted by $\phi_{512,11}$ in \cite{Ca}. Thus, Springer theory tells us that
\begin{equation}
\dim\Hom_{W(E_7)}(\phi_{512,12},\pi_{A_4+A_1,\triv})=1.
\end{equation}
(Again, in the notation of \cite{Ca}, $\phi_{512,12}=\phi_{512,11}\otimes\sgn$.) A direct calculation using the software package \texttt{chevie} in the computer algebra system GAP3.4 and the character table for $\wti W(E_7)$ from \cite{M}, reveal that
\begin{equation}\label{e:tensE7}
64_s\otimes S^+=64_{ss}\otimes S^+=\phi_{512,12}.
\end{equation}
(Notice that $\dim S^+=2^3$ indeed.) But this means that 
\begin{equation}
\dim\Hom_{\wti W(E_7)}(64_s,\pi_{A_4+A_1,\triv}\otimes S^+)=\dim\Hom_{\wti W(E_7)}(64_{ss},\pi_{A_4+A_1,\triv}\otimes S^+)=1.
\end{equation}
Since $\pi_{A_4+A_1,\triv}$ is a unitary $\bH$-module, \cite[Proposition 4.9]{BCT} implies that in fact
\begin{equation}
\ker D^+=64_s+64_{ss}; 
\end{equation}
similarly, one deduces that $\ker D^-=64_s\otimes\sgn+64_{ss}\otimes \sgn$, and the claim follows.
\end{proof}

\begin{example}\label{ex:D2n.1}
Assume $R=D_{2n}$,  and $\tau=(a_1,a_1,a_2,a_2,\dots,a_{2l},a_{2l})$ is a partition of $2n$, where $0<a_1<a_2<\dots<a_{2l},$ and all $a_i$ are odd. Then there exist genuine $\wti W'$-representations $\wti\delta^+,\wti\delta^-$, $\wti\delta^-=\Sg(\wti\delta^+)$ such that $i(\delta)=\wti\delta^+-\wti\delta^-.$ In this case, $\wti\delta^+$ is the sum of two irreducible $\wti W'$-representations. 
\end{example}

\begin{proof}
 To determine $\wti\delta^+_1$ and $\wti\delta^+_2$ explicitly, we rely on the classification of genuine $\wti W(D_m)$-modules \cite{Rea}, and the Dirac cohomology calculations in \cite{C}. Recall that a complete set of inequivalent genuine irreducible $\wti W(B_m)$-modules is given by $\{(\sigma\times\emptyset)\otimes \C S: \sigma\text{ partition of }m\}$, where $\C S\in\{S^+,S^-\}$, when $m$ is odd, and $\C S=S$, when $m$ is even. The restriction to $\wti W(D_m)$ yields the following complete sets of inequivalent irreducible modules:
\begin{enumerate}
\item[(i)] $\{(\sigma\times\emptyset)\otimes S^+: \sigma\text{ partition of }m\}$, when $m$ is odd. In this case, $(\sigma\times\emptyset)\otimes S^-=(\sigma^t\times\emptyset)\otimes S^+$ as $\wti W(D_m)$-representations.
\item[(ii)] $\{(\sigma\times\emptyset)\otimes S: \sigma\text{ partition of }m,\ \sigma^t\neq \sigma\}\cup \{\wti\sigma_1,\wti\sigma_2: \sigma\text{ partition of }m,\ \sigma^t=\sigma\}$, when $m$ is even, where $(\sigma\times\emptyset)\otimes S|_{W(D_m)}=\wti\sigma_1\oplus\wti\sigma_2,$ when $\sigma^t=\sigma.$ In this case, every irreducible $\wti W(D_m)$-representation is self dual under tensoring with $\sgn.$
\end{enumerate}
Returning to our example $\tau$ in $D_{2n}$, consider the strings $(\frac{a_i-1}2,\frac{a_i-3}2,\dots,-\frac{a_i-1}2)$, $1\le i\le 2l$. (These strings give the standard coordinates of $h/2$, where $h$ is the middle element of a Lie triple attached to $\tau$.) There is one way to form a partition $\sigma$ of $n$ such that these strings form the hooks of a left-justified decreasing tableau with shape $\sigma$ and content $i-j$ for the $(i,j)$-box. For example, when $\tau=(3,3,5,5)$ in $D_8$, the partition $\sigma$ is $(3,3,2)$, see Figure \ref{fig}.
\begin{figure}[h]
$$
\begin{tableau}
:.{0}.{1}.{2} \\
:.{{-1}}.{0}.{1} \\
:.{-2}.{{-1}} \\
\end{tableau}$$
\caption{Nilpotent $\tau=(3,3,5,5)$ and partition $\sigma=(3,3,2)$.\label{fig}}
\end{figure}

 Notice that such $\sigma$ always has the property that $\sigma^t=\sigma.$ By \cite{C}, it follows that $\wti\delta^+=\wti\sigma^+_1\oplus\wti\sigma^+_2$ as $\wti W(D_{2n})'$-representations.
\end{proof}

\subsection{Integrality properties of $\Ind_D$}\label{s:integral} Recall the sublattice
$\C Y=\rr(\overline R_\bZ(\bH))\subset \overline R_\bZ(W)$ defined in (\ref{e:Y}). As in the
proof of Corollary \ref{c:indmap}, the restriction $$\Ind_D|_{\C Y}:\C
Y\to \overline R_\bZ(\bH)$$ is an isometric isomorphism and an inverse
of $\rr:\overline R_\bZ(\bH)\to \C Y.$ A natural question is  when
$$\C Y=\overline R_\bZ(W),$$ or equivalently when $\Ind_D$ gives an
isomorphism $\Ind_D: \overline R_\bZ(W)\to \overline R_\bZ(\bH).$

\begin{proposition}\label{p:integral}
Suppose $R$ is irreducible. If one of the following
  conditions is satisfied:
\begin{enumerate}
\item the parameter function $k$ is constant; 
\item $R$ is not simply laced and the parameter $k$ is generic;
\item $R$ is $B_n$ or $G_2$, 
\end{enumerate}
then $\C Y=\overline R_\bZ(W)$.
\end{proposition}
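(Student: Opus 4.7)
The plan is to verify $\mathcal{Y}=\overline{R}_\mathbb{Z}(W)$ case by case by exhibiting, in each situation, a $\mathbb{Z}$-basis of $\overline{R}_\mathbb{Z}(W)$ that lies in the image of $\mathsf{r}$. Since $\mathsf{r}$ is a linear isometry over $\mathbb{C}$, the inclusion $\mathcal{Y}\subseteq\overline{R}_\mathbb{Z}(W)$ is automatically a full-rank sublattice of the same rank (equal to the number of elliptic conjugacy classes in $W$), so one only needs to rule out a nontrivial finite index.

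For case (1) with constant parameter $k=1$, the strategy is to use the Kazhdan--Lusztig--Springer geometric parametrization of tempered $\mathbb{H}$-modules together with Reeder's basis of $\overline{R}_\mathbb{Z}(W)$. By \cite[Proposition 3.4.3]{R}, the Springer representations $\{\sigma_{e,\psi}\}$, where $e$ runs over quasidistinguished nilpotent orbits and $\psi\in\widehat{A(e)}_0$, form a $\mathbb{Z}$-basis of $\overline{R}_\mathbb{Z}(W)$. On the other hand, the corresponding basis $\mathcal{B}(\overline{R}_\mathbb{Z}(\mathbb{H}))$ from Theorem \ref{t:Honbasis}(1) consists precisely of tempered modules $\pi_{e,\psi}$ labelled by the same data. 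The key step is to recall (from Lusztig's geometric construction and the Springer correspondence) that in the Grothendieck group of $W$,
\[
\pi_{e,\psi}|_W \;=\; \sigma_{e,\psi} \;+\; \sum_{\mathcal{O}_{e'}\subsetneq\overline{\mathcal{O}_e}} m^{e,\psi}_{e',\psi'}\,\sigma_{e',\psi'},
\]
a relation that is triangular (with $1$'s on the diagonal) with respect to the closure order on nilpotent orbits, and has integer coefficients. Restricted to the elliptic quotient, non-quasidistinguished Springer representations vanish by \cite{R}, so the above system is unimodular on the elliptic basis. Hence the set $\{\pi_{e,\psi}|_W\}$ spans the same $\mathbb{Z}$-lattice as $\{\sigma_{e,\psi}\}$, giving $\mathcal{Y}=\overline{R}_\mathbb{Z}(W)$.

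For case (2), non-simply-laced $R$ with generic $k$, Theorem \ref{t:EPgeneric} supplies an orthonormal basis $\{\pi_\lambda\}_{\lambda\in W\backslash\mathrm{Res}^{\lin}(R)}$ of $\overline{R}_\mathbb{Z}(\mathbb{H})$. Since $\mathsf{r}$ is an isometry, the restrictions form an orthonormal set in $\overline{R}_\mathbb{Z}(W)$ spanning $\overline{R}_\mathbb{C}(W)$. One then identifies these restrictions explicitly: for $R=B_n$, the computation already used in the proof of Theorem \ref{t:ccmap} gives $\pi_\sigma|_W\cong(\sigma\times\emptyset)\otimes\mathsf{sgn}$, and these images are known to constitute a $\mathbb{Z}$-basis of $\overline{R}_\mathbb{Z}(W(B_n))$ (via the partitions-of-$n$ parametrization of elliptic classes); for $R=G_2,F_4$, one checks the claim by finite enumeration against the character tables and Tables \ref{ta:F4}, \ref{ta:G2}.

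For case (3), $R$ of type $B_n$ or $G_2$ with arbitrary $k$, the limit argument recalled in Section \ref{s:nonsimplylaced} applies: each irreducible tempered module in the basis $\mathcal{B}(\overline{R}_\mathbb{Z}(\mathbb{H}_{k_0}))$ is the limit $\pi^{\mathcal{F}}_0$ of a continuous family $\mathcal{F}$ of discrete series for generic parameters, and $\pi^{\mathcal{F}}_0|_W\cong \pi^{\mathcal{F}}_t|_W$ for $t>0$ small. Thus $\mathcal{Y}$ is independent of the specialization, and case (2) gives $\mathcal{Y}=\overline{R}_\mathbb{Z}(W)$. The hardest step, and the main obstacle, is verifying the triangular integrality statement in case (1): it rests on the compatibility between the geometric realization of tempered $\mathbb{H}$-modules in \cite{KL,L2} and the Springer correspondence, which must be combined with Reeder's identification of the elliptic basis; all other verifications are explicit once that is in hand.
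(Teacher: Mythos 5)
Your plan for case (1) is essentially identical to the paper's proof: restrict the orthogonal basis $\C B(\overline R_\bZ(\bH))$ of Theorem \ref{t:Honbasis}(1) to $W$, and invoke the upper unitriangularity (with respect to the closure ordering of nilpotent orbits) of the matrix expressing restrictions of tempered modules in terms of Springer representations, as in \cite[section 4]{BM}. Case (3) also matches: the limit modules $\pi^{\C F}_0$ retain the $W$-restriction of the generic $\pi^{\C F}_t$, reducing (3) to (2).

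For case (2), however, your route is more laborious than necessary, and the extra work introduces a genuine soft spot. You correctly note that Theorem \ref{t:EPgeneric} provides an orthonormal basis of $\overline R_\bZ(\bH)$, and that its image under $\rr$ is an orthonormal set spanning $\overline R_\bC(W)$. But you then attempt to identify the restrictions explicitly (e.g.\ $\pi_\sigma|_W\cong(\sigma\times\emptyset)\otimes\sgn$ in type $B_n$, enumeration for $F_4,G_2$) and argue that these particular $W$-representations form a $\bZ$-basis. That identification is only established in one generic chamber ($k'/k>n-1$ for $B_n$), so you would need to carry it out chamber by chamber, and for $F_4$ the explicit $W$-structure of the generic discrete series is not worked out in the paper. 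The cleaner observation, which the paper uses and which makes all of this moot, is purely lattice-theoretic: the elliptic form is integral and positive definite on $\overline R_\bZ(W)$, so if $L'\subset L=\overline R_\bZ(W)$ is a full-rank sublattice spanned by an orthonormal set, then $1=\det(L')=[L:L']^2\det(L)$ forces $[L:L']=1$. Hence any orthonormal subset of cardinality $\dim\overline R_\bC(W)$ is automatically a $\bZ$-basis, and no explicit identification of the restrictions is needed. With that observation substituted for the explicit identifications, your argument for (2) becomes both correct and short, and matches the paper's.
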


\begin{proof}
(1) Assume without loss of generality that $k\equiv 1.$ Then consider
the restriction of the orthogonal basis $\C B(\overline R_\bZ(\bH))$
from Theorem \ref{t:Honbasis}
to $\overline R_\bZ(W).$ Then $\rr(\C B(\overline R_\bZ(\bH))$ is an
orthogonal set and an $\bR$-basis for $\overline R_\bZ(W).$ To see
that in fact it is a $\bZ$-basis, it is sufficient to recall that the
geometric realization of tempered modules of the Hecke algebra
(\cite{KL,L2}) implies that the matrix of restrictions to $W$ of the set of
irreducible tempered modules with real central character is upper
uni-triangular in the natural ordering given by the closure ordering
of nilpotent orbits, see \cite[section 4]{BM}.

In cases (2) and (3), the basis of $\overline
R_\bZ(\bH)$ constructed in Theorem \ref{t:Honbasis} is in fact an
orthonormal basis given by discrete series modules for generic parameters and
limits of discrete series modules for special parameters. Thus the restriction to $\overline
R_\bZ(W)$ forms an orthonormal $\bZ$-basis as well.

\end{proof}

Proposition \ref{p:integral}(1,2), together with Corollary
\ref{c:indmap}(2) and Theorem \ref{t:ccmap}, allow us to improve the result in Corollary \ref{c:indmap}(4).

\begin{corollary}\label{c:puresupport}
If $\delta\in \overline R_\bZ(W)$ is a rational multiple of a pure element in $\overline R_\bZ(W)$, then $\Ind_D(\delta)$ is supported at  a single central character.
\end{corollary}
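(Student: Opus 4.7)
The plan is to reduce the claim to Corollary \ref{c:indmap}(4) via the lattice equalities of Proposition \ref{p:integral}(1,2), then to propagate the resulting central-character coincidences to all parameters $k$ using the linearity of $\chi^{\wti\delta}$ in $\underline k$ from Theorem \ref{t:ccmap}(1). After rescaling by a rational, I may and do assume $\delta\in\overline R_\bZ(W)$ is itself pure.

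The first step is to unwind the assertion. Fixing a parameter $k$ and decomposing $\Ind_{D,k}(\delta)=\sum_\chi X_\chi$ by central character in $\overline R_\bC(\bH_k)$, Corollary \ref{c:indmap}(2) gives $\sum_\chi I(X_\chi)=I(\Ind_{D,k}(\delta))=i(\delta)$, while Corollary \ref{c:voganchar} forces every irreducible component $\wti\delta$ of $I(X_\chi)$ to satisfy $\chi^{\wti\delta}(k)=\chi$. So it suffices to show that, for every $k$, all irreducible components $\wti\delta$ appearing in the nonzero virtual character $i(\delta)$ share a common value of $\chi^{\wti\delta}(k)$. Fix two such components $\wti\delta_1,\wti\delta_2$. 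At any constant parameter $k_0$, Proposition \ref{p:integral}(1) gives $\C Y=\overline R_\bZ(W)$, so $\delta$ is pure in $\C Y$ and Corollary \ref{c:indmap}(4) yields $\chi^{\wti\delta_1}(k_0)=\chi^{\wti\delta_2}(k_0)$; when $R$ is not simply laced, Proposition \ref{p:integral}(2) extends the same identity to every generic $k_0$.

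The main hurdle is then propagating this identity from the test parameters to all $k$, and here the $W$-orbit structure of the central character is what requires care. Writing $\chi^{\wti\delta_i}=W\cdot\xi(\wti\delta_i)$ via Theorem \ref{t:ccmap}(1) for linear maps $\xi(\wti\delta_i):\Spec(A)\to V_\bC^\vee$, the equality locus of the $W$-orbits is the finite union $\bigcup_{w\in W}V_w$ of the linear subspaces $V_w=\{k:\xi(\wti\delta_1)(k)=w\cdot\xi(\wti\delta_2)(k)\}$; the difficulty is that distinct $w$'s might realize the orbit agreement at distinct parameters, blocking a naive polynomial-identity extension. In the simply-laced case this is easy to sidestep: $\Spec(A)$ is one-dimensional and $\xi(\wti\delta_i)(k)=k\cdot v_i$ for some $v_i\in V_\bC^\vee$, so the identity $W\cdot(k_0v_1)=W\cdot(k_0v_2)$ at any nonzero $k_0$ yields $v_1=w\cdot v_2$ for some $w\in W$, which scales to all $k$. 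In the non-simply-laced case, $\bigcup_w V_w$ contains the Zariski-open generic region; since a finite union of proper linear subspaces of $\Spec(A)$ cannot contain an open dense subset, one of the subspaces $V_w$ must equal all of $\Spec(A)$. Either way, $\xi(\wti\delta_1)=w\cdot\xi(\wti\delta_2)$ identically for a fixed $w\in W$, giving $\chi^{\wti\delta_1}(k)=\chi^{\wti\delta_2}(k)$ for every $k$ and completing the proof.
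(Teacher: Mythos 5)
Your proposal is correct and follows essentially the same route as the paper: reduce to the non-simply-laced, non-generic case via Proposition \ref{p:integral}(1,2) and Corollary \ref{c:indmap}(4), invoke the (poly)linear dependence of $\chi^{\wti\delta}$ on $k$, and propagate the coincidence of central characters from the dense set of generic parameters to all parameters. Where you add value is the last step: the paper simply asserts that the $\chi^{\wti\delta}$ ``are equal to each other as functions of $k$,'' glossing over the fact that these are $W$-orbit-valued, so pointwise agreement on a dense set does not immediately force a single $w\in W$ to intertwine the linear maps $\xi(\wti\delta_i)$ globally. Your observation that the agreement locus is a finite union $\bigcup_{w\in W} V_w$ of linear subspaces of $\Spec(A)$, which therefore cannot contain a Zariski-dense open set unless one $V_w$ is everything, supplies exactly the justification the paper leaves implicit.
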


\begin{proof}We only need to prove the claim when $R$ is non-simply
  laced and the parameter function $k$ is specialized to a non-generic
  point $k_0$, since otherwise by Proposition \ref{p:integral}(1,2)
  the claim is equivalent with Corollary \ref{c:indmap}(4). Assume this is
  the case and that $X$ is an irreducible $\bH$-module such that
  $\langle\Ind_D(\delta),X\rangle^\EP_\bH\neq 0.$ By Theorem
  \ref{t:ind}(2), this means that $\langle i(\delta),I(X)\rangle_{\wti
  W'}\neq 0,$ and therefore, by Corollary \ref{c:voganchar} the
central character of $X$ equals $\chi^{\wti\delta}$ for some
irreducible component $\wti\delta$ of $i(\delta).$ 

By Theorem
\ref{t:vogan}, we know that $\chi^{\wti\delta}$ depends polynomially
on the parameter $k$. 
Moreover, at generic $k$, $\delta$ is a pure element in $\C Y$ (since
$\C Y=\overline R_\bZ(W)$) and so, by Corollary \ref{c:indmap}(4),
$\chi^{\wti\delta}$ are equal to each other as functions of $k$ for all constituents
$\wti\delta$ of $i(\delta).$  (Here, we implicitly use that Corollary
\ref{c:indmap}(2) gives
$I(\Ind_D(\delta))=i(\delta)$, independently of $k$.) But then the $\chi^{\wti\delta}$ are
equal to each other at $k_0$ as well.

\end{proof}

We end the section with a calculation of $\Ind_D(\delta)$, for the
basis elements $\delta\in \overline R_\bZ(W)$ which are pure, but no
units. These are the cases appearing in $E_7$ and $D_{2n}$ in Theorem \ref{t:Honbasis}.

\begin{example}\label{ex:E7.2} Retain the notation from Example
  \ref{ex:E7.1}. Let
  $\delta=\pi_{A_4+A_1,\triv}|_W$. Then $$\Ind_D(\delta)=2\Phi([\bH\otimes_W(\phi_{512,12}-\phi_{512,11})]).$$
  We remark that $\phi_{512,12}$ and $\phi_{512,11}$ are the
  irreducible Springer representations attached to $(A_4+A_1,\triv)$
  and $(A_4+A_1,\sgn)$, respectively.
\end{example}

\begin{proof}
From Example \ref{ex:E7.1}, we see that $\wti\delta^+=64_s+64_{ss},$
and thus $\Ind_D(\delta)=\Phi([\bH\otimes_W(64_s+64_{ss})\otimes
(S^+-S^-)])=2\Phi([\bH\otimes_W(\phi_{512,12}-\phi_{512,11})]),$ by (\ref{e:tensE7}).
\end{proof}

\begin{example}\label{ex:D2n.2} Retain the notation from Example
  \ref{ex:D2n.1}. Let $\delta=\pi_{(1133),\triv}|_W$ in $\overline R_\bZ(W(D_4)).$ Then 
$$\Ind_D(\delta)=2\Phi([\bH\otimes_W (11\times 2+22\times 0-12\times 1)]),$$
where the notation for irreducible $W(D_4)$-representations is as in
\cite{Ca}. We remark that $12\times 1$ and $11\times 2$  are the irreducible Springer
representations attached to $((1133),\triv)$ and $((1133),\sgn)$, respectively.
\end{example}

\begin{proof} From Example \ref{ex:D2n.1}, we see that 
$$(22\times 0)\otimes S=\wti\sigma_1+\wti\sigma_2,\text{ as }\wti
W(D_4)\text{-representations},$$
where $S$ is the unique spin module for $D_4$ (of dimension $4$), and 
$\wti\sigma_1,\wti\sigma_2$ are two sign self-dual irreducible $\wti
W(D_4)$-representations of dimension $4$. Then
$\wti\sigma_i=\wti\sigma_i^++\wti\sigma_i^-$ as $\wti
W(D_4)'$-representations, and the index of $\pi_{(1133),\triv}$ is the
virtual $\wti W(D_4)'$-representation:
\begin{equation}\label{e:indexD4}
\begin{aligned}
i(\delta)=I(\pi_{(1133),\triv})&=(\wti\sigma_1^+
+\wti\sigma_2^+)-(\wti\sigma_1^- +\wti\sigma_2^-),\text{ while}\\
I(\pi_{(1133),\sgn})&=(\wti\sigma_1^-
+\wti\sigma_2^-)-(\wti\sigma_1^+ +\wti\sigma_2^+).\\
\end{aligned}
\end{equation}
Set $\wti\delta^+=\wti\sigma_1^+ +\wti\sigma_2^+$ and similarly define
$\wti\delta^-$. By definition,
$$\Ind_D(\delta)=\Phi(\bH\otimes_{W(D_4)'} (\wti\delta^+)^*\otimes
(S^+-S^-))=\Phi(\bH\otimes_{W(D_4)}\otimes \Ind_{W(D_4)'}^{W(D_4)} (\wti\delta^-\otimes
(S^+-S^-))  ),$$ where $S^\pm$ are the two irreducible two-dimensional
components of $S|_{\wti W(D_4)'}.$ We need to compute
$\wti\sigma_i^-\otimes S^\pm$; in fact it is sufficient to compute
their induction to $W(D_4).$ Notice that all $\wti\sigma_i^-\otimes
S^\pm$ occur as components of the restriction to $W(D_4)'$ of
$$(22\times 0)\otimes S\otimes S=(22\times 0)\otimes \wedge^\bullet
V=2(12\times 1+11\times 2+22\times 0).$$
Each  of $12\times 1,$ $11\times 2$, $22\times 0$ is sign self-dual,
hence they break into a sum of two irreducible equidimensional
$W(D_4)'$-representations of dimensions $4,$ $3,$ and $1$,
respectively. This means that every
$\Ind_{W(D_4)'}^{W(D_4)}(\wti\sigma_i^\pm\otimes S^\pm)$ equals either
\begin{equation}\label{e:condition}
12\times 1,\text{ or } 11\times 2+22\times 0.
\end{equation}
Using \cite[Lemma 3.8]{CT} and (\ref{e:indexD4}), we see that
$\wti\sigma_1^-+\wti\sigma_2^-$ is contained in  $(12\times
1)\otimes S^+$ and in $(11\times 2)\otimes S^+.$ Combining this with
(\ref{e:condition}), it follows that
$$\Ind_{W(D_4)'}^{W(D_4)}(\wti\sigma_i^-\otimes S^-)=12\times 1 \text{
  and } \Ind_{W(D_4)'}^{W(D_4)}(\wti\sigma_i^-\otimes S^+)=11\times
2+22\times 0,\quad i=1,2,$$
and this proves the claim.

\end{proof}

\section{Dirac induction (analytic version)}\label{s:5}

\subsection{An analytic model}\label{s:5.1}
Let $C^\omega(V)$ be the set of complex analytic functions on
$V.$ Let $M$ be a finite-dimensional (unitary) $W$-module. Following
\cite{HO,EOS} we define a left module action of
$\bH$ on $C^\omega(V)\otimes_\bC M$. Moreover, we define an inner
product on $C^\omega(V)\otimes M$ which makes this (actually the
subset of ``finite'' vectors) into a unitary $\bH$-module, with
respect to the natural $*$-operation on $\bH$ from section \ref{s:3.2}.

We begin by defining certain operators on $C^\omega(V)\otimes M.$

\begin{definition}For every $v\in V$, $f\in C^\omega(V)$, let
  $\partial_v$ denote the directional derivative of $f$ in the
  direction of $v$. Let $Q(v): C^\omega(V)\otimes M\to
  C^\omega(V)\otimes M$ be the operator 
\begin{equation}\label{Qv}
Q(v)(f\otimes
  m)=\partial_v f\otimes m.
\end{equation}
The Weyl group $W$ acts naturally on $C^\omega(V)$ via the left regular
action $(wf)(\xi)=f(w^{-1}\xi).$ For every root $\al\in R$, define the
integral operator $I(\al):C^\omega(V)\to C^\omega(V),$
\begin{equation}\label{Ial}
I(\al)f(\xi)=\int_0^{(\xi,\al^\vee)} f(\xi-t\al)~dt,\quad
\xi\in V.
\end{equation}
For every $\al\in F$, let $Q(s_\al):C^\omega(V)\otimes M\to
C^\omega(V)\otimes M$ be the operator
\begin{equation}\label{Qsal}
Q(s_\al)(f\otimes m)=s_\al f\otimes s_\al m+k_\al I(\al)f\otimes m.
\end{equation}
\end{definition}

\begin{theorem}[{\cite[Theorem 4.11]{EOS}\footnote{In fact, \cite{EOS}
    defines an action of the trigonometric Cherednik algebra  at critical level (which 
    contains $\bH$ as a subalgebra).}}]\label{t:ir} The assignment $v\to Q(v)$,
  $s_\al\to Q(s_\al)$ extends to an action (the ``integral-reflection''
  representation) of $\bH$ on $C^\omega(V)\otimes M.$
\end{theorem}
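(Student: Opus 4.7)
My approach would be to verify directly that the operators $Q(v)$ and $Q(s_\al)$ satisfy the defining relations of $\bH$ from Definition \ref{d:graded}, namely (a) commutativity among the $Q(v)$, so that the assignment extends to $S(V_\bC)$, (b) the Coxeter relations of $W$ for the $Q(s_\al)$, and (c) the cross-relation
\begin{equation*}
Q(s_\al) Q(v) - Q(s_\al(v)) Q(s_\al) = k_\al (v, \al^\vee)\cdot \mathrm{id},
\end{equation*}
which is the specialization of $s_\al p - s_\al(p) s_\al = k_\al \Delta_\al(p)$ to $p = v \in V$, since $\Delta_\al(v) = (v,\al^\vee)$.

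Relation (a) is immediate from the commutativity of directional derivatives. For (c), I would apply both sides to $f \otimes m$. The identity $s_\al(\partial_v f) = \partial_{s_\al(v)}(s_\al f)$ shows that the contributions to $s_\al m$ on both sides agree, so the problem reduces to the scalar identity
\begin{equation*}
I(\al)(\partial_v f) - \partial_{s_\al(v)}(I(\al) f) = (v,\al^\vee) f.
\end{equation*}
To check this, I would differentiate $I(\al) f(\xi + t s_\al(v))$ at $t = 0$: the moving-upper-limit contribution is $(s_\al(v),\al^\vee) f(s_\al \xi) = -(v,\al^\vee) f(s_\al \xi)$ (using $(s_\al(v),\al^\vee) = -(v,\al^\vee)$ and $\xi - (\xi,\al^\vee)\al = s_\al\xi$), and the remaining integrand is $\partial_{s_\al(v)} f(\xi - u\al)$. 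Combining with $\partial_v - \partial_{s_\al(v)} = (v,\al^\vee)\partial_\al$ turns the integrand into a total $u$-derivative, and the fundamental theorem of calculus yields $(v,\al^\vee)(f(\xi) - f(s_\al\xi))$; adding the boundary term gives exactly $(v,\al^\vee) f(\xi)$.

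Relation (b) is the main obstacle. The quadratic relation $Q(s_\al)^2 = \mathrm{id}$ unfolds into two auxiliary identities: $I(\al)(s_\al f) = -s_\al(I(\al) f)$, which follows from a change of variable $t \mapsto -t$ in \eqref{Ial} together with $(s_\al\xi,\al^\vee) = -(\xi,\al^\vee)$; and $I(\al)^2 = 0$, which I would verify by a Fubini argument: swapping the order of integration on the triangular region $\{(t,u) : 0 \le t \le (\xi,\al^\vee),\ t \le u \le (\xi,\al^\vee) - t\}$ splits it into two pieces related by $t \leftrightarrow (\xi,\al^\vee) - t$ which contribute with opposite signs. The braid relations $(Q(s_\al)Q(s_\beta))^{m(\al,\beta)} = \mathrm{id}$ are the delicate part; the cleanest route is to reduce to the rank-two parabolic subalgebra generated by $s_\al,s_\beta$ and check the relations there, or more efficiently to invoke \cite[Theorem 4.11]{EOS}, which builds an action by the same formulas for the larger trigonometric Cherednik algebra (at critical level) containing $\bH$, so the relations on the subalgebra $\bH$ hold a fortiori.
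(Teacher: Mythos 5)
Your approach is correct but genuinely different from the paper's. The paper does not verify the defining relations at all: it starts from the induced module $\bH\otimes_{\bC[W]} M^*$ with the left-regular $\bH$-action (automatically a module), traces that action through the vector-space identification $\bH\otimes_{\bC[W]}M^*\cong S(V_\bC)\otimes M^*$, and then dualizes via the anti-automorphism $\star$ ($w^\star=w^{-1}$, $\xi^\star=\xi$) to obtain an $\bH$-action on $(S(V_\bC)\otimes M^*)^*\supset C^\omega(V)\otimes M$. The formulas for $Q(v)$, $Q(s_\al)$ then fall out from the adjointness of $\Delta_\al$ and $I(\al)$ under the pairing $(p,f)=(p(\partial)f)(0)$. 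The virtue of that route is that existence of the action is immediate (no relations to check), at the cost of the bookkeeping with duals and the $\star$-twist. Your route is the dual one: direct verification on generators. Your checks of the commutativity of the $Q(v)$, of the quadratic relation via $I(\al)\circ s_\al=-s_\al\circ I(\al)$ and $I(\al)^2=0$ (the $t\mapsto(\xi,\al^\vee)-t$ antisymmetry argument is fine), and of the cross-relation (the boundary term $-(v,\al^\vee)f(s_\al\xi)$ plus the total-derivative cancellation from $\partial_v-\partial_{s_\al(v)}=(v,\al^\vee)\partial_\al$) are all correct. What remains unresolved in your sketch are the braid relations, which you punt to a rank-two reduction or to \cite{EOS}; that is legitimate (the rank-two reduction does work because $Q(s_\al)$ and $Q(s_\beta)$ only touch the $C^\omega(\bR\al+\bR\beta)$ tensor factor of $C^\omega(V)$), but note that in the paper's construction the braid relations never need to be verified at all, since the original module action satisfies them for free. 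That is the main thing the paper's approach buys you.
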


The explanation is as follows (\cite[section 4.3]{EOS}). Start with the induced module
$\bH\otimes_{\bC[W]} M^*$ with the action of $\bH$ by left
multiplication. As a $\bC$-vector space, $\bH\otimes_{\bC[W]} M^*$ is
isomorphic with $S(V_\bC)\otimes_\bC M^*.$ One traces the action under
this identification. For $\al\in F,$ $v\in V,$ $m'\in M^*$:
\begin{align*}
s_\al(p\otimes m')&=(s_\al\cdot p)\otimes m'=(s_\al(p)
s_\al+k_\al\Delta_\al(p))\otimes m'\\
&=s_\al(p)\otimes s_\al m'+k_\al\Delta_\al(p)\otimes m';\\
v(p\otimes m')&=vp\otimes m',
\end{align*}
where $\Delta_\al$ is the difference operator from (\ref{e:diffop}).

Now consider the dual $(S(V_\bC)\otimes M^*)^*\supset
C^\omega(V)\otimes M.$ The previous action of $\bH$ on
$S(V_\bC)\otimes M^*$ defines an action of $\bH$ on the dual
$(S(V_\bC)\otimes M^*)^*$ by means of the anti-automorphism $\star$
defined on the generators of $\bH$ by $w^\star=w^{-1}$ and
$\xi^\star=\xi.$
To get to the integral-reflection action $Q$ from Theorem \ref{t:ir},
one uses the fact that under the natural pairing
$$(~,~):S(V_\bC)\otimes C^\omega(V)\to \bC,\ (p,f)=(p(\partial)f)(0),$$
the operator $\Delta_\al$ is adjoint to the operator $I(\al).$

\subsection{Unitary structure} 
We generalize the inner product from \cite[(2.6)]{HO}. To begin,
for every Weyl chamber $C$, define an inner product $(~,~)_C$ on
$C^\omega(V)\otimes M$ by extending linearly
\begin{equation}
(\psi_1\otimes m_1,\psi_2\otimes m_2)_C=(\psi_1,\psi_2)_C (m_1,m_2)_M,
\quad \psi_1,\psi_2\in C^\omega(V), m_1,m_2\in M,
\end{equation}
where $(\psi_1,\psi_2)_C=\int_C\psi_1(\eta)\overline{\psi_2(\eta)}d\eta,$
and $(~,~)_M$ is a fixed $W$-invariant inner product on $M$.

Then, for every $f,g\in C^\omega(V)\otimes M,$ set 
\begin{equation}\label{e:unitstr}
(f,g)_{k,C}=\sum_{w\in W} (Q(w)f,Q(w)g)_C.
\end{equation}
The notation is meant to emphasize that this inner product depends on
the multiplicity function $k$ (since $Q$ does) and on the choice of
chamber $C$.
Let $C_+$ be the fundamental Weyl chamber (corresponding to $F$).

\begin{theorem}\label{t:*inv}
The inner product $(~,~)_{k,C_+}$ is $*$-invariant for $\bH.$
\end{theorem}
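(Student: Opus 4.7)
My plan is to reduce the verification of $*$-invariance to a generating set of $\bH$, and then treat the two resulting cases separately. Since $*$ is an anti-involution and $(~,~)_{k,C_+}$ is sesquilinear, it suffices to prove $(Q(h)f,g)_{k,C_+} = (f,Q(h^*)g)_{k,C_+}$ for $h$ in the generating set $W \cup V$ of $\bH$. The case $h = \sigma \in W$ is trivial: since $\sigma^* = \sigma^{-1}$ and $Q|_W$ is an ordinary group representation, the reindexing $w \mapsto w\sigma^{-1}$ yields
\begin{equation*}
(Q(\sigma)f, g)_{k,C_+} = \sum_{w \in W}(Q(w\sigma)f, Q(w)g)_{C_+} = \sum_{w' \in W}(Q(w')f, Q(w'\sigma^{-1})g)_{C_+} = (f, Q(\sigma^{-1})g)_{k,C_+}.
\end{equation*}

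The substantive case is $h = \xi \in V$, where
$\xi^* = -\xi + \sum_{\beta \in R^+} k_\beta (\xi,\beta^\vee) s_\beta.$
My approach is to compute $(Q(\xi)f,g)_{k,C_+}$ term-by-term in the sum over $w$, using the Bernstein--Lusztig commutation relations to move $Q(\xi)$ past $Q(w)$. Iterating the rank-one relation $s_\al \cdot \xi = s_\al(\xi)\cdot s_\al + k_\al(\xi,\al^\vee)$, one obtains a formula of the shape
\begin{equation*}
Q(w)Q(\xi) = Q(w(\xi))\,Q(w) + \sum_{\beta \in R^+\cap w R^-} k_\beta\,c_{w,\beta}(\xi)\,Q(s_\beta)Q(w'),
\end{equation*}
with shorter $w'$ and explicit scalars $c_{w,\beta}(\xi)$. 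Since $Q(w(\xi))$ acts as the directional derivative $\partial_{w(\xi)}$ on the $C^\omega(V)$-factor, integration by parts on $C_+$ transforms the principal term into the required $-\partial_\xi$ contribution on the right, at the cost of boundary integrals supported on the walls $\{(\eta,\al^\vee)=0\}$ of $C_+$, one for each simple $\al$. The plan is then to match these boundary contributions, together with the lower-order correction terms, to the reflection sum $\sum_{\beta>0} k_\beta(\xi,\beta^\vee)(f,Q(s_\beta)g)_{k,C_+}$. The central identity powering this reassembly is the same one that already forces $Q$ to define a representation of $\bH$ in Theorem~\ref{t:ir}, namely
\begin{equation*}
I(\al)\partial_\xi \;=\; \partial_{s_\al\xi}\,I(\al) \;+\; (\xi,\al^\vee)\,\mathrm{id},
\end{equation*}
which reconciles integration against the integral-reflection operator $I(\al)$ with ordinary differentiation.

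The main obstacle I expect is the combinatorial bookkeeping of how boundary and correction contributions across all chambers reorganize into a sum indexed by \emph{all} positive roots $\beta$ (not just the simple ones that index the walls of $C_+$). The $W$-invariance of the inner product $(~,~)_M$ on the $W$-module $M$ will ensure that the tensor factor $M$ plays only a passive role, entering through the substitutions $m \mapsto s_\al m$ built into the definition of $Q(s_\al)$; hence the $M$-valued case reduces cleanly to the scalar situation. In the scalar case with trivial $M$, the argument is essentially that of \cite[Theorem 2.6]{HO} (which handles the Heckman--Opdam inner product on the polynomial representation), and I will follow that template, adding only the observation that tensoring with $M$ and its $W$-invariant inner product introduces no new terms.
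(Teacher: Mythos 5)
Your plan follows the same route as the paper's proof, which is itself an adaptation of \cite[Theorem 2.4]{HO} (note the theorem number): reduce to generators, apply Stokes' theorem over $C_+$, pair boundary contributions on the walls $H_\al$ via $w\leftrightarrow s_\al w$, and absorb the $M$-factor by the $W$-invariance of $(~,~)_M$. The only cosmetic difference is how the wall-matching step is closed: the paper uses the pointwise observation that $I(\al)$ vanishes and $s_\al$ is the identity on $H_\al$ --- so that $Q(s_\al)$ acts on the wall simply by $s_\al$ in the $M$-factor --- whereas you invoke the operator identity $I(\al)\partial_\xi = \partial_{s_\al\xi}I(\al) + (\xi,\al^\vee)\,\mathrm{id}$; both are correct and lead to the same conclusion.
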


\begin{proof}
The proof is an adaptation of the proof of \cite[Theorem 2.4]{HO}.

The invariance with respect to $w$ is clear since the inner product
averages over $W$. Fix $\xi\in V.$ A formal argument, using only the
definition of $*$ and the relations in the Hecke algebra shows that
$$(\partial(\xi)f,g)_{k,C}-(f,\partial(\xi)^*g)_{k,C}=\sum_w(\partial(w\xi)Q(w)f,Q(w)g)_C+\sum_w(Q(w)f,\partial(w\xi)Q(w)g)_C.$$

Write $Q(w)f=\sum_j \phi_{w,j}\otimes m_{w,j}$ and
$Q(w)g=\sum_i\psi_{w,i}\otimes n_{w,i}$, where
$\phi_{w,j},\psi_{w,i}\in C^\omega(V)$ and $m_{w,j}, n_{w,i}\in M.$
Set $h_{w,i,j}(\eta)=\phi_{w,j}(\eta)\overline{\psi_{w,i}(\eta)}.$
Then:
\begin{align*}
(\partial(w\xi)Q(w)f,Q(w)g)_C&+(Q(w)f,\partial(w\xi)Q(w)g)_C\\
&=\sum_{i,j}\int_{C}(\partial(w\xi)\phi_{w,j}(\eta))\overline{\psi_{w,i}(\eta)}d\eta
(m_{w,j},n_{w,i})_M\\
&+\sum_{i,j}\phi_{w,j}(\eta)\overline{(\partial(w\xi)\psi_{w,i}(\eta))}d\eta
(m_{w,j},n_{w,i})_M\\
&=\sum_{i,j}\int_C\partial(w\xi)h_{w,i,j}(\eta)d\eta
(m_{w,j},n_{w,i})_M\\
&=\sum_{i,j}\int_{\partial C} h_{w,i,j}(\eta) (w\xi,\nu)d\sigma(\eta) (m_{w,j},n_{w,i})_M,
\end{align*}
by Stokes' theorem, where $\nu$ is the outer normal vector. 
Now assume $C=C_+$.
Since the
boundary of $C_+$ is formed of the intersections with the root
hyperplanes $H_\al$, $\al\in F$, one finds that this equals
\begin{align*}
\sum_{i,j}\sum_{\al\in F}\int_{H_{\al}\cap C_+}
h_{w,i,j}(\eta)(w\xi,\frac{\al^\vee}{|\al^\vee|})d\sigma_\al(\eta) (m_{w,j},n_{w,i}).
\end{align*}
Summing over $w$ and changing the order of summation, one gets:
\begin{align*}
\sum_{\al\in F}\int_{H_\al\cap C_+}(&\sum_{w^{-1}\al>0}\sum_{i,j}
h_{w,i,j}(\eta)(w\xi,\frac{\al^\vee}{|\al^\vee|})(m_{w,j},n_{w,i})+\\
&\sum_{w^{-1}\al<0}\sum_{i,j} h_{w,i,j}(\eta)(w\xi,\frac{\al^\vee}{|\al^\vee|}(m_{w,j},n_{w,i}))d\sigma_\al(\eta).
\end{align*}
We wish to show that this quantity is zero. We make the substitution
$w'=s_\al w$ in the second sum since then
$(w\xi,\frac{\al^\vee}{|\al^\vee|})=-(w'\xi,\frac{\al^\vee}{|\al^\vee|})$ and
$(w')^{-1}\al>0$ if $w^{-1}\al<0.$ But it remains to verify how
$h_{w,i,j}, m_{w,j}, n_{w,i}$ are related to $h_{w',i',j'}, m_{w',j'},
n_{w',i'}$ on the hyperplane $H_\al.$ For this, notice that
$I(\al)h=0$ and $s_\al\cdot h=h$, on $H_\al$ for all $h\in C^\omega(V).$ Therefore, on $H_\al$:
\begin{align}\label{e:crit}
Q(w')f=Q(s_\al)\sum_j\phi_{w,j}\otimes m_{w,j}=\sum_j
s_\al\phi_{w,j}\otimes s_\al m_{w,j}=\sum_j\phi_{w,j}\otimes s_\al m_{w,j},
\end{align}
and similarly for $Q(w')g.$ This implies that on $H_\al,$ we have
$h_{w',i,j}=h_{w,i,j}$, $m_{w',j}=s_\al m_{w,j},$ $n_{w',i}=s_\al
n_{w,i}$ (implicit here is that the sets of indices $(i,j)$ for $w$
and $w'$ are the same). Now the claim follows by the $W$-invariance of
the product $(~,~)_M.$
\end{proof}

\begin{remark}\label{r:negative}
A completely similar argument shows that the inner product $(~,~)_{k,C_-}$ is also invariant, where
$C_-=w_0(C_+)$ is the negative Weyl chamber. 
\end{remark}

Define 
\begin{equation}\label{e:Xmodel}
\C X_\omega(M)=\{f\in C^\omega(V)\otimes M:
(\partial(p)f,\partial(p)f)_{k,C_+}<\infty,\text{ for all }p\in S(V_\bC)\}.
\end{equation}
 Using the $W$-invariance of $(~,~)_{k,C_+}$, it is easy to see that the action of $\bH$ preserves $\C X_\omega(M).$ Thus we have:

\begin{corollary}The $\bH$-module $\C X_\omega(M)$ is $*$-pre-unitary.
\end{corollary}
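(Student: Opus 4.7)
The plan is to split the Corollary into two assertions: (i) $\C X_\omega(M)$ is stable under the action of $\bH$, which is the content of the remark immediately preceding the Corollary, and (ii) the restricted form $(\,,\,)_{k,C_+}$ on $\C X_\omega(M)$ is positive definite Hermitian and $\bH$ acts by $*$-adjoints. Assertion (ii) will follow almost immediately from Theorem~\ref{t:*inv} together with a short analyticity argument; the only substantive (but easy) task is (i).

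For (i), I would invoke the PBW decomposition $\bH \simeq S(V_\bC) \otimes_\bC \bC[W]$. Given $h \in \bH$ and $p \in S(V_\bC)$, rewrite the product $p \cdot h$ inside $\bH$ as a finite sum $\sum_i h_i \cdot p_i$ with $h_i \in \bC[W]$ and $p_i \in S(V_\bC)$. Applied to $f \in \C X_\omega(M)$ this gives
\[
\partial(p)\, Q(h) f \;=\; \sum_i Q(h_i)\, \partial(p_i) f,
\]
and each $\partial(p_i) f$ has finite norm by the defining condition on $\C X_\omega(M)$. It then suffices to check that each $Q(h_i)$, being a $\bC$-linear combination of operators $Q(w)$ for $w \in W$, preserves the subspace of finite-norm vectors. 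That is clear from the defining formula $(g_1, g_2)_{k,C_+} = \sum_{w' \in W}(Q(w') g_1, Q(w') g_2)_{C_+}$: re-indexing the sum by $w' \mapsto w' w$ and using that $Q|_W$ is an honest group action show that each $Q(w)$ is an isometry for $(\,,\,)_{k, C_+}$.

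For (ii), the $*$-invariance $(Q(h) f, g)_{k, C_+} = (f, Q(h^*) g)_{k, C_+}$ for $f, g \in \C X_\omega(M)$ and $h \in \bH$ is a direct consequence of Theorem~\ref{t:*inv}, which was proved on all of $C^\omega(V) \otimes M$ and therefore restricts to the stable subspace $\C X_\omega(M)$. Positive definiteness is then standard: the form is positive semi-definite (a $W$-sum of tensor products of $L^2$-pairings on $C_+$ with the positive form $(\,,\,)_M$), and if $(f,f)_{k, C_+} = 0$ then already the $w' = e$ term vanishes, forcing $f|_{C_+} \equiv 0$; real-analyticity of $f$ on $V$ then forces $f = 0$ everywhere.

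The only mildly subtle point—and the reason one cannot simply commute $\partial(p)$ past $Q(h)$ termwise—is that $Q(s_\al) = s_\al + k_\al I(\al)$ mixes the reflection action of $W$ with the integral operator $I(\al)$, so derivatives do not pass through naively. The PBW rearrangement above is precisely the clean device that trades such derivatives for operators in $\bC[W]$, where the $W$-invariance of $(\,,\,)_{k, C_+}$ takes over. I do not foresee any further obstacle.
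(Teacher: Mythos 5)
Your argument is correct and is a faithful expansion of what the paper leaves implicit: the paper's entire justification is the remark preceding the corollary ("Using the $W$-invariance of $(~,~)_{k,C_+}$, it is easy to see that the action of $\bH$ preserves $\C X_\omega(M)$"), and your PBW rearrangement $p\cdot h=\sum_i h_i p_i$ together with the observation that each $Q(w)$ is a $(\,,\,)_{k,C_+}$-isometry is precisely the mechanism behind that remark, while $*$-invariance is Theorem~\ref{t:*inv} restricted to the stable subspace and positivity follows from the $w'=e$ term plus real-analyticity exactly as you say.
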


\subsection{Global Dirac operators} We wish to define Dirac operators on
the spaces $\C X_\omega(M)$.
 For this we need to trace again through the action of $\bH$ and
the chain of identifications after Theorems \ref{t:ir}.

Let $S$ be a  spin module for the Clifford algebra $C(V)$ 
and let $E$ be a genuine $\wti W$-module. Then $E\otimes S$ is a $W$-representation, so we have the $\bH$-module $\C X_\omega(E\otimes S).$

If $\CB$
is an orthonormal basis of $V$, define $D\in
\End_\bH(\bH\otimes_{W}(E\otimes S))$ via
\begin{equation}
D(h\otimes x\otimes y)=\sum_{\xi\in \CB}h\wti\xi\otimes x\otimes
\xi y,\quad h\in \bH, x\in E, y\in S.
\end{equation}
The definition does not depend on the choice of basis $\CB,$ and
moreover $D$ is well-defined:
\begin{align*}
D(hw\otimes &w^{-1}(x\otimes y))=\sum_{\xi\in\CB}hw\wti\xi\otimes
{\wti w}^{-1} x\otimes \xi{\wti w}^{-1} y\quad\text{(for some pullback
  $\wti w$ of $w$ in $\wti W$)}\\
&=\sum_{\xi\in \CB}h\wti{w(\xi)}w\otimes {\wti w}^{-1}x\otimes {\wti
  w}^{-1} w(\xi) y   \quad\text{ (where } \wti{w(\xi)}=w\cdot\wti\xi
\cdot w^{-1}\text{)}\\
&=\sum_{\xi\in \CB} h\wti{w(\xi)}w\otimes w^{-1}(x\otimes w(\xi)y)=\sum_{\xi\in w(\CB)} h\wti\xi\otimes (x\otimes \xi y)=D(h\otimes
x\otimes y),
\end{align*}
since $w(\CB)$ is also an orthonormal basis of $V$.

Clearly $D$ commutes with the module action of $\bH$, since the $\bH$-action is by left multiplication.

In the identification $\bH\otimes_{W}(E\otimes S)\cong S(V_\bC)\otimes (E\otimes S)$, $D$ acts as
follows:
\begin{align*}
D(p\otimes (x\otimes y)&=\sum_{\xi\in\CB}p\wti \xi\otimes (x\otimes
\xi y)=\sum_{\xi\in\CB} (p\xi\otimes (x\otimes \xi y)-p T_\xi\otimes
(x\otimes\xi y)\\
&=\sum_{\xi\in\CB} \xi p\otimes (x\otimes\xi
y)-\sum_{\xi\in\CB}p\otimes T_\xi(x\otimes \xi y),
\end{align*}
where $T_\xi$ were defined in (\ref{e:Txi}).
The formula for $D^2$ can be computed analogously with the one in the local case (\ref{e:Dsquare}).

\begin{proposition}\label{p:globalDsquare} As operators on $\bH\otimes_W(E\otimes S)$, we have
\begin{equation}\label{e:globalDsquare}
D^2=-\Omega\otimes 1\otimes 1+1\otimes \Omega_{\wti W}\otimes 1.
\end{equation}
\end{proposition}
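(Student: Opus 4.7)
The plan is to reduce the claim to a computation on generators of the form $1 \otimes x \otimes y$ and then invoke the local Dirac-square identity $\C D^2 = -\Omega \otimes 1 + \rho(\Omega_{\wti W})$ from equation~\eqref{e:Dsquare}. Since $D$ commutes with the left $\bH$-action on $\bH \otimes_W (E \otimes S)$ (noted in the excerpt just before the proposition), $D^2$ also commutes with the left $\bH$-action, so it suffices to verify the formula on the subspace $1 \otimes (E \otimes S)$, which generates the induced module under left multiplication.

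First, I would compute $D^2(1 \otimes x \otimes y)$ directly. Applying $D$ twice yields
$$D^2(1 \otimes x \otimes y) = \sum_{i,j} \wti\xi_i \wti\xi_j \otimes x \otimes \xi^j \xi^i y,$$
with the Clifford factors in the order $\xi^j \xi^i$ (reverse of $\C D^2$), since the second application of $D$ multiplies $\xi^j$ on the left of $\xi^i y$. Next I would use Clifford anticommutation $\xi^j\xi^i = -\xi^i\xi^j - 2\langle\xi^i,\xi^j\rangle$, together with the dual-basis identity $\sum_j \langle\xi^i,\xi^j\rangle \xi_j = \xi^i$, to reorganize this sum so that it matches the element $\C D^2 \in \bH \otimes C(V)$ acting via right multiplication on the $\bH$-factor and Clifford multiplication on $S$, modulo a correction involving the commutators $[\wti\xi_i,\wti\xi_j]$.

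I would then substitute the local formula $\C D^2 = -\Omega \otimes 1 + \rho(\Omega_{\wti W})$ and interpret each term on $\bH \otimes_W(E \otimes S)$. The centrality of $\Omega \in Z(\bH)$ immediately gives the contribution $-\Omega \otimes 1 \otimes 1$. For the term $\rho(\Omega_{\wti W}) = \sum_{\wti w} c_{\wti w}\,p(\wti w) \otimes \wti w$, I would use the balancing relation $w \otimes v = 1 \otimes w \cdot v$ of the tensor product over $W$ to transfer the Weyl-group component $p(\wti w)$ from the $\bH$-factor to act diagonally on $E \otimes S$ via any lift in $\wti W$. Combined with the Clifford action of $\wti w \in \Pin(V) \subset C(V)$ on $S$—which coincides with the $\wti W$-representation of $\wti w$ on $S$—together with the commutator corrections from the Clifford reordering, the net action on $S$ should reduce to the identity, leaving only the action of $\Omega_{\wti W}$ on the $E$-factor.

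The main obstacle is precisely this final cancellation: making rigorous how the two copies of the $\wti W$-action on $S$ (one from $\rho$ via Clifford multiplication, the other from the transfer through $\otimes_W$), together with the correction terms from reordering Clifford factors, combine to produce the trivial action on $S$. This relies on the fundamental Pin identity $\wti w\, v = p(\wti w)(v)\,\wti w$ in $C(V)$ for $v \in V$ with $\wti w \in \Spin(V)$, and on the specific structure of $\Omega_{\wti W}$ as a sum $\tfrac{z}{4}\sum k_\al k_\beta |\al^\vee||\beta^\vee|\,\wti s_\al \wti s_\beta$ over pairs of positive roots with $s_\al(\beta) < 0$. Carrying this through pair by pair (with care for even/odd Pin parity and the $\wti s_\al^{\,2} = z$ relation) should yield the desired expression $-\Omega \otimes 1 \otimes 1 + 1 \otimes \Omega_{\wti W} \otimes 1$.
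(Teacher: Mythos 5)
Your plan is essentially sound and is a genuine reorganization of the paper's argument, though in the end it requires the same computations; let me compare. The paper expands $D^2(p\otimes x\otimes y)=\sum_{i,j}p\wti\xi_i\wti\xi_j\otimes x\otimes\xi_j\xi_iy$ and substitutes the two \emph{explicit} formulas from \cite{BCT} (Theorem~2.11 for $\sum_i\wti\xi_i^2$ and Lemma~2.9 for $[\wti\xi_i,\wti\xi_j]$), then transports the resulting $s_\al s_\beta$ across $\otimes_W$ and simplifies the Clifford factors. You instead observe that, after Clifford anticommutation, $\sum_{i,j}\wti\xi_i\wti\xi_j\otimes\xi_j\xi_i=\C D^2-\sum_{i\neq j}[\wti\xi_i,\wti\xi_j]\otimes\xi_i\xi_j$, so you can invoke the packaged local identity $\C D^2=-\Omega\otimes1+\rho(\Omega_{\wti W})$ for the first piece. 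This buys a cleaner statement of the $-\Omega\otimes1\otimes1$ contribution, but note you still need the explicit commutator formula $[\wti\xi_i,\wti\xi_j]=\tfrac14\sum_{(\al,\beta)\in R^2_\circ}k_\al k_\beta((\al^\vee,\xi_j)(\beta^\vee,\xi_i)-(\beta^\vee,\xi_j)(\al^\vee,\xi_i))s_\al s_\beta$ for the correction — so the two BCT ingredients do not actually decouple, and the real work (the final Clifford cancellation) is identical to the paper's.

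Where your write-up stops short is exactly the cancellation you flag as the ``main obstacle,'' and it is worth recording that it does close. After transporting the Weyl-group factors across $\otimes_W$, the term $\rho(\Omega_{\wti W})$ alone does \emph{not} act trivially on $S$: its contribution to the $S$-slot is $(\wti s_\al\wti s_\beta)^2y$, which is not $y$ for $\al\neq\beta$. The correct bookkeeping is to first combine, over the off-diagonal pairs $R^2_\circ$, the Clifford factors coming from $\rho(\Omega_{\wti W})$ and from the commutator correction: $|\al^\vee||\beta^\vee|\,\wti s_\al\wti s_\beta+(\beta^\vee\al^\vee-\al^\vee\beta^\vee)=\al^\vee\beta^\vee+\beta^\vee\al^\vee-\al^\vee\beta^\vee=\beta^\vee\al^\vee$, and only then move $s_\al s_\beta$ across $\otimes_W$. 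The transferred $\wti s_\al\wti s_\beta$ acting on $S$ then hits $\beta^\vee\al^\vee$ and collapses to a scalar: $\wti s_\al\wti s_\beta\,\beta^\vee\al^\vee=\frac{\al^\vee(\beta^\vee)^2\al^\vee}{|\al^\vee||\beta^\vee|}=|\al^\vee||\beta^\vee|$, leaving $-\tfrac14 k_\al k_\beta|\al^\vee||\beta^\vee|\,p\otimes\wti s_\al\wti s_\beta x\otimes y$; together with the diagonal terms (where $(\wti s_\al)^2=z$ acts by $-1$ on $E$ and $+1$ on $E\otimes S$) this reassembles into $p\otimes(\Omega_{\wti W}x)\otimes y$. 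If you instead try to close the $S$-slot \emph{before} combining with the commutator correction (say by pushing through the anti-automorphism $w\mapsto w^{-1}$, $\xi\mapsto\xi$ of $\bH$ to view $\sum\wti\xi_i\wti\xi_j\otimes\xi_j\xi_i$ as the transport of $\C D^2$), the $S$-slot does cancel via $\wti s_\beta\wti s_\al\wti s_\al\wti s_\beta=1$, but you are then left with $\wti s_\beta\wti s_\al$ on $E$ rather than $\wti s_\al\wti s_\beta$, and you would additionally need the (true, but not obvious) fact that $\Omega_{\wti W}$ is fixed by the inversion anti-automorphism of $\bC[\wti W]$ restricted to genuine representations. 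Carrying out the cancellation in the order above avoids that extra step and matches the paper's computation term for term.
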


\begin{proof}
The proof is a completely analogous calculation to that in the proof of \cite[Theorem 3.5]{BCT}. For simplicity, denote
\begin{equation}
R^2_{\circ}=\{(\al,\beta)\in R\times R: \al,\beta>0, \al\neq \beta, s_\al(\beta)<0\}.
\end{equation}
Let $\{\xi_i\}$ be an orthonormal basis of $V$ and let $p\otimes (x\otimes y)$ be an element of $\bH\otimes (E\otimes S).$ Then we have
\begin{align*}
D^2(p\otimes (x\otimes y))&=\sum_{i,j}p \wti\xi_i\wti\xi_j\otimes (x\otimes\xi_j\xi_i y)\\
&=\sum_i p \wti\xi_i^2\otimes x\otimes \xi_i^2y+\sum_{i<j} p[\wti\xi_i,\wti\xi_j]\otimes (x\otimes \xi_j\xi_i y)
\end{align*}
Now, we use the identities
\begin{equation}
\sum_i\wti\xi_i^2=\Omega-\frac 14\sum_{\al>0}\langle\al^\vee,\al^\vee\rangle-\frac 14\sum_{(\al,\beta)\in R^2_0}k_\al k_\beta\langle\al^\vee,\beta^\vee\rangle s_\al s_\beta,\ \text{(see \cite[Theorem 2.11]{BCT})}
\end{equation}
and 
\begin{equation}
[\wti\xi_i,\wti\xi_j]=[T_{\xi_j},T_{\xi_i}]=\frac 14\sum_{(\al,\beta)\in R^2_{\circ}}k_\al k_\beta((\al^\vee,\xi_j)(\beta^\vee,\xi_i)-(\beta^\vee,\xi_j)(\al^\vee,\xi_i))s_\al s_\beta, 
\end{equation}
(see for example \cite[Lemma 2.9]{BCT}). It follows that:
\begin{align*}
D^2&(p\otimes(x\otimes y))=-\Omega p\otimes (x\otimes y)+\frac 14\sum_{\al>0}\langle\al^\vee,\al^\vee\rangle p\otimes (x\otimes y)\\
&+\frac 14\sum_{(\al,\beta)\in R^2_0}k_\al k_\beta\langle\al^\vee,\beta^\vee\rangle p\otimes s_\al s_\beta(x\otimes y)\\
&+\frac 14\sum_{i<j} \sum_{(\al,\beta)\in R^2_{\circ}}k_\al k_\beta p\otimes ((\al^\vee,\xi_j)(\beta^\vee,\xi_i)-(\beta^\vee,\xi_j)(\al^\vee,\xi_i))s_\al s_\beta (x\otimes \xi_j\xi_i y)\\
&=-\Omega p\otimes (x\otimes y)+\frac 14\sum_{\al>0}\langle\al^\vee,\al^\vee\rangle+\frac 14\sum_{(\al,\beta)\in R^2_0}k_\al k_\beta\langle\al^\vee,\beta^\vee\rangle p\otimes s_\al s_\beta(x\otimes y)\\
&-\frac 14\sum_{i\neq j} \sum_{(\al,\beta)\in R^2_{\circ}}k_\al k_\beta p\otimes (\beta^\vee,\xi_j)(\al^\vee,\xi_i)s_\al s_\beta (x\otimes \xi_j\xi_i y),
\end{align*}
where we used again that $\xi_i\xi_j=-\xi_j\xi_i$ for $i\neq j.$ Now changing the order of summation, the last double sum becomes
\begin{equation*}
\frac 14\sum_{(\al,\beta)\in R^2_\circ} k_\al k_\beta p\otimes s_\al s_\beta (x\otimes (\beta^\vee\al^\vee+\langle\al^\vee,\beta^\vee\rangle )y),
\end{equation*} 
where we identify $\al^\vee,\beta^\vee$, via the inner product $\langle~,~\rangle,$ as elements of $C(V)$. Finally, making the necessary cancellations, we arrive at the desired formula.

\end{proof}

Dualizing and making the identifications, we arrive at the following
definition.

\begin{definition}\label{d:globaldirac}
Let $E$ be a genuine $\wti W$-module. The Dirac operator
$D_E\in\End_{\bH}(C^\omega(V)\otimes (E\otimes S))$ is given by
\begin{equation}
D_E(f\otimes x\otimes y)=\sum_{\xi\in\CB}\partial(\xi)f\otimes
x\otimes\xi y-\sum_{\xi\in\CB} f\otimes T_\xi(x\otimes \xi y),
\end{equation}
where $T_\xi=\frac 12\sum_{\beta\in R^+} k_\beta
(\xi,\beta^\vee) s_\beta.$
Clearly, $D_E$ preserves the finite vectors in $C^\omega(V)\otimes (E\otimes S)$, and thus it defines an operator $D_E\in \End_{\bH}(\C X_\omega(E\otimes S)).$
\end{definition}

\begin{example} We consider the Hecke algebra $\bH$ for $sl(2)$. Here
  $V=\bR\al$, $W=\bZ/2\bZ$, and $\bH$ is generated by $s$ and
  $\xi\in V$ subject to
$$s\cdot\xi+\xi\cdot s=\langle\xi,\al^\vee\rangle.$$
We assume that the inner product on $V$ is normalized so that the
length of $\al^\vee$ is $\sqrt 2.$ 

If we make the identification $\xi\to \xi\al,$ $\xi\in \bR$, we
may regard the functions $f$ as $f:\bR\to \bR$, and the action of the
Hecke algebra on $C^\omega(\bR)\otimes M$ is:
\begin{align*}
&Q(s)(f(\xi)\otimes m)=f(-\xi)\otimes (s\cdot m)+(\int_{-\xi}^\xi f(t)
dt)\otimes m;\\
&Q(\xi)f=\frac {df}{d\xi}.
\end{align*}

The cover $\wti W$ is isomorphic to $\bZ/4\bZ$ and there are two
genuine $\wti W$-types (both spin modules), $\chi_+$ and $\chi_-$ given
by multiplication by $i$ and $-i$ respectively. Fix $S=\chi_+$ and
take the basis $\CB=\{\frac 1{\sqrt 2}\al^\vee\}$. 

\begin{enumerate}
\item $E=\chi_+$. Note that $E\otimes S$ is the $\sgn$
  $W$-representation. We have $D_+(f\otimes x\otimes y)=i(\frac
  {df}{d\xi}\otimes x\otimes y+f\otimes x\otimes y).$ Then $\ker
  D_+=\bR e^{-\xi}\otimes\sgn$, and one checks that this is the
  Steinberg module. It is unitary with respect to the inner product $(~,~)_{C_+}$.
\item $E=\chi_-$. Note that $E\otimes S$ is the $\triv$
  $W$-representation. We have $D_-(f\otimes x\otimes y)=i(\frac
  {df}{d\xi}\otimes x\otimes y-f\otimes x\otimes y).$ Then $\ker
  D_-=\bR e^{\xi}\otimes\triv$, and one checks that this is the
 trivial module. It is unitary with respect to the inner product $(~,~)_{C_-}.$
\end{enumerate}
\end{example}

\begin{definition}
Assume that $\dim V$ is odd, and let $S^+,S^-$ be the two spin modules
of $C(V).$ Define $D^\pm_E: \C X_\omega(E\otimes S^\pm)\to \C
X_\omega(E\otimes S^\mp)$ by composing the Dirac operators
$D^\pm_E\in\End_\bH(\C X(E\otimes S^\pm))$ from Definition
\ref{d:globaldirac} with the vector space isomorphism $S^+\to S^-$, as
in the local case in section \ref{s:3.4}. 
\end{definition}

Now assume that $\dim V$ is even. Let $S^+, S^-$ be the two spin modules of $C(V)_\even$ and let $E$ be a genuine representation of $\wti W'.$ Then $E\otimes S^\pm$ are $W'$-representations, so we can consider the $\bH$-modules 
\begin{equation}
\C X'_\omega(E\otimes S^\pm):=\C X_\omega(\bC[W]\otimes_{W'}S^\pm).
\end{equation}
 Define Dirac operators $$D^\pm: \bH\otimes_{W'}(E\otimes S^\pm)\to \bH\otimes_{W'}(E\otimes S^\mp)$$ by
\begin{equation*}
D^\pm (h\otimes x\otimes y)=\sum_{\xi\in \CB}h\wti\xi\otimes x\otimes
\xi y,\quad h\in \bH, x\in E, y\in S.
\end{equation*}
Again, $D^\pm$ are  well-defined, independent of the choice of basis $\CB,$ and commute with the left action of $\bH$. The formula for $D^\pm D^\mp$ is the same as in the formula (\ref{e:globalDsquare}):
\begin{equation}\label{e:globalDsquare2}
D^\pm D^\mp=-\Omega\otimes 1\otimes 1+1\otimes \Omega_{\wti W}\otimes 1,
\end{equation}
 because in that calculation, the elements of $W$ that occur are actually in $W'$, so they can still be moved across the tensor product.

 In the identification 
\begin{equation}
\bH\otimes_{W'}(E\otimes S^\pm)\cong \bH\otimes_W(\bC[W]\otimes_{W'}(E\otimes S^\pm))\cong S(V_\bC)\otimes (\bC[W]\otimes_{W'}(E\otimes S^\pm)),
\end{equation}
 the action of $D^\pm$ is as follows:
\begin{equation*}
\begin{aligned}
D^\pm(p\otimes(w\otimes(x\otimes y)))&=\sum_{\xi\in\CB} \xi p\otimes (w\otimes (x\otimes\xi
y))-\sum_{\xi\in\CB}p\otimes (T_\xi w\otimes (x\otimes \xi y)).
\end{aligned}
\end{equation*}
Dualizing, we obtain the following definition.

\begin{definition} Assume that $\dim V$ is even. Let $E$ be a genuine $\wti W'$-module. The Dirac operators $D^\pm_E: \C X_\omega'(E\otimes S^\pm)\to \C X_\omega'(E\otimes S^\mp)$ are given by
\begin{equation}
\begin{aligned}
D^\pm_E(f\otimes (w\otimes(x\otimes y)))&=\sum_{\xi\in\CB} \partial(\xi)f\otimes (w\otimes (x\otimes \xi y))-\sum_{\xi\in\CB} f\otimes (T_\xi w\otimes (x\otimes \xi y)).
\end{aligned}
\end{equation}
\end{definition}

\subsection{Global Dirac index}
With these definitions at hand, we can define the global Dirac index. For uniformity of notation, set
$\C X'_{\omega}(M)=\C X_\omega(M)$, when $\dim V$ is odd as well, for every $W'=W$-module $M$. For every genuine $\wti W'$-representation $E$, we have defined the Dirac operators
\begin{equation}
D^\pm_E: \C X'_\omega(E\otimes S^\pm)\to \C X'_\omega(E\otimes S^\mp),
\end{equation}
which commute with the action of $\bH$.
As in the case of local Dirac operators, it is easy to see that $D^+_E$ is adjoint to $D^-_E$ with respect to the unitary structure of $ \C X'_\omega(E\otimes S^\pm)$.

If $\lambda\in W\backslash V^\vee_\bC$, define 
\begin{equation}
\C X'_\omega(M)_\lambda=\{f\in \C X'_\omega(M): \partial(p)f=p(\lambda)f,\text{ for all }p\in S(V_\bC)^W\}.
\end{equation}
A classical result of Steinberg implies that $\C X'_\omega(M)_\lambda$ is finite-dimensional. Since $D_E^\pm$ commute with the $\bH$-action, we have the restricted Dirac operators
\begin{equation}
D_E^\pm(\lambda):\C X'_\omega(E\otimes S^\pm)_\lambda\to \C X'_\omega(E\otimes S^\mp)_\lambda,
\end{equation}
and the restricted Dirac index
\begin{equation}\label{e:restrindex}
I_E(\lambda)=\ker D_E^+(\lambda)-\ker D_E^-(\lambda),
\end{equation}
a virtual $\bH$-module. We will see that $I_E(\lambda)=0$ except for finitely many values of $\lambda$, depending on $E$. We have the following easy restriction.

\begin{lemma}\label{l:nonzeroker} Assume $\wti\delta$ is an irreducible $\wti W'$-representation. Then
$\ker D_{\wti \delta}^\pm(\lambda)\neq 0$ only if $\langle\lambda,\lambda\rangle=\langle\chi^{\wti\delta},\chi^{\wti\delta}\rangle$, where $\chi^{\wti\delta}$ is the central character attached to $\wti\delta$ by Definition \ref{d:voganchar}.
\end{lemma}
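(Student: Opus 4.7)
The plan is to show that $D^{\mp}_{\wti\delta}\,D^{\pm}_{\wti\delta}$ acts on the central character subspace $\C X'_\omega(\wti\delta\otimes S^\pm)_\lambda$ as multiplication by the scalar $\langle\chi^{\wti\delta},\chi^{\wti\delta}\rangle-\langle\lambda,\lambda\rangle$. Since any element of $\ker D^{\pm}_{\wti\delta}(\lambda)$ automatically lies in the kernel of this composition, the scalar must vanish whenever $\ker D^{\pm}_{\wti\delta}(\lambda)\neq 0$, which is the desired identity of norms.

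The main input is the square formula (\ref{e:globalDsquare2}), which I would first check transports to the analytic model: repeating the calculation of Proposition \ref{p:globalDsquare} with $\partial(\xi)$ in place of left multiplication by $\xi$ (as in Definition \ref{d:globaldirac}), and using Theorem 2.11 and Lemma 2.9 of \cite{BCT} as in the local case, gives
\[
D^{\mp}_{\wti\delta}\, D^{\pm}_{\wti\delta} \;=\; -\Omega\otimes 1\otimes 1 \;+\; 1\otimes \Omega_{\wti W}\otimes 1
\]
as endomorphisms of $\C X'_\omega(\wti\delta\otimes S^\pm)$, with $\Omega\in Z(\bH)$ acting through the integral-reflection representation and $\Omega_{\wti W}$ acting on the middle $\wti\delta$-factor. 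On $\C X'_\omega(\wti\delta\otimes S^\pm)_\lambda$, the element $\Omega=\sum_i\xi_i\xi^i$ acts by $\lambda(\Omega)=\langle\lambda,\lambda\rangle$ through its image under the central character. On the other hand, $\Omega_{\wti W}$ lies in $Z(\bC[\wti W])$, hence in $Z(\bC[\wti W'])$, so by Schur's lemma it acts on the irreducible $\wti W'$-module $\wti\delta$ by the scalar $\wti\delta(\Omega_{\wti W})$; combining the identity $\zeta(\Omega)=\Omega_{\wti W}$ from (\ref{e:zetaomega}) with Definition \ref{d:voganchar} identifies this scalar as $\chi^{\wti\delta}(\Omega)=\langle\chi^{\wti\delta},\chi^{\wti\delta}\rangle$. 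Adding the two contributions yields the claimed expression for the composition.

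The main subtlety, and the hardest step, is confirming that in the analytic picture $\Omega_{\wti W}$ acts only on the $\wti\delta$-factor and not diagonally on the full tensor product $\wti\delta\otimes S^\pm$: these two actions would in general produce different scalars, since $\wti\delta\otimes S^\pm$ need not be $\wti W'$-irreducible. This is exactly the observation made in the paper immediately after (\ref{e:globalDsquare2}), that the reflections $s_\al s_\beta$ appearing in $\Omega_{\wti W}$ all belong to $W'$ and can therefore be moved across the tensor $\otimes_{W'}$; dually, in $\C X'_\omega(\wti\delta\otimes S^\pm)=\C X_\omega(\bC[W]\otimes_{W'}(\wti\delta\otimes S^\pm))$ they act on the $\wti\delta$-factor only. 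Both signs of $\pm$ are treated uniformly by the argument above.
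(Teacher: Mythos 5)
Your proposal is correct and follows the same route as the paper: apply the square formula $D^{\mp}_E D^{\pm}_E = -\Omega\otimes 1\otimes 1 + 1\otimes\Omega_{\wti W}\otimes 1$ on the $\lambda$-eigenspace, read off that $\Omega$ contributes $\langle\lambda,\lambda\rangle$ and $\Omega_{\wti W}$ contributes $\wti\delta(\Omega_{\wti W})=\chi^{\wti\delta}(\Omega)=\langle\chi^{\wti\delta},\chi^{\wti\delta}\rangle$ via $\zeta(\Omega)=\Omega_{\wti W}$ and Definition~\ref{d:voganchar}, and conclude that a nonzero kernel forces the scalar to vanish. You spell out the dualization to the analytic model and the observation that $\Omega_{\wti W}$ lands only on the $\wti\delta$-factor (because the $s_\al s_\beta$ lie in $W'$) in more detail than the paper does, but the argument is the same.
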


\begin{proof}
Assume that $\ker D_{\wti\delta}^+(\lambda)\neq 0.$ Then $\ker D_{\wti\delta}^-D_{\wti\delta}^+(\lambda)\neq 0$, and by (\ref{e:globalDsquare}) and (\ref{e:globalDsquare2}), we get that $\langle\lambda,\lambda\rangle=\wti\delta(\Omega_{\wti W})=\langle\chi^{\wti\delta},\chi^{\wti\delta}\rangle.$
\end{proof}

In the next subsection, we show that in fact, under the assumptions of Lemma \ref{l:nonzeroker}, $\lambda=\chi^{\wti\delta}$, as a consequence of  Theorem  \ref{t:vogan}.


\begin{lemma}\label{l:analyticindex}
For every $\lambda\in W\backslash V^\vee_\bC,$ $I_E(\lambda)=\C X'_\omega(E\otimes S^+)_\lambda-\C X'_\omega(E\otimes S^-)_\lambda.$
\end{lemma}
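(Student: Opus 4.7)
The plan is to view the identity as an Euler characteristic statement for the two-term complex
\[
\mathcal X'_\omega(E\otimes S^+)_\lambda\xrightarrow{D_E^+(\lambda)}\mathcal X'_\omega(E\otimes S^-)_\lambda\xrightarrow{D_E^-(\lambda)}\mathcal X'_\omega(E\otimes S^+)_\lambda,
\]
whose two terms are finite-dimensional $\bH$-modules (Steinberg's theorem, as already invoked in the text). Since both maps commute with the $\bH$-action, their kernels and images are $\bH$-submodules, and the claim reduces to the standard fact that for a complex of finite-dimensional modules the alternating sum of the spaces equals the alternating sum of the kernels (cohomology).

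Concretely, the first step is to note that $D_E^+(\lambda)$ and $D_E^-(\lambda)$ are mutually adjoint with respect to the pre-unitary structure on $\mathcal X'_\omega(E\otimes S^\pm)$ coming from Theorem~\ref{t:*inv}. This is the global analogue of (\ref{Dadj}) and follows from the fact that $\mathcal D^*=-\mathcal D$ holds in $\bH\otimes C(V)$ under the natural $*$-operations (each $\wti\xi^*=-\wti\xi$ and $\xi^*=-\xi$ in $C(V)$), together with the $*$-invariance of the inner product on $\mathcal X'_\omega$ and the fact that the spin modules $S^\pm$ carry Hermitian structures making the Clifford action of $V$ skew-adjoint. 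Consequently, on the finite-dimensional space $\mathcal X'_\omega(E\otimes S^\pm)_\lambda$ we have the orthogonal decompositions
\begin{equation*}
\mathcal X'_\omega(E\otimes S^+)_\lambda=\ker D_E^+(\lambda)\oplus \im D_E^-(\lambda),\qquad \mathcal X'_\omega(E\otimes S^-)_\lambda=\ker D_E^-(\lambda)\oplus \im D_E^+(\lambda),
\end{equation*}
and these decompositions are $\bH$-stable because $D_E^\pm$ are $\bH$-equivariant.

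The second step is to observe that $D_E^+(\lambda)$ induces an $\bH$-module isomorphism from $\im D_E^-(\lambda)=(\ker D_E^+(\lambda))^\perp$ onto $\im D_E^+(\lambda)$: it is $\bH$-equivariant by construction, surjective by definition of the image, and injective since its kernel on $\im D_E^-(\lambda)$ is $\ker D_E^+(\lambda)\cap\im D_E^-(\lambda)=0$. Therefore, in the Grothendieck group of $\bH$-modules,
\begin{equation*}
[\mathcal X'_\omega(E\otimes S^+)_\lambda]-[\ker D_E^+(\lambda)]=[\im D_E^-(\lambda)]=[\im D_E^+(\lambda)]=[\mathcal X'_\omega(E\otimes S^-)_\lambda]-[\ker D_E^-(\lambda)],
\end{equation*}
which rearranges to the desired identity $I_E(\lambda)=[\mathcal X'_\omega(E\otimes S^+)_\lambda]-[\mathcal X'_\omega(E\otimes S^-)_\lambda]$.

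The only genuine content beyond linear algebra is the adjointness of $D_E^+$ and $D_E^-$; this is the step I would expect to require the most care, since one has to verify that the formal adjoint computed from $\mathcal D^*=-\mathcal D$ in $\bH\otimes C(V)$ really matches the analytic adjoint with respect to $(\,,\,)_{k,C_+}$ after passing through the dualization used to define $D_E^\pm$ on $\mathcal X'_\omega$. Everything else (finite-dimensionality of the generalized eigenspace, $\bH$-equivariance, and the Euler characteristic argument) is formal.
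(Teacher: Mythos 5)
Your argument is correct and is essentially the paper's own proof: the adjointness of $D_E^+(\lambda)$ and $D_E^-(\lambda)$ gives the orthogonal decompositions $\C X'_\omega(E\otimes S^\pm)_\lambda=\ker D_E^\pm(\lambda)\oplus\im D_E^\mp(\lambda)$, and then $D_E^\pm(\lambda)$ restricted to $\im D_E^\mp(\lambda)$ furnishes the $\bH$-module isomorphism $\im D_E^-(\lambda)\cong\im D_E^+(\lambda)$ that yields the identity in the Grothendieck group. One sign to correct in your justification of the adjointness: $\C D^*=\C D$ rather than $-\C D$, since each tensor factor of $\C D=\sum_i\wti\xi_i\otimes\xi^i$ contributes a minus ($\wti\xi^*=-\wti\xi$ in $\bH$ and $\xi^t=-\xi$ in $C(V)$), so $D_E^+$ is adjoint to $D_E^-$ with no extra sign; this is immaterial for the orthogonal decomposition but worth getting right.
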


\begin{proof}
We have $(x,D_E(\lambda)^-y)_{k,C_+}=(D^+_E(\lambda)x,y)_{k,C_+}=0,$ whenever $x\in\ker D^+_E(\lambda),$ which shows that $\ker D^+_E(\lambda)\subset (\im D^-_E(\lambda))^\perp.$ Conversely,  if $x\in (\im D^-_E(\lambda))^\perp\subset \C X'_\omega(E\otimes S^+)_\lambda,$ then $0=(x,D^-_E(\lambda)y)_{k,C_+}=(D^+_E(\lambda)x,y)_{k,C_+},$ for all $y\in \C X'_\omega(E\otimes S^-)_\lambda.$ Specializing to $y=D^+_E(\lambda)x$, we see that $D^+_E(\lambda)x=0,$ hence $x\in\ker D^+_E(\lambda).$ Therefore, we proved 
\begin{align*}
&\C X'_\omega(E\otimes S^+)_\lambda=\ker D^+_E(\lambda)\oplus \im D^-_E(\lambda),\text{ and similarly, } \\
&\C X'_\omega(E\otimes S^-)_\lambda=\ker D^-_E(\lambda)\oplus \im D^+_E(\lambda).
\end{align*}
Then $D^+_E(\lambda)$ maps $\im D^-_E(\lambda)$ onto $\im D^+_E(\lambda)$ and $D^-_E(\lambda)$ maps $\im D^-_E(\lambda)$ onto $\im D^+_E(\lambda)$, and they induce an isomorphism $\im D^+_E(\lambda)\cong \im D^-_E(\lambda)$ as $\wti W'$-representations. The claim follows.

\end{proof}

\begin{definition}
\label{d:globalindex}
For a given genuine $\wti W'$-representation $E$, we define the global Dirac index to be
\begin{equation}\label{eq:sumindex}
I_E=\bigoplus_{\lambda} I_E(\lambda),
\end{equation}
where $I_E(\lambda)$ is as in (\ref{e:restrindex}). In fact, we conjecture that
$\ker D_E^+$ and $\ker D_E^-$ are finite-dimensional, and as a result,
that $I_E=\ker D_E^+-\ker D_E^-.$
\end{definition}

\subsection{A realization of discrete series modules}\label{s:ds}

\begin{lemma}\label{l:ds1}
Let $(\pi,X)$ be an irreducible $\bH$-module and $M$ a finite-dimensional $W'$-module such that $\Hom_\bH(X,\C X'_\omega(M))\neq 0.$ Then $(\pi,X)$ is a discrete series $\bH$-module.
\end{lemma}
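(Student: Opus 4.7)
The plan is to use the analytic realization of $\C X'_\omega(M)$ to read off the exponential decay of $S(V_\bC)$-weight vectors of $X$ on the fundamental chamber $C_+$, and match this to the strict Casselman criterion defining discrete series modules.

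First I would observe that since $X$ is irreducible, any nonzero $\bH$-homomorphism $X \to \C X'_\omega(M)$ is injective, so I identify $X$ with its image. The irreducible module $X$ carries a central character $\chi^\lambda$ for some $\lambda \in V^\vee_\bC$, and under this embedding the action of $p \in S(V_\bC) \subset \bH$ is by the differential operator $\partial(p)$ (extending $Q(v) = \partial_v$ from (\ref{Qv})). In particular, for $p \in S(V_\bC)^W = Z(\bH)$ one has $\partial(p) f = p(\lambda) f$ for every $f \in X$, so $X \subset \C X'_\omega(M)_\lambda$. This latter space is finite-dimensional by Steinberg's classical freeness theorem, and hence so is $X$.

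Next I would decompose $X$ into generalized $S(V_\bC)$-weight spaces $X = \bigoplus_\nu X_\nu$. For any such $\nu$ and any nonzero $f \in X_\nu$, finite-dimensionality of $X$ provides a uniform $N$ with $(\partial_v - \nu(v))^N f = 0$ for all $v \in V$. Setting $g(\xi) = e^{-\nu(\xi)} f(\xi)$ gives $\partial_v^N g = 0$ for all $v$; since the linear span of $\{v^N : v \in V\}$ is all of $S^N(V_\bC)$, every Taylor coefficient of $g$ of degree $\geq N$ vanishes, and by analyticity $g$ is a polynomial. Hence
\[
f(\xi) = e^{\nu(\xi)} q(\xi), \qquad \xi \in V,
\]
for some nonzero polynomial map $q \colon V \to M$.

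Finally, $f \in \C X'_\omega(M)$ forces $(f,f)_{k, C_+} < \infty$; since each summand in
\[
(f,f)_{k,C_+} = \sum_{w \in W}(Q(w)f, Q(w)f)_{C_+}
\]
is nonnegative, the $w = e$ term alone yields
\[
\int_{C_+} e^{2 \Re \nu(\xi)} \, (q(\xi), q(\xi))_M \, d\xi < \infty.
\]
As $(q(\xi), q(\xi))_M$ grows only polynomially on $C_+$, this forces $\Re \nu(\xi) < 0$ strictly on the interior of $C_+$, equivalently $(\omega, \Re \nu) < 0$ for every fundamental weight $\omega \in \C P$. Since this holds for every $S(V_\bC)$-weight $\nu$ of $X$, the strict Casselman criterion (\ref{bHtemp}) is satisfied, and $X$ is a discrete series $\bH$-module. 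There is no substantive obstacle here; the argument is a direct translation between the abstract weight criterion and the $L^2$-decay on $C_+$ imposed by membership in the analytic model.
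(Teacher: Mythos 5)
The approach is essentially the paper's: identify $X$ with its image under a nonzero (hence injective) $\bH$-homomorphism into $\C X'_\omega(M)$, observe that $S(V_\bC)$-eigenvectors map to exponential-type functions, and use the finiteness of the inner product $(\cdot,\cdot)_{k,C_+}$ to extract strict exponential decay on $C_+$. You are somewhat more elaborate than the paper in working with \emph{generalized} $S(V_\bC)$-weight vectors; the paper simply picks an honest weight vector $x$ (one exists in each generalized weight space, and genuine weight vectors already exhaust the set of weights $\nu$), concludes $\kappa(x)= c\,e^{(\cdot,\nu)}\otimes m$, and is done. Your version picks up a polynomial factor $q$, which is fine but introduces the subtlety below.

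There is one imprecise step. The conditions ``$\Re\nu(\xi)<0$ on the interior of $C_+$'' and ``$(\omega,\Re\nu)<0$ for every fundamental weight $\omega$'' are \emph{not} equivalent: the fundamental weights lie on the extremal boundary rays of $\ovl{C_+}$, so the former condition allows $(\omega_{\al_0},\Re\nu)=0$ for some $\al_0\in F$, while it is the latter, stronger condition that defines discrete series. As written, your tube argument only establishes the weaker interior statement. The fix is immediate: run the same estimate on a thin \emph{one-sided} tube inside $C_+$ along the extremal ray $\bR_{>0}\omega_{\al_0}$, i.e.\ $T=\{t\omega_{\al_0}+\sum_{\al\neq\al_0}s_\al\omega_\al : t\ge 1,\ 0<s_\al<\delta\}$. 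If $(\omega_{\al_0},\Re\nu)\geq 0$ then $e^{2\Re\nu}$ is bounded below on $T$ by $e^{-2C\delta}$, while $\int_T |q|^2\,d\xi$ diverges since the inner slice integral is a nonnegative, not-identically-zero polynomial in $t$; hence $(f,f)_{k,C_+}=\infty$, contradiction. With this adjustment your argument delivers exactly the strict Casselman criterion, as in the paper, where with a pure exponential the equivalence of $L^2$-integrability on $C_+$ with $(\omega_\al,\Re\nu)<0$ for all $\al\in F$ is transparent.
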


\begin{proof}
Let $x\in X$ be an $S(V_\bC)$-weight vector in $X$ with weight $\nu\in V^\vee.$ This means that for all $\xi\in V_\bC$, $\pi(\xi)x=(\xi,\nu)x.$ Let $\kappa\in \Hom_\bH(X,\C X'_\omega(M))$ be nonzero. Then $$\partial(\xi)\kappa(x)=\kappa(\pi(\xi)x)=(\xi,\nu)x.$$
Recall that in $ \C X'_\omega(M)$, $\partial(\xi)$ acts by differentiation only on the function part of the tensor product, so we may assume without loss of generality, that $\kappa(x)=f\otimes m,$ for some $f\in C^\omega(V)$, $m\in M$. This means that the one-dimensional vector space generated by $f$ is invariant under differentiation by all elements $p\in S(V_\bC)$, and therefore $f$ is an exponential function $f(v)=ce^{(v,\nu)}$, $v\in V.$ 

But since $f\otimes m\in   \C X'_\omega(M)$, $f$ must be an $L^2$-function on $C_+$, and therefore the weight $\nu$ must satisfy the strict Casselman criterion (see \ref{bHtemp}).
\end{proof}

\begin{lemma}\label{l:ds2}
Suppose that $(\pi,X)$ is an irreducible discrete series $\bH$-module and $M$ a finite-dimensional $W'$-module. Then 
\begin{equation}
\Hom_\bH(X,\C X'_\omega(M))\cong \Hom_{W'}(M^*,X^*).
\end{equation}
\end{lemma}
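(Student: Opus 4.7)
The plan is to combine Frobenius reciprocity, duality for finite-dimensional modules, and the discrete series hypothesis. From section \ref{s:5.1}, $\C X'_\omega(M)=\C X_\omega(\bC[W]\otimes_{W'}M)$ is a submodule of $(\bH\otimes_W(\bC[W]\otimes_{W'}M^*))^*\cong(\bH\otimes_{W'}M^*)^*$, where the right-hand side carries the $\bH$-action through the anti-involution $\star$ (the integral-reflection action when restricted to the analytic model). Frobenius reciprocity together with the duality $\Hom_\bH(X,Y^*)\cong\Hom_\bH(Y,X^*)$, valid since $\dim X<\infty$, yields
\begin{equation*}
\Hom_\bH(X,(\bH\otimes_{W'}M^*)^*)\cong\Hom_\bH(\bH\otimes_{W'}M^*,X^*)\cong\Hom_{W'}(M^*,X^*).
\end{equation*}
Composing with the inclusion $\C X'_\omega(M)\hookrightarrow(\bH\otimes_{W'}M^*)^*$ gives an injection $\Hom_\bH(X,\C X'_\omega(M))\hookrightarrow\Hom_{W'}(M^*,X^*)$; the content of the lemma is that this is surjective.

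To prove surjectivity, I would take $\phi\in\Hom_{W'}(M^*,X^*)$ and form the $\bH$-homomorphism $\Phi\colon\bH\otimes_{W'}M^*\to X^*$, $\Phi(h\otimes m^*)=h\cdot\phi(m^*)$. Its $\star$-dual $\Phi^*\colon X\to(\bH\otimes_{W'}M^*)^*$ is characterized, for $p\in S(V_\bC)$, by $\Phi^*(x)(p\otimes m^*)=\phi(m^*)(\pi(p)x)$. Decomposing $X=\bigoplus_\nu X_\nu$ into generalized $S(V_\bC)$-weight spaces, one has $\pi(p)|_{X_\nu}=p(\nu)\,\mathrm{id}$ plus a nilpotent perturbation, so under the identification $(\bH\otimes_{W'}M^*)^*\supset C^\omega(V)\otimes(\bC[W]\otimes_{W'}M)$ the image $\Phi^*(x)$ for $x\in X_\nu$ corresponds to $Q_x(v)e^{(v,\nu)}$, where $Q_x\colon V\to \bC[W]\otimes_{W'}M$ is a polynomial whose degree is bounded by the size of the largest Jordan block of $\pi|_{X_\nu}$. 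This is the direct extension of the weight-vector computation in the proof of Lemma \ref{l:ds1} to the entirety of $X$.

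The discrete series hypothesis now closes the argument. Every generalized $S(V_\bC)$-weight $\nu$ of $X$ satisfies the strict Casselman inequalities $(\omega,\Re\nu)<0$ for all fundamental weights $\omega$, so $e^{(v,\nu)}$ decays exponentially on the fundamental chamber $C_+$, dominating the polynomial growth of $Q_x$. Hence $Q_x(v)e^{(v,\nu)}\in L^2(C_+)\otimes(\bC[W]\otimes_{W'}M)$, and since the derivative of a polynomial times an exponential is again of that form, the $L^2$-condition persists after applying any $\partial(p)$. This gives $\Phi^*(x)\in\C X'_\omega(M)$ for all $x\in X$, establishing surjectivity and therefore the isomorphism.

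The main obstacle is the generalized weight-space step: one must control the polynomial $Q_x$ arising from the Jordan decomposition of $S(V_\bC)$ on $X_\nu$, verify that the correspondence between elements of $(\bH\otimes_{W'}M^*)^*$ and generalized $\pi$-weight vectors really identifies $\Phi^*(x)$ with a polynomial-exponential of the form above, and check that all $S(V_\bC)$-derivatives remain in the Schwartz-type space defined in (\ref{e:Xmodel}). Once this is in place, the rest of the proof is formal.
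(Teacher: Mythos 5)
Your argument is essentially the paper's: both reduce to showing that any $\bH$-morphism $X\to(\bH\otimes_{W'}M^*)^*$ actually lands in the analytic submodule $\C X'_\omega(M)$ via the strict Casselman decay, then invoke the tautological duality $\Hom(A,B^*)\cong\Hom(B,A^*)$ and Frobenius reciprocity. The one place you add value is that you spell out the generalized $S(V_\bC)$-weight-space case (polynomial-times-exponential growth from the Jordan blocks), which the paper's proof glosses over by citing the pure-weight-vector computation of Lemma~\ref{l:ds1}; this is a genuine gap in the paper's exposition that your argument correctly fills.
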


\begin{proof}
Since the dual of $S(V_\bC)$ can be identified with Laurent series on $V_\bC$, the proof of Lemma \ref{l:ds1} shows that every nontrivial homomorphism of $X$ into $(\bH\otimes_{W'}M^*)^*$, under the assumption that $X$ be a discrete series module, lands in fact in $\C X'_\omega(M).$ In other words, we have
$$\Hom_\bH(X,\C X'_\omega(M))\cong\Hom_\bH(X,(\bH\otimes_{W'} M^*)^*).$$
Furthermore, using the tautological isomorphism $\Hom(A,B^*)\cong \Hom(B,A^*),$ we find that
\begin{equation}
\begin{aligned}
\Hom_\bH(X,\C X'_\omega(M))&\cong\Hom_\bH(\bH\otimes_{W'}M^*,X^*)\\
&\cong \Hom_{W'}(M^*,X^*),
\end{aligned}
\end{equation}
where the last step is the Frobenius isomorphism.
\end{proof}

Recall that by Corollary \ref{c:EPds}, if $(\pi,X)$ is a discrete series module, then $\rr(X)$ is a unit element in $\overline R_\bZ(W)$, where $\rr$ is the restriction map from \ref{e:rmap}. Therefore, by Theorem \ref{t:CT}(2), there exist unique irreducible $\wti W'$-representations $\wti\delta^+_X,\wti\delta^-_X$ such that
\begin{equation}
I(X)=\wti\delta_X^+-\wti\delta_X^-,
\end{equation}
and moreover, by Corollary \ref{c:voganchar}, the central character of $X$ is $\chi^{\wti\delta_X^+}=\chi^{\wti\delta_X^-}$, where the notation is as in Definition \ref{d:voganchar}.

\begin{theorem}\label{t:globalindex}For any irreducible genuine $\wti W'$-module $E$, $I_E$ (as defined by (\ref{eq:sumindex})) is   
a virtual finite linear combination of discrete series representations.
Let $(\pi,X)$ be an irreducible discrete series module with central character
$\chi^{\wti \delta_X^+}$ and let $E$ be an irreducible $\wti W'$-representation. Then 
\begin{equation}\label{homeq}
 \Hom_\bH(X,I_E)\cong\Hom_\bH(X,I_E(\Lambda(\wti\delta_X))) \cong \Hom_{\wti W'}(E^*,I(X)),
\end{equation}
and the dimension of these spaces is $\begin{cases}1,&\text{ if }E^*\cong \wti\delta^+_X\\ -1, &\text{ if }E^*\cong\wti\delta^-_X\\ 0, &\text{ otherwise.}\end{cases}$.
In particular, $I_{\wti\delta^+_X}$ is the unique irreducible discrete series $\bH$-module $X$ that has Dirac index $\wti\delta^+_X-\wti\delta^-_X.$

\end{theorem}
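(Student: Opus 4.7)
The plan is to assemble the theorem from three local inputs already established in the excerpt: Lemma \ref{l:ds1} (irreducible $\bH$-submodules of $\C X'_\omega(M)$ are discrete series), Lemma \ref{l:ds2} (the Frobenius-type identity $\Hom_\bH(X,\C X'_\omega(M))\cong \Hom_{W'}(M^*,X^*)$ for $X$ discrete series), and Lemma \ref{l:analyticindex} (the virtual identity $I_E(\lambda)=\C X'_\omega(E\otimes S^+)_\lambda-\C X'_\omega(E\otimes S^-)_\lambda$). Since $D^\pm_E$ are $\bH$-equivariant, the argument in Lemma \ref{l:analyticindex} in fact yields a $\bH$-module identity in the Grothendieck group at each fixed $\lambda$, so the following computations may be carried out $\bH$-equivariantly.

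First I would establish finiteness of the sum (\ref{eq:sumindex}). Any irreducible $\bH$-subquotient of $\ker D^\pm_E(\lambda)$ sits inside $\C X'_\omega(E\otimes S^\pm)$ and so is a discrete series module by Lemma \ref{l:ds1}; by the compatibility of the embedding with $Z(\bH)$ its central character is $\lambda$. The set of central characters of discrete series modules is finite (cf.\ Theorem \ref{t:genericdiscrete} and its simply-laced analogue via \cite{KL}), and each weight space $\C X'_\omega(E\otimes S^\pm)_\lambda$ is finite-dimensional by Steinberg's theorem. This forces $I_E=\bigoplus_\lambda I_E(\lambda)$ to be a finite virtual combination of discrete series modules.

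Next I would compute, for an irreducible discrete series $X$ with central character $\chi^{\wti\delta_X^+}=\chi^{\wti\delta_X^-}$, the Hom space
\begin{equation*}
\Hom_\bH(X,I_E)=\Hom_\bH(X,I_E(\Lambda(\wti\delta_X)))
\end{equation*}
(the first equality by central character separation). Combining Lemma \ref{l:analyticindex} with Lemma \ref{l:ds2} applied to $M=E\otimes S^\pm$ yields
\begin{equation*}
\Hom_\bH(X,I_E)\cong \Hom_{\wti W'}(E^*,X^*\otimes S^+)-\Hom_{\wti W'}(E^*,X^*\otimes S^-),
\end{equation*}
after the standard adjunction $\Hom_{W'}(A\otimes B,C)\cong \Hom_{\wti W'}(A,C\otimes B^*)$ and identification $(S^\pm)^*\cong S^\mp$. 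By Lemma \ref{l:CT} applied to the (finite-dimensional) dual module $X^*$, the virtual $\wti W'$-character $X^*\otimes S^+-X^*\otimes S^-$ equals the Dirac index $I(X^*)$, which by Theorem \ref{t:CT}(2) (using that $\rr(X)$ is a unit vector by Corollary \ref{c:EPds}) decomposes as $(\wti\delta_X^+)^*-(\wti\delta_X^-)^*$ in an essentially unique way with orthogonal positive and negative parts. Thus $\dim\Hom_\bH(X,I_E)$ equals $\dim\Hom_{\wti W'}(E^*,(\wti\delta_X^+)^*)-\dim\Hom_{\wti W'}(E^*,(\wti\delta_X^-)^*)$, which by irreducibility of $E$, $\wti\delta_X^\pm$ is $+1$, $-1$, or $0$ according to whether $E^*\cong\wti\delta_X^+$, $E^*\cong\wti\delta_X^-$, or neither; this gives (\ref{homeq}).

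Finally, for the concluding statement, specialize to $E=\wti\delta_X^+$: the previous step shows $\dim\Hom_\bH(Y,I_{\wti\delta_X^+})=\langle \delta_{Y,X}\rangle$ vanishes on all irreducible discrete series $Y\not\cong X$, while taking value $1$ on $Y\cong X$; so as a virtual module $I_{\wti\delta_X^+}=X$. The uniqueness part---that $X$ is the only discrete series with Dirac index $\wti\delta_X^+-\wti\delta_X^-$---follows from the isometry in Corollary \ref{c:resmap} combined with Theorem \ref{t:CT}(2): the restriction $\rr(X)$ recovers $\delta$ with $i(\delta)=\wti\delta_X^+-\wti\delta_X^-$. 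The main obstacle in this plan is carrying out cleanly the dualities between $E$, $S^\pm$ and $X$ so that the $\bH$-equivariance of Lemma \ref{l:ds2} matches the $\wti W'$-equivariance of the local Dirac index identity from Lemma \ref{l:CT}; once the conventions are pinned down, every other step reduces to a finite dimensional bookkeeping.
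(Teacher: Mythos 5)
Your proposal follows essentially the same route as the paper: central character separation to reduce $I_E$ to $I_E(\Lambda(\wti\delta_X))$, Lemma~\ref{l:analyticindex} to replace the index by the difference of the two weight spaces, Lemma~\ref{l:ds2} to turn $\bH$-Hom's into $W'$-Hom's, and Lemma~\ref{l:CT} together with Theorem~\ref{t:CT}(2) to extract $\wti\delta_X^\pm$. Your finiteness argument (discrete series by Lemma~\ref{l:ds1}, pinned central characters, finite-dimensionality of each weight space) is a reasonable expansion of what the paper asserts without spelling out.

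However, there is a dual-bookkeeping slip in the middle which, as written, contradicts the formula you then state. You assert that $I(X^*)$ ``decomposes as $(\wti\delta_X^+)^*-(\wti\delta_X^-)^*$'' via Theorem~\ref{t:CT}(2). That is not what the theorem gives: $I(X^*) = i(\rr(X^*))$, and since restriction to $W$ commutes with dualization and $\rr(X)$ has a real-valued character (so $\rr(X^*)=\rr(X)$ in $R_\bC(W)$), one gets $I(X^*)=i(\rr(X))=\wti\delta_X^+-\wti\delta_X^-$ with no extra duals; this is precisely what the paper encodes by invoking $X^*\cong X$ as a $W'$-module. If one actually had $I(X^*)=(\wti\delta_X^+)^*-(\wti\delta_X^-)^*$, then $\dim\Hom_{\wti W'}(E^*,(\wti\delta_X^+)^*)=1$ would force $E\cong\wti\delta_X^+$, not $E^*\cong\wti\delta_X^+$ as you then claim; the two statements differ unless $\wti\delta_X^+$ is self-dual. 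So you have implicitly made a second compensating slip in reading off the Hom-space. The fix is simply to use $\rr(X^*)=\rr(X)$ (equivalently $X^*\cong X$ as $W'$-modules) before invoking Theorem~\ref{t:CT}(2), so that $I(X^*)=\wti\delta_X^+-\wti\delta_X^-$, from which $\dim\Hom_{\wti W'}(E^*,I(X^*))$ is $1$ iff $E^*\cong\wti\delta_X^+$ and $-1$ iff $E^*\cong\wti\delta_X^-$. Relatedly, the aside ``$(S^\pm)^*\cong S^\mp$'' is neither used nor universally true (it depends on $\dim V\bmod 4$); the adjunction $\Hom_{W'}(A\otimes B,C)\cong\Hom_{W'}(A,C\otimes B^*)$ already produces $X^*\otimes S^\pm$ via the natural isomorphism $B^{**}\cong B$, and the stray identification should be removed.
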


\begin{proof}
Since the central character of $X$ is $\Lambda(\wti\delta_X)$, we necessarily have $\Hom_\bH(X,I_E)\cong\Hom_\bH(X,I_E(\Lambda(\wti\delta_X))).$ By Lemma \ref{l:analyticindex}, this space is isomorphic with $$\Hom_\bH(X,\C X'_\omega(E\otimes S^+)-\C X'_\omega(E\otimes S^-)),$$
which by Lemma \ref{l:ds2} equals $$\Hom_{W'}(E^*\otimes (S^+-S^-)^*,X^*)=\Hom_{\wti W'}(E^*,X\otimes (S^+-S^-))=\Hom_{\wti W'}(E^*,I(X));$$
here, we used that 
$X^*\cong X$ as $W'$-modules, while the last equality is from Lemma \ref{l:CT}. Now (\ref{homeq}) follows from this using that $I(X)=\wti\delta_X^+-\wti\delta_X^-.$
 
To complete the proof, notice that Theorem \ref{t:CT}(2) combined with Corollary \ref{c:EPds} implies that if $X\not\cong Y$ are two distinct irreducible discrete series modules, then $\wti\delta_X^\pm\neq \wti\delta_Y^\pm,$ and therefore $X$ is the only discrete series that contributes to $I_{\wti\delta_X^+}.$

\end{proof}

\begin{remark}
If instead of the inner product $(~,~)_{k,C_+}$, one uses $(~,~)_{k,C_-}$ (see Remark \ref{r:negative}), then one can realize all irreducible anti-discrete series of $\bH$ (i.e., the Iwahori-Matsumoto duals of the discrete series) in the corresponding analytic models and indices of Dirac operators. All the results are the obvious analogues.
\end{remark}

\begin{remark}Most of the constructions here apply equally well to noncrystallographic root systems.  Theorem \ref{t:globalindex} shows that if $X$ is an irreducible discrete module such that $X|_W$ is a unit element in $\overline R_\bZ(W)$ with respect to the elliptic pairing, then $X$ is isomorphic to the global Dirac index for an irreducible $\wti W$-representation, and in particular, it is unitary. 
\end{remark}

\ifx\undefined\bysame
\newcommand{\bysame}{\leavevmode\hbox to3em{\hrulefill}\,}
\fi

\end{document}